\newcommand{\Av}{\operatorname{Av}}
\DeclareMathOperator{\DPT}{\mathsf{DPT}}
\DeclareMathOperator{\VHC}{\mathsf{VHC}}
\newcommand{\swu}{\operatorname{swu}}
\newcommand{\swd}{\operatorname{swd}}
\newcommand{\swl}{\operatorname{swl}}
\newcommand{\swr}{\operatorname{swr}}
\newtheorem*{rep@theorem}{\rep@title}
\newcommand{\newreptheorem}[2]{%
\newenvironment{rep#1}[1]{%
 \def\rep@title{#2 \ref{##1}}%
 \begin{rep@theorem}}%
 {\end{rep@theorem}}}
\newtheorem{theorem}{Theorem}[section]
\newtheorem{lemma}{Lemma}[section]
\newtheorem{proposition}{Proposition}[section]
\newtheorem{corollary}{Corollary}[section]
\newtheorem{question}{Question}[section]
\newtheorem{problem}{Problem}[section]
\theoremstyle{definition}
\newtheorem{definition}{Definition}[section]
\newtheorem{remark}{Remark}[section]
\DeclareMathOperator{\Id}{id}
\DeclareMathOperator{\Des}{Des}
\DeclareMathOperator{\des}{des}
\DeclareMathOperator{\peak}{peak}
\DeclareMathOperator{\rmax}{rmax}
\DeclareMathOperator{\tl}{tl}
\DeclareMathOperator{\zeil}{zeil}
\DeclareMathOperator{\rot}{rot}
\begin{document}
\title{Fertility, Strong Fertility, and Postorder Wilf Equivalence}
\author{Colin Defant}
\address{Princeton University \\ Fine Hall, 304 Washington Rd. \\ Princeton, NJ 08544}
\email{cdefant@princeton.edu}

\begin{abstract}
We introduce ``fertility Wilf equivalence," ``strong fertility Wilf equivalence," and ``postorder Wilf equivalence," three variants of Wilf equivalence for permutation classes that formalize some phenomena that have appeared in the study of West's stack-sorting map. We introduce ``sliding operators" and show that they induce useful bijections among sets of valid hook configurations. Combining these maps with natural decompositions of valid hook configurations, we give infinitely many examples of fertility, strong fertility, and postorder Wilf equivalences. As a consequence, we obtain infinitely many joint equidistribution results concerning many permutation statistics. In one very special case, we reprove and extensively generalize a result of Bouvel and Guibert. Another case reproves and generalizes a result of the current author. A separate very special case proves and generalizes a conjecture of the current author concerning stack-sorting preimages and the Boolean-Catalan numbers. We end with two open questions.    
\end{abstract}

\maketitle

\bigskip

\section{Introduction}
Throughout this paper, the word ``permutation" refers to a permutation of a set of positive integers. We write permutations in one-line notation. Let $S_n$ denote the set of permutations of the set $[n]:=\{1,\ldots,n\}$. If $\pi$ is a permutation of length $n$, then the \emph{normalization} of $\pi$ is the permutation in $S_n$ obtained by replacing the $i^\text{th}$-smallest entry in $\pi$ with $i$ for all $i$. We say a permutation is \emph{normalized} if it is equal to its normalization (equivalently, if it is an element of $S_n$ for some $n$). The set $S_0$ contains one element: the empty permutation. 

\begin{definition}\label{Def2}
Given $\tau\in S_m$, we say a permutation $\sigma=\sigma_1\cdots\sigma_n$ \emph{contains the pattern} $\tau$ if there exist indices $i_1<\cdots<i_m$ in $[n]$ such that the normalization of $\sigma_{i_1}\cdots\sigma_{i_m}$ is $\tau$. We say $\sigma$ \emph{avoids} $\tau$ if it does not contain $\tau$. Let $\Av(\tau^{(1)},\tau^{(2)},\ldots)$ denote the set of normalized permutations that avoid the patterns $\tau^{(1)},\tau^{(2)},\ldots$ (this sequence of patterns could be finite or infinite). A \emph{permutation class} is a set of permutations that is of the form $\Av(\tau^{(1)},\tau^{(2)},\ldots)$ for some patterns $\tau^{(1)},\tau^{(2)},\ldots$. Let $\Av_n(\tau^{(1)},\tau^{(2)},\ldots)=\Av(\tau^{(1)},\tau^{(2)},\ldots)\cap S_n$. 
\end{definition}

One of the central definitions in the study of permutation patterns is that of Wilf equivalence. We say two permutation classes $\Av(\tau^{(1)},\tau^{(2)},\ldots)$ and $\Av(\tau'^{(1)},\tau'^{(2)},\ldots)$ are \emph{Wilf equivalent} if $|\Av_n(\tau^{(1)},\tau^{(2)},\ldots)|=|\Av_n(\tau'^{(1)},\tau'^{(2)},\ldots)|$ for all $n\geq 0$. For example, it is well known that $\Av(\tau)$ and $\Av(\tau')$ are Wilf equivalent whenever $\tau,\tau'\in S_3$. There are many examples of ``trivial" Wilf equivalences that arise from basic symmetries, but there are also several interesting examples of nontrivial Wilf equivalences \cite{Bloom, Burstein, Linton, Stankova}. 

It is difficult to overstate the importance of permutation patterns in modern combinatorics \cite{Bona, Kitaev}. This area originated in the book \emph{The Art of Computer Programming}  \cite{Knuth}, where Knuth introduced a certain \emph{stack-sorting algorithm} and proved that a permutation is sortable via this algorithm if and only if it avoids the pattern $231$. In his Ph.D. thesis, West \cite{West} modified Knuth's original definition to form a function, which we call the \emph{stack-sorting map} and denote by $s$. The name ``stack-sorting" comes from the original definition of $s$, in which one sends a permutation through a vertical ``stack" according to a certain greedy procedure. A simple alternative definition of $s$ is as follows. First, $s$ maps the empty permutation to itself. If $\pi$ is a permutation of a set of positive integers with largest entry $n$, then we can write $\pi=LnR$. We then simply declare $s(\pi)=s(L)s(R)n$. For example, \[s(43512)=s(43)\,s(12)\,5=s(3)\,4\,s(1)\,2\,5=34125.\] There is now a vast collection of literature concerning the stack-sorting map \cite{Bona, BonaWords, BonaSurvey, BonaSimplicial, BonaSymmetry, Bousquet98, Bousquet, Bouvel, BrandenActions, Branden3, Claesson, Cori, DefantCatalan, DefantCounting, DefantDescents, DefantEnumeration, DefantFertility, DefantPolyurethane, DefantPostorder, DefantPreimages, DefantClass, DefantEngenMiller, DefantKravitz, Dulucq, Dulucq2, Fang, Goulden, Hanna, Ulfarsson, West, Zeilberger}.

West \cite{West} defined the \emph{fertility} of a permutation $\pi$ to be $|s^{-1}(\pi)|$, the number of preimages of $\pi$ under $s$. A priori, computing fertilities of permutations is a difficult task. To support this claim, we note that West went to great lengths to find formulas for the fertilities of the very specific permutations of the forms \[23\cdots k1(k+1)\cdots n,\quad 12\cdots(k-2)k(k-1)(k+1)\cdots n,\quad\text{and}\quad k12\cdots(k-1)(k+1)\cdots n.\] In \cite{Bousquet}, Bousquet-M\'elou defined a permutation to be \emph{sorted} if its fertility is positive. She gave an algorithm for determining whether or not a given permutation is sorted and stated that it would be interesting to find a general method to compute the fertility of any given permutation. This was accomplished in much greater generality in \cite{DefantPostorder} and \cite{DefantClass} using new combinatorial objects called ``valid hook configurations." We review the theory of valid hook configurations and their applications to computing fertilities in Section \ref{Sec:VHCs}. 

Most of the questions that researchers have asked about the stack-sorting map can be phrased in terms of preimages of sets of permutations under $s$ \cite{BonaWords, BonaSurvey, BonaSymmetry, Bousquet98, Bousquet, Bouvel, Claesson, DefantCatalan, DefantCounting, DefantDescents, DefantEnumeration, DefantFertility, DefantPolyurethane, DefantPostorder, DefantPreimages, DefantClass, DefantEngenMiller, DefantKravitz, Knuth, Hanna, Ulfarsson, West, Zeilberger}. For example, Bouvel and Guibert \cite{Bouvel} studied permutations that could be sorted via two iterations of the stack-sorting map and a given dihedral symmetry; their results can be reinterpreted as formulas for the sizes of $s^{-1}(\Av_n(132))$ and $s^{-1}(\Av_n(312))$. In \cite{DefantCounting, DefantEnumeration, DefantClass}, the current author computed the fertilities of many sets of the form $\Av_n(\tau^{(1)},\ldots,\tau^{(r)})$ for $\tau^{(1)},\ldots,\tau^{(r)}\in S_3$. He also refined these enumerative results according to the statistics $\des$ and $\peak$. Even more classically, the set of $1$-stack-sortable permutations in $S_n$ is $s^{-1}(\Av_n(21))$, while the set of $2$-stack-sortable permutations in $S_n$ is $s^{-1}(\Av_n(231))$ (see \cite{Bona, BonaSurvey, DefantCounting, DefantPreimages} for definitions). One other motivation for studying preimages of permutation classes under $s$ comes from the fact that these sets are often themselves permutation classes. For instance, $s^{-1}(\Av(321))=\Av(34251,35241,45231)$ (see \cite{DefantClass} for more examples). This leads us to define the fertility of a set $A$ of permutations to be $|s^{-1}(A)|$. It turns out that there are many interesting examples of sets of permutations that have the same fertility. For example, Bouvel and Guibert \cite{Bouvel} showed that 
\begin{equation}\label{Eq2}
|s^{-1}(\Av_n(231))|=|s^{-1}(\Av_n(132))|
\end{equation} for all $n\geq 0$, proving a conjecture of Claesson, Dukes, and Steingrimsson. The current author \cite{DefantClass} also proved that 
\begin{equation}\label{Eq3}
|s^{-1}(\Av_n(132,312))|=|s^{-1}(\Av_n(231,312))|
\end{equation} and conjectured that 
\begin{equation}\label{Eq4} 
|s^{-1}(\Av_n(132,231))|=|s^{-1}(\Av_n(231,312))|.
\end{equation} In fact, we can even trace this phenomenon back to West \cite{West}, who showed that 
\begin{equation}\label{Eq5}
|s^{-1}(\Av_n(132,312,321))|=|s^{-1}(\Av_n(132,231,321))|.
\end{equation} 
This last equation was reproven by Bousquet-M\'elou \cite{Bousquet}. Motivated by these examples, we define a new variant of Wilf equivalence. 

\begin{definition}\label{Def3}
We say that two permutation classes $\Av(\tau^{(1)},\tau^{(2)},\ldots)$ and $\Av(\tau'^{(1)},\tau'^{(2)},\ldots)$ are \emph{fertility Wilf equivalent} if \[|s^{-1}(\Av_n(\tau^{(1)},\tau^{(2)},\ldots))|=|s^{-1}(\Av_n(\tau'^{(1)},\tau'^{(2)},\ldots))|\] for all $n\geq 0$. 
\end{definition}

Previously, \eqref{Eq2}, \eqref{Eq3}, and \eqref{Eq5} were the only known nontrivial examples of fertility Wilf equivalences. In fact, the connections between the pairs of permutation classes in these examples are much deeper than fertility Wilf equivalence. Bouvel and Guibert \cite{Bouvel} listed several permutation statistics and proved that these statistics are jointly equidistributed on $s^{-1}(\Av_n(132))$ and $s^{-1}(\Av_n(231))$ (see Section \ref{Sec:Stats} for definitions). Similarly, the current author \cite{DefantClass} showed that the statistics $\des$ and $\peak$ (defined in Section \ref{Sec:Stats}) are each equidistributed on $s^{-1}(\Av_n(132,312))$ and $s^{-1}(\Av_n(231,312))$ and on $s^{-1}(\Av_n(132,312,321))$ and $s^{-1}(\Av_n(132,231,321))$. It turns out that these statistics are jointly equidistributed on each of these pairs of sets, but we will actually say much more below. 

In Section \ref{Sec:Trees}, we define\footnote{These definitions are not new to this article. In fact, these trees have received a large amount of attention.} a set $\DPT$ of labeled rooted trees called \emph{decreasing plane trees} and consider a special subset of $\DPT$, denoted $\DPT^{(2)}$, whose elements are called \emph{decreasing binary plane trees}. Decreasing binary plane trees can be used to give an alternative definition of the stack-sorting map. Replacing decreasing binary plane trees with other collections of decreasing plane trees yields extensive generalizations of the problem of computing fertilities of permutations. The current author introduced valid hook configurations in \cite{DefantPostorder} in order to develop a method for solving this general problem in a wide variety of natural cases. 

This general point of view involving decreasing plane trees allows us to define a much stronger variant of fertility Wilf equivalence, which we call ``postorder Wilf equivalence." We give this definition and discuss some of its consequences in Section \ref{Sec:Trees}. In particular, postorder Wilf equivalence implies fertility Wilf equivalence. We will also see in Proposition \ref{Prop1} that postorder Wilf equivalence implies a joint equidistribution result concerning a large (uncountable) collection of permutation statistics that we call ``skeletal" statistics, many of which are well-studied. In Section \ref{Sec:VHCs}, we use valid hook configurations to define a separate notion that we call ``strong fertility Wilf equivalence." We will see in Proposition \ref{Prop5} that strong fertility Wilf equivalence implies fertility Wilf equivalence along with some joint equidistribution results for certain permutation statistics. 

In Section \ref{Sec:Main}, we define ``sliding operators" $\swu:\Av(231)\to\Av(132)$ and $\swl:\Av(132)\to\Av(312)$, which are bijections with several useful properties. The map $\swu$ allows us to vastly generalize Bouvel and Guibert's joint equidistribution result concerning stack-sorting preimages of $\Av_n(231)$ and $\Av_n(132)$. In fact, we will give one general unified construction that produces a large infinite collection of pairs of permutation classes that are strongly fertility Wilf equivalent and postorder Wilf equivalent. As a special consequence of one specific case of this construction, we recover the identity \eqref{Eq3}. 

To be completely precise, the joint equidistribution on $s^{-1}(\Av_n(231))$ and $s^{-1}(\Av_n(132))$ of all but one of the statistics that Bouvel and Guibert considered follows from Proposition \ref{Prop1} and Theorem \ref{Thm12}. In order to completely reprove Bouvel and Guibert's full result, we need a short additional argument to handle the last remaining statistic. This is the ``Zeilberger statistic" $\zeil$, which we define in Section \ref{Sec:Stats}.

In a similar vein, we use $\swl$ to prove that $\Av(132,231)$ and $\Av(231,312)$ are strongly fertility Wilf equivalent. In particular, this proves that they are fertility Wilf equivalent, which constitutes the identity \eqref{Eq4} that was conjectured in \cite{DefantClass}. We also show that these sets are not postorder Wilf equivalent. As before, our argument generalizes substantially, and we actually obtain a unified construction that produces a large infinite collection of pairs of permutation classes that are strongly fertility Wilf equivalent. If $\Av(\tau^{(1)},\tau^{(2)},\ldots)$ and $\Av(\tau'^{(1)},\tau'^{(2)},\ldots)$ form one of these pairs, then it follows from Proposition \ref{Prop5} that the statistics $\des$ and $\peak$ are jointly equidistributed on $s^{-1}(\Av_n(\tau^{(1)},\tau^{(2)},\ldots))$ and $s^{-1}(\Av_n(\tau'^{(1)},\tau'^{(2)},\ldots))$ for all $n\geq 0$. At the end of Section \ref{Sec:Main}, we prove the surprising fact that $\zeil$ is also equidistributed on these two sets. In fact, we will prove the stronger statement that $\des$, $\peak$, and $\zeil$ are jointly equidistributed on these sets. In particular, these three statistics are jointly equidistributed on $s^{-1}(\Av_n(132,231))$ and $s^{-1}(\Av_n(231,312))$ for all $n\geq 0$.  

In Section \ref{Sec:Conclusion}, we discuss the implications among the variants of Wilf equivalence introduced throughout the paper. In particular, we show that fertility Wilf equivalence does not imply strong fertility Wilf equivalence. We also end with two open questions.  

\section{Decreasing Plane Trees}\label{Sec:Trees}
A \emph{rooted plane tree} is a rooted tree in which the (possibly empty) subtrees of each vertex are linearly ordered. This is a very broad collection of trees; it is even more broad than the collection of trees discussed in \cite{DefantPostorder}. In fact, there are infinitely many rooted plane trees with just two vertices because such a tree can have arbitrarily many empty subtrees. For example, the root vertex could have $27$ empty subtrees, followed by a child, then followed by $10$ more empty subtrees. Let us stress that the only purpose for considering such a large variety of trees is to demonstrate the versatility of our results. These different types arise in different contexts in combinatorics (binary plane trees, ternary plane trees, Motzkin trees, and many other natural families of trees all fall under the general umbrella of ``rooted plane trees"), so it is nice that our methods can handle all of them uniformly. This also strengthens the definition of postorder Wilf equivalence (hence, strengthening every theorem that yields an example of postorder Wilf equivalence). Figure \ref{Fig1} shows some of the (infinitely many) rooted plane trees with $3$ vertices. A \emph{binary plane tree} is a rooted plane tree in which each vertex has exactly $2$ (possibly empty) subtrees.

\begin{figure}[h]
\begin{center}
\includegraphics[width=.44\linewidth]{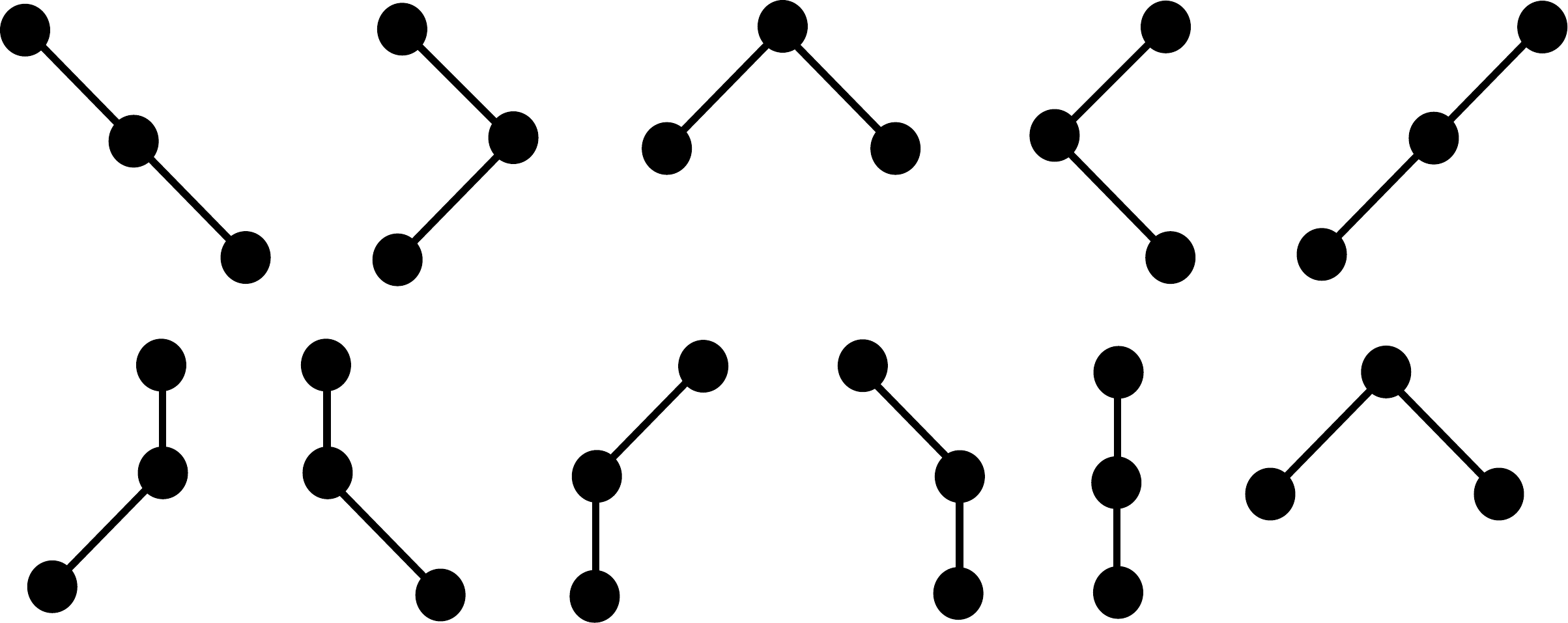}
\caption{Some rooted plane trees on $3$ vertices. The top $5$ are the binary plane trees on $3$ vertices. In the leftmost tree on the bottom, the root vertex has no empty subtrees, while its single child has a left child and an empty right subtree.}
\label{Fig1}
\end{center}  
\end{figure}

If $X$ is a set of positive integers, then a \emph{decreasing plane tree on $X$} is a rooted plane tree in which the vertices are labeled with the elements of $X$ (where each label is used exactly once) such that each nonroot vertex has a label that is strictly smaller than the label of its parent. Figure \ref{Fig2} shows two different decreasing binary plane trees on $\{1,\ldots,7\}$. Let $\DPT$ denote the set of all decreasing plane trees. Let $\DPT^{(2)}\subseteq\DPT$ be the set of decreasing binary plane trees. 

\begin{figure}[t]
\begin{center} 
\includegraphics[height=1.8cm]{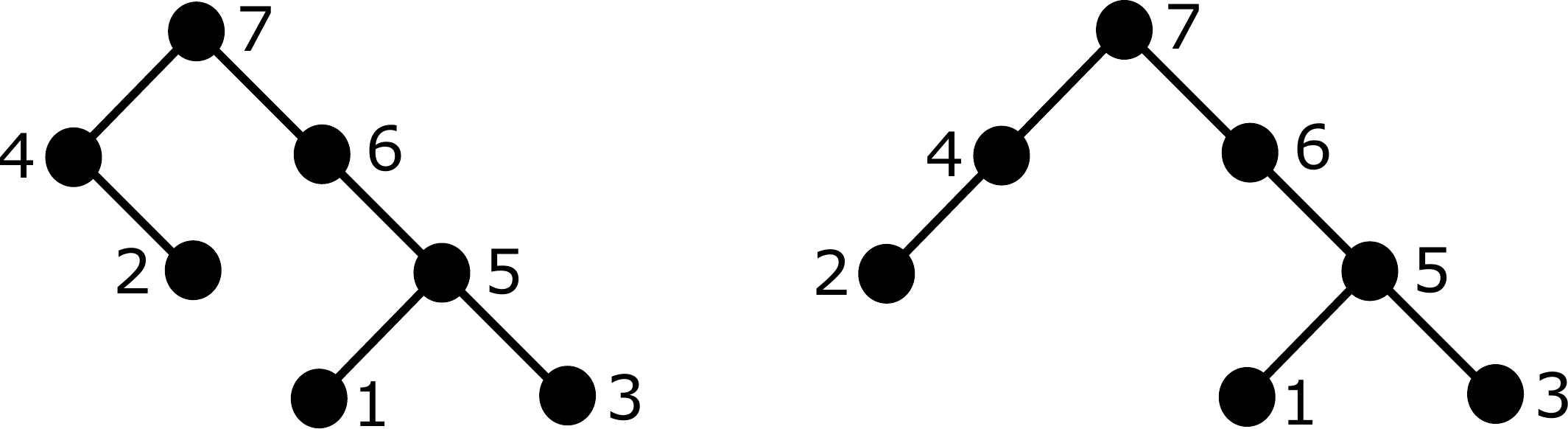}
\end{center}
\caption{Two different decreasing binary plane trees on $\{1,\ldots,7\}$.}\label{Fig2}
\end{figure}

A \emph{tree traversal} is a scheme by which one can read the labels of a labeled tree in some meaningful order to obtain a permutation. One tree traversal, called the \emph{in-order traversal} (sometimes called the \emph{symmetric order traversal}), is only defined on $\DPT^{(2)}$. In order to read a decreasing binary plane tree in in-order, we read the left subtree of the root in in-order, then read the label of the root, and finally read the right subtree of the root in in-order. Let $I(T)$ denote the in-order reading of a decreasing binary plane tree $T$. The map $I$ is a bijection from the set of decreasing binary plane trees on a set $X$ to the set of permutations of $X$ \cite{Bona, Stanley}. Under this bijection, the trees on the left and right in Figure \ref{Fig2} correspond to the permutations $4276153$ and $2476153$, respectively. 

Another tree traversal, called the \emph{postorder traversal}, is defined on all decreasing plane trees. We read a decreasing plane tree in postorder by reading the subtrees of the root from left to right (each in postorder) and then reading the label of the root. Both trees in Figure \ref{Fig2} have postorder $2413567$. Letting $P(T)$ be the postorder reading of a decreasing plane tree $T$, we find that $P$ is a map from $\DPT$ to the set of all permutations. The basic yet fundamental connection between the stack-sorting map and decreasing plane trees comes from the identity \cite{Bona}
\begin{equation}\label{Eq6}
s=P\circ I^{-1}.
\end{equation} It follows from this identity that the fertility of a permutation is equal to the number of decreasing binary plane trees whose postorders are that permutation. In symbols, this says that 
\begin{equation}\label{Eq7}
|s^{-1}(\pi)|=\left|P^{-1}(\pi)\cap \DPT^{(2)}\right|.
\end{equation} 
Therefore, we can vastly generalize the problem of computing the fertility of a permutation $\pi$ to the problem of computing \[\left|P^{-1}(\pi)\cap Y\right|,\] where $Y$ is an arbitrary subset of $\DPT$. In \cite{DefantPostorder}, the current author developed a method for solving this problem for a wide variety of sets $Y$. 

The \emph{skeleton} of a decreasing plane tree $T$ is the rooted plane tree obtained by removing the labels from $T$. If $\mathscr T,\mathscr T'\subseteq\DPT$, then we say a map $\psi:\mathscr T\to\mathscr T'$ is \emph{skeleton-preserving} if $T$ and $\psi(T)$ have the same skeleton for all $T\in\mathscr T$. We end this section with one of the main definitions of this paper.   

\begin{definition}\label{Def4}
We say the permutation classes $\Av(\tau^{(1)},\tau^{(2)},\ldots)$ and $\Av(\tau'^{(1)},\tau'^{(2)},\ldots)$ are \emph{postorder Wilf equivalent} if there exists a skeleton-preserving bijection \[\eta:P^{-1}(\Av(\tau^{(1)},\tau^{(2)},\ldots))\to P^{-1}(\Av(\tau'^{(1)},\tau'^{(2)},\ldots)).\] 
\end{definition}

\section{Permutation Statistics}\label{Sec:Stats}
A \emph{permutation statistic} is a function $f$ from the set of all permutations to $\mathbb N\cup\{0\}$ such that $f(\pi)=f(\pi')$ whenever $\pi$ and $\pi'$ have the same normalization. Note that a permutation statistic is completely determined by its values on normalized permutations. We now set the stage for subsequent sections with notation and terminology concerning permutation statistics. We also prove a proposition that elucidates the strength of postorder Wilf equivalence. 

A \emph{descent} of a permutation $\pi=\pi_1\cdots\pi_n$ is an index $i\in[n-1]$ such that $\pi_i>\pi_{i+1}$. An \emph{ascent} of $\pi$ is an index $i\in[n-1]$ such that $\pi_i<\pi_{i+1}$. A \emph{peak} of $\pi$ is an index $i\in\{2,\ldots,n-1\}$ such that $\pi_{i-1}<\pi_i>\pi_{i+1}$. The \emph{descent set} of $\pi$, denoted $\Des(\pi)$, is the set of descents of $\pi$. One of the most important permutation statistics is $\des$, which is defined by $\des(\pi)=|\Des(\pi)|$. Let $\peak(\pi)$ denote the number of peaks of $\pi$. 

A \emph{left-to-right maximum} of $\pi=\pi_1\cdots\pi_n$ is an entry $\pi_i$ such that $\pi_j<\pi_i$ whenever $1\leq j\leq i-1$. A \emph{right-to-left maximum} of $\pi=\pi_1\cdots\pi_n$ is an entry $\pi_i$ such that $\pi_j<\pi_i$ whenever $i+1\leq j\leq n$. Let $\text{lmax}(\pi)$ and $\rmax(\pi)$ denote the number of left-to-right maxima of $\pi$ and the number of right-to-left maxima of $\pi$, respectively.  

The \emph{Zeilberger statistic}, which originated in Zeilberger's study of $2$-stack-sortable permutations \cite{Zeilberger} and has received attention in subsequent articles such as \cite{Bousquet98, Bouvel, Claesson2, DefantPolyurethane}, is denoted by $\zeil$. For $\pi\in S_n$, $\zeil(\pi)$ is defined to be the largest integer $m$ such that the entries $n,n-1,\ldots,n-m+1$ appear in decreasing order in $\pi$.  
 
The \emph{tail length} of a permutation $\pi=\pi_1\cdots\pi_n\in S_n$, denoted $\tl(\pi)$, is the smallest
nonnegative integer $\ell$ such that $\pi_{n-\ell}\neq n-\ell$. We make the convention that $\tl(123\cdots n)=n$. 
For example, \[\tl(35412678)=3,\quad\tl(1324)=1,\quad\text{and}\quad\tl(21453)=0.\] The tail length is a new statistic that was introduced in \cite{DefantEngenMiller}; it is useful in the study of the stack-sorting map \cite{DefantEngenMiller, DefantCounting, DefantDescents, DefantEnumeration} and will play a crucial role for us in Section \ref{Sec:Main}. In what follows, recall the in-order reading $I$ and the notion of the skeleton of a decreasing plane tree from Section \ref{Sec:Trees}.

\begin{definition}\label{Def11}
The \emph{skeleton} of a permutation $\pi$ is the skeleton of $I^{-1}(\pi)$. We say a permutation statistic $f$ is \emph{skeletal} if for every permutation $\pi$, $f(\pi)$ only depends on the skeleton of $\pi$. 
\end{definition}

Let $\text{LenDes}$ denote the set of permutation statistics $f$ such that $f(\pi)$ only depends on the length of $\pi$ and the descent set $\Des(\pi)$. The set of statistics discussed in \cite{Bouvel} (see \cite{Bouvel,Claesson2} for all of their definitions) is 
\begin{equation}\label{Eq8}
\text{LenDes}\cup\{\text{lmax},\rmax,\zeil,\text{indmax},\text{slmax},\text{slmax}\circ\text{rev}\}.
\end{equation} 
All of the statistics in \eqref{Eq8} except $\zeil$ are skeletal. For example, one can show that $i$ is a descent of $\pi$ if and only if the vertex whose label is read $i^\text{th}$ in the in-order traversal of $I^{-1}(\pi)$ has a right child. Therefore, all of the statistics in $\text{LenDes}$ are skeletal. Even though $\tl,\rmax,\text{lmax},\text{indmax},\text{slmax}$, and $\text{slmax}\circ\text{rev}$ are not in $\text{LenDes}$, they are still skeletal. 

The following lemma connects the statistics $\zeil,\rmax$, and $\tl$ with the stack-sorting map $s$. We will use it to understand the statistic $\zeil$ in the proof of Corollary \ref{Cor3}. 

\begin{lemma}\label{Lem4}
For every permutation $\sigma$, we have \[\zeil(\sigma)=\min\{\rmax(\sigma),\tl(s(\sigma))\}.\]
\end{lemma}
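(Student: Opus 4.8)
Since $\zeil$, $\rmax$, and $\tl\circ s$ are all invariant under normalization (and $s$ commutes with normalization), I may assume $\sigma\in S_n$. Write $\sigma=LnR$, where $n$ is the largest entry, and recall the recursion $s(\sigma)=s(L)s(R)n$. Set $m=\zeil(\sigma)$, so that $n,n-1,\ldots,n-m+1$ occur in this decreasing order in $\sigma$, while $n-m$ (when $m<n$) occurs before $n-m+1$. The plan is to prove the two inequalities $\zeil(\sigma)\le\min\{\rmax(\sigma),\tl(s(\sigma))\}$ and $\min\{\rmax(\sigma),\tl(s(\sigma))\}\le\zeil(\sigma)$ separately, the second by induction on $n$.

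For the first inequality I would begin with the elementary observation that each of $n,n-1,\ldots,n-m+1$ is a right-to-left maximum of $\sigma$: every entry larger than $n-j$ lies among $n,\ldots,n-j+1$, all of which occur before $n-j$, so nothing larger than $n-j$ sits to its right. Hence these are exactly the $m$ leftmost right-to-left maxima, giving $\zeil(\sigma)\le\rmax(\sigma)$; moreover $\rmax(\sigma)>m$ holds precisely when $n-m+1$ is not the last entry, in which case every entry to the right of $n-m+1$ is strictly smaller than $n-m$ (nothing $\ge n-m$ can appear there). The bound $\zeil(\sigma)\le\tl(s(\sigma))$ I would prove by induction: if $m\ge2$ then $n-1,\ldots,n-m+1\in R$ and these are precisely the top $m-1$ values of $R$, occurring in decreasing order, so $\zeil(R)\ge m-1$; the inductive hypothesis makes $s(R)$ end in $n-m+1,\ldots,n-1$, whence $s(\sigma)=s(L)s(R)n$ ends in $n-m+1,\ldots,n-1,n$ and $\tl(s(\sigma))\ge m$ (the case $m=1$ being trivial, as $s(\sigma)$ always ends in $n$).

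For the reverse inequality it suffices to show that if $\rmax(\sigma)>m$ then $\tl(s(\sigma))\le m$, which I would prove by induction on $n$. By the lower bound, positions $n-m+1,\ldots,n$ of $s(\sigma)$ already carry $n-m+1,\ldots,n$, so I only need the value $n-m$ to avoid position $n-m$. I would split on the location of $n-m$. If $n-m\in L$, then since the entries to the right of $n-m+1$ force $|R|\ge m$, the value $n-m$ sits in the initial block $s(L)$, at a position at most $|L|=n-1-|R|<n-m$, so it cannot occupy position $n-m$. If instead $n-m\in R$, then $\zeil(R)=m-1$ exactly (the chain in $R$ breaks at $n-m$, just as in $\sigma$) while $\rmax(R)>m-1$ (the entries right of $n-m+1$ all lie in $R$), so the inductive hypothesis forces $\tl(s(R))=m-1$; this pins the value $n-m$ to a position $\le|R|-m$ within $s(R)$, hence to position $\le n-m-1$ within $s(\sigma)$, again off position $n-m$.

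I expect this reverse inequality to be the crux. The delicate point is that $\tl$ and $\zeil$ both track runs of \emph{consecutive} top values, yet the recursion for $s$ distributes the values across $L$ and $R$, so I must argue that once the chain $n,\ldots,n-m+1$ breaks, the offending value $n-m$ is pushed strictly to the left of position $n-m$ in $s(\sigma)$. The structural fact enabling this---that $\rmax(\sigma)>m$ forces a genuinely smaller entry to the right of $n-m+1$, which propagates through the recursion to cap $\tl(s(R))$ at $m-1$---is what I would need to set up carefully, and the two inductive cases above are arranged precisely to exploit it.
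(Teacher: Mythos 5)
Your proof is correct, but it takes a genuinely different route from the paper's. The paper avoids induction entirely: writing $c=\zeil(\sigma)$, it decomposes $\sigma$ along the whole chain as $\sigma=\mu^{(0)}\,n\,\mu^{(1)}(n-1)\,\mu^{(2)}\cdots\mu^{(c-1)}(n-c+1)\,\mu^{(c)}$ (with $n-c\notin\mu^{(c)}$) and unrolls the recursion once and for all to obtain $s(\sigma)=s(\mu^{(0)})s(\mu^{(1)})\cdots s(\mu^{(c-1)})s(\mu^{(c)})(n-c+1)\cdots(n-1)n$. From this display both bounds $c\leq\rmax(\sigma)$ and $c\leq\tl(s(\sigma))$ are immediate, and the reverse inequality is a two-line case split: if $\mu^{(c)}$ is empty then $c=\rmax(\sigma)$, and otherwise the entry of $s(\sigma)$ immediately preceding $n-c+1$ lies in $s(\mu^{(c)})$, hence in $\mu^{(c)}$, hence is not $n-c$, so $c=\tl(s(\sigma))$. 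You instead apply the recursion $s(LnR)=s(L)s(R)n$ only once and induct on $n$, splitting on whether $n-m$ lies in $L$ or in $R$. Your logic goes through, but two points deserve care: in the case $n-m\in R$ you genuinely need the \emph{full} inductive hypothesis (both inequalities) to pin down $\tl(s(R))=m-1$, so the induction must be organized as strong induction on the whole identity rather than on each inequality separately, which your write-up leaves implicit; and the subcase $n-m\in R$ with $m=1$ is vacuous (since $\zeil(\sigma)=1$ forces $n-1\in L$), which is worth noting because your justification there quotes $\zeil(R)=m-1=0$, impossible for nonempty $R$. What the paper's unrolled decomposition buys is precisely the avoidance of this bookkeeping: the problematic value $n-c$ is excluded from $\mu^{(c)}$ by construction, so no induction and no case analysis on its location is ever needed. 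What your version buys is that it only ever uses the one-step recursion, at the cost of threading statistics through the normalization of $R$.
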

\begin{proof}
Without loss of generality, we may assume $\sigma$ is normalized. Choose $\sigma\in S_n$, and put $c=\zeil(\sigma)$. We can write
\[\sigma=\mu^{(0)}\,n\,\mu^{(1)}(n-1)\,\mu^{(2)}\cdots\mu^{(c-1)}(n-c+1)\,\mu^{(c)},\] where $n-c$ does not appear in the subpermutation $\mu^{(c)}$. Since $n,n-1,\ldots,n-c+1$ are right-to-left maxima of $\sigma$, we have $c\leq\rmax(\sigma)$. By the definition of the stack-sorting map, we have \[s(\sigma)=s(\mu^{(0)})s(\mu^{(1)})\cdots s(\mu^{(c-1)})s(\mu^{(c)})(n-c+1)\cdots(n-1)n.\] This shows that $c\leq \tl(s(\sigma))$. We now know that $c\leq\min\{\rmax(\sigma),\tl(s(\sigma))\}$, so it suffices to prove the reverse inequality. If $\mu^{(c)}$ is empty, then $c=\rmax(\sigma)\geq\min\{\rmax(\sigma),\tl(s(\sigma))\}$. Therefore, we may assume $\mu^{(c)}$ is nonempty. The entry in $s(\sigma)$ immediately preceding $n-c+1$ is an entry in $s(\mu^{(c)})$. This is also an entry in $\mu^{(c)}$, so it is not $n-c$. Hence, $c=\tl(s(\sigma))\geq\min\{\rmax(\sigma),\tl(s(\sigma))\}$. 
\end{proof} 

We end this section by discussing joint equidistribution of permutation statistics and how it relates to postorder Wilf equivalence.
 
\begin{definition}\label{Def6} 
Let $A$ and $A'$ be sets of normalized permutations. Let $\mathcal E$ be a set of permutation statistics. We say the elements of $\mathcal E$ are \emph{jointly equidistributed on} $A$ \emph{and} $A'$ if there is a bijection $g:A\to A'$ such that $f(g(\pi))=f(\pi)$ for all $\pi\in A$ and all $f\in\mathcal E$. 
\end{definition} 

We stated in the introduction that Bouvel and Guibert \cite{Bouvel} proved (using different language) that $\Av(231)$ and $\Av(132)$ are fertility Wilf equivalent. In fact, they proved the much stronger statement that the statistics listed in \eqref{Eq8} are jointly equidistributed on $s^{-1}(\Av_n(231))$ and $s^{-1}(\Av_n(132))$ for all $n\geq 0$. The following proposition tells us that the joint equidistribution of all of these statistics other than $\zeil$ on $s^{-1}(\Av_n(231))$ and $s^{-1}(\Av_n(132))$ is a special consequence of the fact that $\Av(231)$ and $\Av(132)$ are postorder Wilf equivalent. This, in turn, is a special case of Theorem \ref{Thm12} in Section \ref{Sec:Main}. We will also be able to add $\zeil$ to this list of equidistributed statistics in Corollary~\ref{Cor3} with the help of Lemma \ref{Lem4}.  

\begin{proposition}\label{Prop1}
Let $\Av(\tau^{(1)},\tau^{(2)},\ldots)$ and $\Av(\tau'^{(1)},\tau'^{(2)},\ldots)$ be permutation classes that are postorder Wilf equivalent. For every $n\geq 0$, all skeletal statistics are jointly equidistributed on $s^{-1}(\Av_n(\tau^{(1)},\tau^{(2)},\ldots))$ and $s^{-1}(\Av_n(\tau'^{(1)},\tau'^{(2)},\ldots))$. In particular, these two permutation classes are fertility Wilf equivalent. 
\end{proposition}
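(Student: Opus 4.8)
The plan is to build the required bijection on stack-sorting preimages directly from the postorder Wilf equivalence bijection, using the factorization $s = P \circ I^{-1}$ from \eqref{Eq6} as the organizing principle. By hypothesis there is a skeleton-preserving bijection $\eta : P^{-1}(\Av(\tau^{(1)},\tau^{(2)},\ldots)) \to P^{-1}(\Av(\tau'^{(1)},\tau'^{(2)},\ldots))$. The sets $s^{-1}(\Av_n(\cdots))$ and $P^{-1}(\Av(\cdots))$ are linked by \eqref{Eq7}: a permutation $\sigma$ lies in $s^{-1}(\Av_n(\cdots))$ exactly when $s(\sigma) = P(I^{-1}(\sigma))$ avoids the patterns, i.e.\ exactly when the decreasing binary plane tree $I^{-1}(\sigma)$ lies in $P^{-1}(\Av(\cdots))$. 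So I would first observe that $I^{-1}$ restricts to a bijection from $s^{-1}(\Av_n(\cdots))$ onto $P^{-1}(\Av(\cdots)) \cap \DPT^{(2)} \cap \{T : T \text{ has } n \text{ vertices}\}$.

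The main step is then to define the candidate bijection $g : s^{-1}(\Av_n(\tau^{(1)},\ldots)) \to s^{-1}(\Av_n(\tau'^{(1)},\ldots))$ by $g = I \circ \eta \circ I^{-1}$, and to check that this is well-defined. For this I must verify that $\eta$ carries decreasing \emph{binary} plane trees to decreasing binary plane trees, so that $I$ can be applied to the output. This is where the skeleton-preserving hypothesis does the real work: since $\eta(T)$ has the same skeleton as $T$, and being a binary plane tree is a property of the skeleton alone, $\eta(T) \in \DPT^{(2)}$ whenever $T \in \DPT^{(2)}$. Because $\eta$ is a skeleton-preserving bijection between the two full postorder-preimage sets and binary-ness is skeleton-determined, $\eta$ restricts to a bijection on the binary parts, which gives that $g$ is a well-defined bijection. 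Note that $\eta$ also preserves the number of vertices (the skeleton determines the vertex count), so $g$ maps $S_n$ to $S_n$ as required.

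It remains to prove the equidistribution of skeletal statistics under $g$. Here I would use Definition~\ref{Def11}: for any permutation $\sigma$, its skeleton is by definition the skeleton of $I^{-1}(\sigma)$, so a skeletal statistic $f$ satisfies $f(\sigma) = \widetilde f(\text{skeleton of } I^{-1}(\sigma))$ for some function $\widetilde f$ of the skeleton. Applying this to $g(\sigma) = I(\eta(I^{-1}(\sigma)))$, the skeleton of $g(\sigma)$ is the skeleton of $I^{-1}(g(\sigma)) = \eta(I^{-1}(\sigma))$, which equals the skeleton of $I^{-1}(\sigma)$ because $\eta$ is skeleton-preserving. Hence $f(g(\sigma)) = f(\sigma)$ for every skeletal $f$, which is precisely joint equidistribution in the sense of Definition~\ref{Def6}. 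Taking $\mathcal E$ to contain only the trivial constant statistics (or simply comparing cardinalities, since $g$ is a bijection) yields fertility Wilf equivalence as the stated special case.

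The step I expect to be the main obstacle is the well-definedness check, namely confirming that $\eta$ genuinely restricts to a bijection between the binary parts of the two postorder-preimage sets and that no permutation of the wrong length is produced. This hinges entirely on the observation that both ``binary'' and ``has $n$ vertices'' are determined by the skeleton, so the skeleton-preserving property of $\eta$ forces compatibility; once that is granted, the equidistribution argument is a short unwinding of the definitions.
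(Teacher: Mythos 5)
Your proposal is correct and follows essentially the same route as the paper: restrict the skeleton-preserving bijection $\eta$ to decreasing binary plane trees (justified by the fact that being binary and having $n$ vertices are skeleton-determined properties), conjugate by the in-order reading $I$ to get a bijection $I\circ\widetilde\eta\circ I^{-1}$ between the stack-sorting preimage sets, and then observe that skeleton-preservation of $\eta$ immediately gives invariance of every skeletal statistic. The paper's proof compresses your well-definedness check into a single sentence, but the underlying argument is identical.
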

\begin{proof}
According to Definition \ref{Def4}, there exists a skeleton-preserving bijection \[\eta:P^{-1}(\Av(\tau^{(1)},\tau^{(2)},\ldots))\to P^{-1}(\Av(\tau'^{(1)},\tau'^{(2)},\ldots)).\] For each positive integer $n$, the map $\eta$ sends decreasing binary plane trees on $[n]$ to decreasing binary plane trees on $[n]$. In other words, it induces a skeleton-preserving bijection \begin{equation}\label{Eq18}
\widetilde\eta:P^{-1}(\Av_n(\tau^{(1)},\tau^{(2)},\ldots))\cap\DPT^{(2)}\to P^{-1}(\Av_n(\tau'^{(1)},\tau'^{(2)},\ldots))\cap\DPT^{(2)}.
\end{equation} Using the identity $s=P\circ I^{-1}$ from \eqref{Eq6}, we find that \[I(P^{-1}(\Av_n(\tau^{(1)},\tau^{(2)},\ldots))\cap\DPT^{(2)})=s^{-1}(\Av_n(\tau^{(1)},\tau^{(2)},\ldots)).\] It follows that the map $\eta^{\#}:=I\circ\widetilde\eta\circ I^{-1}$ is a bijection from $s^{-1}(\Av_n(\tau^{(1)},\tau^{(2)},\ldots))$ to $s^{-1}(\Av_n(\tau'^{(1)},\tau'^{(2)},\ldots))$. Because $\widetilde\eta$ is skeleton-preserving, $I^{-1}(\sigma)$ and $I^{-1}(\eta^{\#}(\sigma))$ have the same skeleton. This means that $\sigma$ and $\eta^{\#}(\sigma)$ have the same skeleton, so $f(\sigma)=f(\eta^{\#}(\sigma))$ for every $\sigma\in s^{-1}(\Av_n(\tau^{(1)},\tau^{(2)},\ldots))$ and every skeletal statistic $f$.   
\end{proof}

\begin{remark}\label{Rem4}
In the proof of Proposition \ref{Prop1}, we only used the hypothesis that the permutation classes were postorder Wilf equivalent in order to deduce the existence of the skeleton-preserving bijection $\widetilde\eta$ in \eqref{Eq18}. We never used the full strength of the hypothesis that there is a skeleton-preserving bijection between the \emph{much larger} sets $P^{-1}(\Av_n(\tau^{(1)},\tau^{(2)},\ldots))$ and $P^{-1}(\Av_n(\tau'^{(1)},\tau'^{(2)},\ldots))$. In other words, we really only used the fact that the permutation classes are postorder Wilf equivalent when we restrict our attention to decreasing binary plane trees. Therefore, stating that two permutation classes are postorder Wilf equivalent is much stronger than stating that they satisfy the conclusion of Proposition \ref{Prop1}. To phrase this more precisely, let us say that two permutation classes $\Av(\tau^{(1)},\tau^{(2)},\ldots)$ and $\Av(\tau'^{(1)},\tau'^{(2)},\ldots)$ are \emph{binary postorder Wilf equivalent} if there exists a skeleton-preserving bijection $\widetilde\eta$ as in \eqref{Eq18}. In Section \ref{Sec:Conclusion}, we show that $\Av(123)$ and $\Av(123,3214)$ are binary postorder Wilf equivalent but not postorder Wilf equivalent. \hspace*{\fill}$\lozenge$ 
\end{remark}

\section{Valid Hook Configurations}\label{Sec:VHCs} 

In \cite{DefantPostorder}, the current author introduced ``valid hook configurations" in order to solve the problem of computing $\left|P^{-1}(\pi)\cap Y\right|$, where $P$ denotes the postorder traversal defined in Section \ref{Sec:Main} and $Y$ is an arbitrary set of decreasing plane trees. We wish to break with the notational conventions introduced in that article. We use the term ``valid hook configuration" to refer to a slight variant of a specific type of object considered in \cite{DefantPostorder}. The objects turn out to have interesting combinatorial properties in their own right \cite{DefantCatalan, DefantMotzkin, DefantEngenMiller, Hanna, Maya}. In this section, we state the main fertility formulas that connect valid hook configurations with the stack-sorting map. We also define strong fertility Wilf equivalence and discuss some of its consequences. 

The first part of a valid hook configuration is a permutation $\pi=\pi_1\cdots\pi_n$. We use the example permutation \[2\,\,7\,\,3\,\,5\,\,9\,\,10\,\,11\,\,4\,\,8\,\,1\,\,6\,\,12\,\,13\,\,14\,\,15\,\,16\] throughout this section. The \emph{plot} of $\pi$ is obtained by plotting the points $(i,\pi_i)$ for all $i\in[n]$. If $i\in[n-1]$ is a descent of $\pi$ (recall that this means $\pi_i>\pi_{i+1}$), then we call the point $(i,\pi_i)$ a \emph{descent top of the plot of} $\pi$. The left image in Figure \ref{Fig8} shows the plot of our example permutation. 

\begin{figure}[h]
  \centering
  \subfloat[]{\includegraphics[width=.38\linewidth]{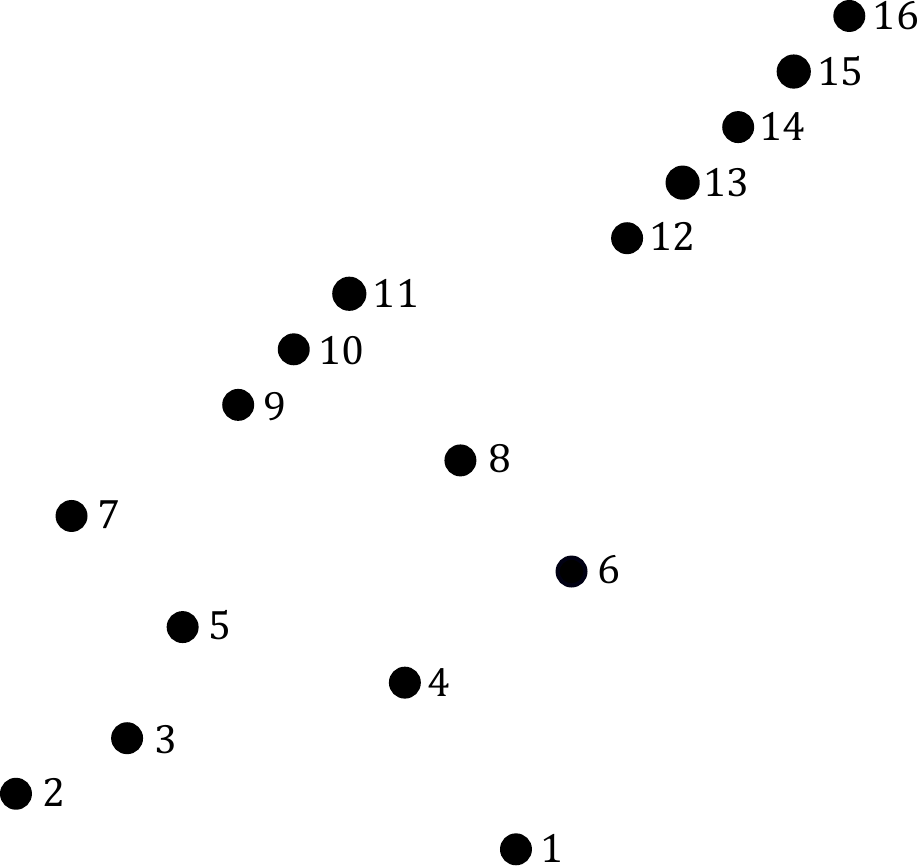}}
  \hspace{1.5cm}
  \subfloat[]{\includegraphics[width=.38\linewidth]{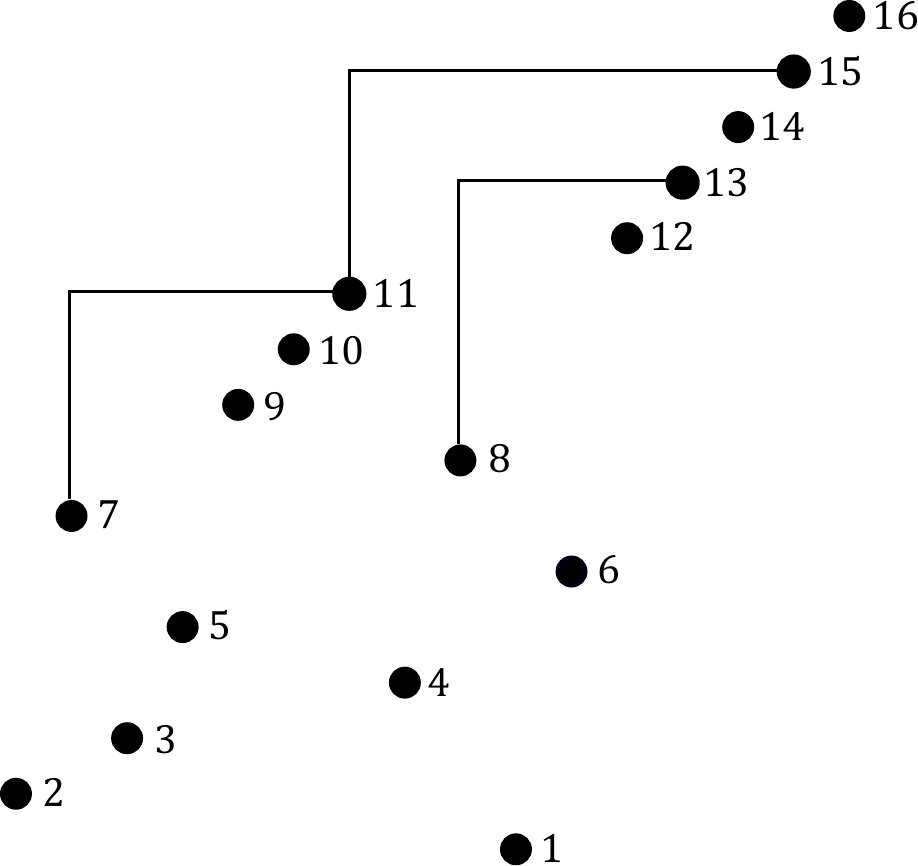}}
  \caption{The left image depicts the plot of the permutation $2\,\,7\,\,3\,\,5\,\,9\,\,10\,\,11\,\,4\,\,8\,\,1\,\,6\,\,12\,\,13\,\,14\,\,15\,\,16$. The right image shows a valid hook configuration of this permutation.}\label{Fig8}
\end{figure}

A \emph{hook} of $\pi$ is drawn by starting at a point $(i,\pi_i)$ in the plot of $\pi$, drawing a line segment vertically upward, and then drawing a line segment horizontally to the right until reaching another point $(j,\pi_j)$. This only makes sense if $i<j$ and $\pi_i<\pi_j$. The point $(i,\pi_i)$ is called the \emph{southwest endpoint} of the hook, while $(j,\pi_j)$ is called the \emph{northeast endpoint}. The right image in Figure \ref{Fig8} shows the plot of our example permutation with three hooks. The southwest endpoints of the hooks are $(2,7)$, $(7,11)$, $(9,8)$, and the corresponding northeast endpoints are $(7,11)$, $(15,15)$, $(13,13)$. 

\begin{figure}[t]
\begin{center}
\includegraphics[width=.66\linewidth]{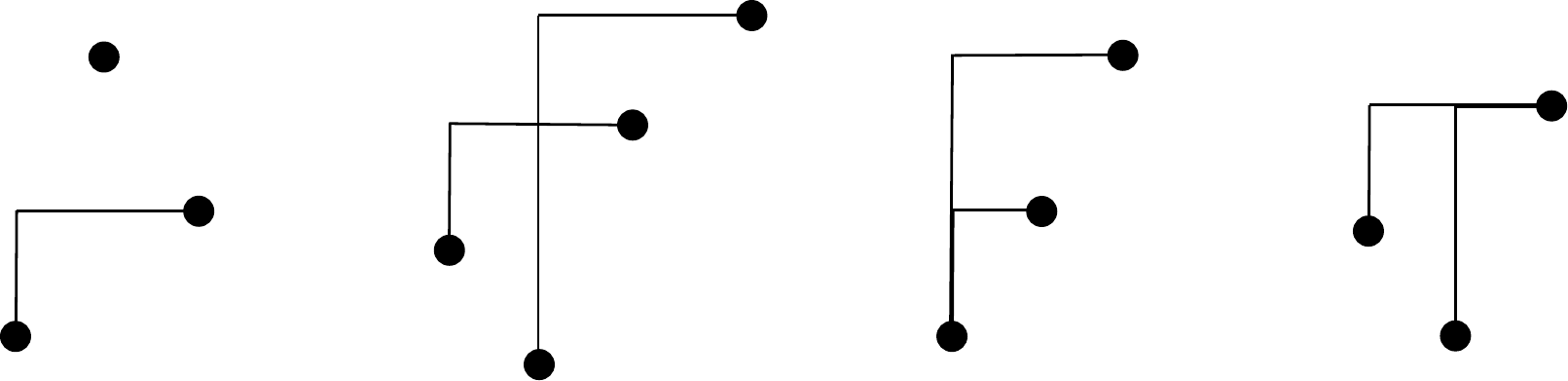}
\caption{Four placements of hooks that are forbidden in a valid hook configuration.}
\label{Fig9}
\end{center}  
\end{figure}

\begin{definition}\label{Def9}
Let $\pi$ be a permutation of length $n$ with $k$ descents. A \emph{valid hook configuration} of $\pi$ is a tuple $(H_1,\ldots,H_k)$ of hooks of $\pi$ subject to the following constraints: 

\begin{enumerate}[1.]
\item The southwest endpoints of the hooks are precisely the descent tops of the plot of $\pi$. 

\item A point in the plot of $\pi$ cannot lie directly above a hook. 

\item Hooks cannot intersect or overlap each other except in the case that the northeast endpoint of one hook is the southwest endpoint of the other. 
\end{enumerate}  
Let $\VHC(\pi)$ denote the set of valid hook configurations of $\pi$. We make the convention that a valid hook configuration includes its underlying permutation as part of its definition. In other words, $\VHC(\pi)$ and $\VHC(\pi')$ are disjoint whenever $\pi$ and $\pi'$ are distinct.
\end{definition}

A valid hook configuration of $\pi$ induces a coloring of the plot of $\pi$. To color the plot, draw a ``sky" over the entire diagram and assign a color to the sky. Assign arbitrary distinct colors other than the one used to color the sky to the $k$ hooks in the valid hook configuration. There are $k$ northeast endpoints of hooks, and these points remain uncolored. However, all of the other $n-k$ points will be colored. In order to decide how to color a point $(i,\pi_i)$ that is not a northeast endpoint, imagine that this point looks directly upward. If this point sees a hook when looking upward, it receives the same color as the hook that it sees. If the point does not see a hook, it must see the sky, so it receives the color of the sky. However, if $(i,\pi_i)$ is the southwest endpoint of a hook, then it must look around (on the left side of) the vertical part of that hook (see Figure \ref{Fig11}). 

To summarize, we started with a permutation $\pi$ with $k$ descents. We chose a valid hook configuration $(H_1,\ldots,H_k)$ of $\pi$ by drawing $k$ hooks according to Conditions 1, 2, and 3 in Definition \ref{Def9}. This valid hook configuration then induced a coloring of the plot of $\pi$. Specifically, $n-k$ points were colored, and $k+1$ colors were used (one for each hook and one for the sky). Let $q_i$ be the number of points given the same color as $H_i$, and let $q_0$ be the number of points given the same color as the sky. Then $(q_0,\ldots,q_k)$ is a composition\footnote{Throughout this paper, a \emph{composition of }$b$ \emph{into} $a$ \emph{parts} is an $a$-tuple of \emph{positive} integers whose sum is $b$.} of $n-k$ into $k+1$ parts; we say the valid hook configuration \emph{induces} this composition. Let $\mathcal V(\pi)$ be the set of compositions induced by valid hook configurations of $\pi$. We call the elements of $\mathcal V(\pi)$ the \emph{valid compositions} of $\pi$.  

\begin{figure}[t]
\begin{center}
\includegraphics[width=.42\linewidth]{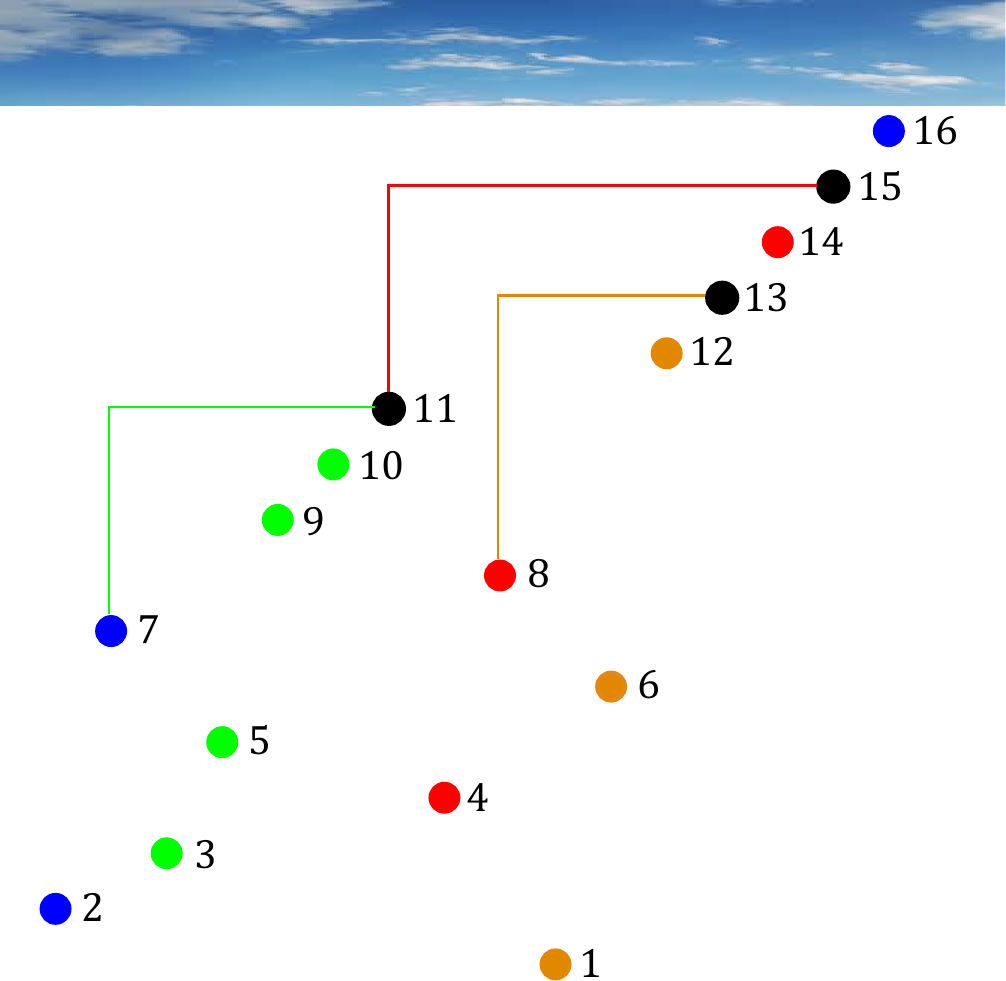}
\caption{The coloring induced by a valid hook configuration.}
\label{Fig11}
\end{center}  
\end{figure}

We frequently make tacit use of the following result, which is Lemma 3.1 in \cite{DefantPostorder}. 

\begin{theorem}[\!\!\cite{DefantPostorder}]\label{Thm9}
Each valid composition of a permutation $\pi$ is induced by a unique valid hook configuration of $\pi$. 
\end{theorem}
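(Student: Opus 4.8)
The plan is to prove Theorem~\ref{Thm9} by showing that a valid hook configuration of $\pi$ can be reconstructed uniquely from the composition it induces, so that the map ``valid hook configuration $\mapsto$ induced composition'' is injective on $\VHC(\pi)$. Since by definition every element of $\mathcal V(\pi)$ is induced by \emph{some} valid hook configuration, injectivity is exactly what gives the ``unique'' in the statement. The key structural observation is that the induced coloring records, for each point of the plot, which of the $k+1$ regions (one per hook, plus the sky) it belongs to, and the composition $(q_0,\ldots,q_k)$ records only the \emph{sizes} of these color classes. So the real content is that the sizes alone, together with the rigid geometry forced by Conditions 1--3 of Definition~\ref{Def9}, determine the hooks.

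First I would set up the reconstruction by reading the plot from left to right and using the defining constraints to pin down the hooks one at a time. The southwest endpoints are already forced: by Condition~1 they are exactly the descent tops of the plot of $\pi$, which depend only on $\pi$ and not on the configuration. Thus the entire unknown is the choice of northeast endpoint for each hook, i.e.\ how far to the right each hook extends. The plan is to argue that the color classes, scanned in the order the points appear, break the plot into contiguous blocks whose boundaries are exactly the hooks; because a point cannot lie directly above a hook (Condition~2) and hooks may only meet endpoint-to-endpoint (Condition~3), the color of a point changes in a predictable way as one crosses beneath a hook's horizontal segment. I would make this precise by induction on the descent tops ordered, say, by decreasing height or by position, peeling off the innermost/lowest hook first: its northeast endpoint is the first point to its right that is higher than its southwest endpoint and is not ``shadowed'' by an already-placed hook, and the number $q_i$ of same-colored points determines precisely where that hook must terminate.

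The main obstacle I anticipate is making the recovery argument genuinely local and order-independent, that is, showing that at each stage there is a \emph{canonical} next hook to reconstruct and that the count $q_i$ pins down its northeast endpoint without ambiguity. The subtlety is that the regions are nested in a forest-like way (a hook can sit underneath another hook's horizontal segment, giving the northeast-endpoint-equals-southwest-endpoint case of Condition~3), so one must argue that the coloring faithfully encodes this nesting and that no two distinct legal placements of the outstanding hooks can yield the same multiset of region sizes. I expect the cleanest route is to induct on the number of descents $k$: locate the hook whose southwest endpoint is the rightmost descent top (or the descent top of largest value, whichever makes the geometry cleanest), observe that Conditions~2 and~3 force its northeast endpoint to be the next point that is tall enough and unobstructed, and then show $q_i$ selects that point uniquely; remove that hook and its color class and apply the inductive hypothesis to the remaining configuration on a smaller plot. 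Verifying that deletion returns a valid hook configuration of a genuinely smaller instance, and that the induced composition of the reduced instance is determined by the original data, is where the careful bookkeeping lies.
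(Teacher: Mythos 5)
The first thing to note is that this paper contains no proof of Theorem \ref{Thm9}: it is quoted verbatim from \cite{DefantPostorder} (it is Lemma 3.1 there), so the only comparison available is against that source. Your high-level strategy is the right one and matches the natural argument: the southwest endpoints are forced to be the descent tops, so one must show the composition determines the northeast endpoints; this is done by induction on the number of descents, peeling off the hook $H_k$ whose southwest endpoint is the rightmost descent top $(d_k,\pi_{d_k})$, using the last part $q_k$ of the composition to locate its northeast endpoint $(j_k,\pi_{j_k})$, deleting that hook together with the points it colors, and recursing on the smaller instance. However, as written your proposal has a genuine gap at exactly the step that carries the content of the lemma, plus a false assertion.

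The false assertion is that ``Conditions 2 and 3 force its northeast endpoint to be the next point that is tall enough and unobstructed.'' They do not: for the rightmost descent top there are in general many admissible northeast endpoints, and the geometry alone singles out none of them. What actually pins down $j_k$ is the counting claim that the color class of $H_k$ is precisely the \emph{contiguous} block of points strictly between its two endpoints, so that $q_k=j_k-d_k-1$, i.e., $j_k=d_k+q_k+1$. This contiguity is the heart of the proof and your proposal only gestures at it (``shadowing,'' ``predictable color changes'') without proving it. It needs a concrete case analysis: if some hook $H_i$ with $i<k$ had its northeast endpoint strictly inside the region under $H_k$, its horizontal segment would cross the vertical line $x=d_k$ either at height below $\pi_{d_k}$ (then $(d_k,\pi_{d_k})$ lies directly above $H_i$, violating Condition 2) or at height strictly between $\pi_{d_k}$ and $\pi_{j_k}$ (then it crosses the vertical segment of $H_k$, violating Condition 3); the same dichotomy shows that any hook spanning over a point beneath $H_k$ must pass above $H_k$'s horizontal segment, so every point strictly between the endpoints really does see $H_k$, and no point there is an uncolored northeast endpoint. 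Relatedly, your hedging between ``peel off the innermost/lowest hook'' and ``the rightmost descent top'' is not a matter of taste: the parts $q_1,\ldots,q_k$ are indexed by the hooks in left-to-right order of their southwest endpoints, so only by processing the rightmost descent top do you know \emph{which} part of the composition (the last one) to consume; if you peeled off the lowest hook you could not tell from the composition alone which $q_i$ belongs to it, and injectivity would not follow. Finally, a bookkeeping point you flag but leave open: when deleting $H_k$ you must delete positions $d_k+1,\ldots,j_k$, including the uncolored northeast endpoint, since otherwise that leftover point acquires a color in the reduced configuration and the induced composition is no longer $(q_0,\ldots,q_{k-1})$.
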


The next theorem, which follows from the results in Section 5 of \cite{DefantPostorder}, has proven useful in \cite{DefantCounting,DefantPreimages,DefantClass}. 
Let $L(r,i,j)$ be the number of binary plane trees with $r$ vertices, $i-1$ right edges, and $j$ leaves. Let $L_r(x,y)=\sum_{i=1}^r\sum_{j=1}^rL(r,i,j)x^iy^j$. Let $C_r=L_r(1,1)=\frac{1}{r+1}{2r\choose r}$ be the $r^\text{th}$ Catalan number.

\begin{theorem}[\!\!\cite{DefantPostorder}]\hspace{-.15cm}\footnote{The article \cite{DefantPostorder} gives a general construction that allows one to produce decreasing plane trees of various types that have a specified permutation as their postorder readings. This leads to numerous analogues and generalizations of Theorem \ref{Thm10}. For example, a very special consequence of Theorem 4.1 in that article is that the number of decreasing Motzkin trees with postorder $\pi$ is $\displaystyle\sum_{(q_0,\ldots,q_k)\in\mathcal V(\pi)}\prod_{t=0}^kM_{q_t-1}$, where $M_r$ is the $r^\text{th}$ Motzkin number.}\label{Thm10}
Let $\pi=\pi_1\cdots\pi_n$ be a permutation with $\des(\pi)=k$. We have \[\sum_{\sigma\in s^{-1}(\pi)}x^{\des(\sigma)+1}y^{\peak(\sigma)+1}=\sum_{(q_0,\ldots,q_k)\in\mathcal V(\pi)}\prod_{t=0}^kL_{q_t}(x,y).\] In particular, \[|s^{-1}(\pi)|=\sum_{(q_0,\ldots,q_k)\in\mathcal V(\pi)}\prod_{t=0}^kC_{q_t}.\]
\end{theorem}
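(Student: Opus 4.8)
The plan is to count the decreasing binary plane trees with postorder $\pi$, graded by two tree statistics, and then to organize that count according to valid hook configurations. First I would use the identity $s=P\circ I^{-1}$ from \eqref{Eq6} to trade the sum over $s^{-1}(\pi)$ for a sum over trees: writing $T=I^{-1}(\sigma)$, the map $\sigma\mapsto T$ is a bijection from $s^{-1}(\pi)$ onto $\{T\in\DPT^{(2)}:P(T)=\pi\}$, with $\sigma=I(T)$. Next I would convert the two statistics of $\sigma$ into statistics of $T$. The descent characterization recalled in Section~\ref{Sec:Stats} says that $i$ is a descent of $I(T)$ if and only if the $i^{\text{th}}$ vertex visited in-order has a right child, so $\des(\sigma)$ equals the number of right edges of $T$. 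An entirely analogous in-order analysis shows that $i$ is a peak of $I(T)$ if and only if the $i^{\text{th}}$ in-order vertex has both a left and a right child; since every binary plane tree satisfies (number of leaves) $=$ (number of vertices with two children) $+\,1$, this gives $\peak(\sigma)+1=$ (number of leaves of $T$). Denoting by $e(T)$ and $\ell(T)$ the numbers of right edges and leaves of $T$, the left-hand side becomes
\[
\sum_{\sigma\in s^{-1}(\pi)}x^{\des(\sigma)+1}y^{\peak(\sigma)+1}=\sum_{T}x^{e(T)+1}y^{\ell(T)},
\]
where $T$ ranges over all decreasing binary plane trees with $P(T)=\pi$.

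The core step is a statistic-preserving decomposition. I would construct a bijection between $\{T\in\DPT^{(2)}:P(T)=\pi\}$ and the set of pairs $(\mathcal H,(\beta_0,\dots,\beta_k))$, where $\mathcal H$ is a valid hook configuration of $\pi$ inducing a composition $(q_0,\dots,q_k)\in\mathcal V(\pi)$ and each $\beta_t$ is an unlabeled binary plane tree on $q_t$ vertices. In the forward direction one attaches to $T$ its canonical valid hook configuration---whose hooks record, at each descent top of $\pi$, the place where the corresponding piece of $T$ attaches---and then reads off the shape that $T$ induces on each of the $k+1$ color classes. Theorem~\ref{Thm9} guarantees that distinct valid hook configurations induce distinct compositions, so nothing is overcounted. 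For the inverse, the decreasing-label condition together with the requirement $P(T)=\pi$ forces every label once the hook configuration and the $k+1$ shapes are fixed, so $T$ is recovered uniquely.

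Finally I would track the two statistics through this bijection. The two facts I would need are \emph{leaf-locality}---a colored vertex is a leaf of $T$ exactly when it is a leaf of its zone shape $\beta_t$, because the only edges leaving a color class run upward into the (uncolored) northeast endpoints of hooks, so that gluing never destroys a within-zone leaf and $\ell(T)=\sum_t\ell(\beta_t)$---and the \emph{right-edge split}---the right edges of $T$ are precisely the within-zone right edges together with exactly one cross-class right edge per hook, so that $e(T)=k+\sum_t e(\beta_t)$. Writing $i_t-1=e(\beta_t)$ and $j_t=\ell(\beta_t)$, the contribution of a tree factors as $\prod_{t=0}^k x^{i_t}y^{j_t}$; summing over the independent choices of $\beta_t$ in each zone produces $\prod_{t=0}^k L_{q_t}(x,y)$, and summing over $\mathcal V(\pi)$ gives the first identity. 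Specializing $x=y=1$ and using $L_r(1,1)=C_r$ yields the fertility formula.

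I expect the main obstacle to be the third paragraph: establishing leaf-locality and the exact ``$+k$'' count of cross-class right edges. These require the precise geometry of valid hook configurations---how a color class embeds as a sub-shape of $T$, and how the northeast endpoints of the hooks serve as the unique attaching points between zones---which is exactly the content that must be imported and adapted from Section~5 of \cite{DefantPostorder}. By contrast, the reduction to trees and the translation of $\des$ and $\peak$ into right edges and leaves are routine once the in-order characterizations are in hand.
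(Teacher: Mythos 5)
First, a point of order: the paper never proves Theorem \ref{Thm10}; it is imported from \cite{DefantPostorder} (``follows from the results in Section 5'' of that article), so there is no internal proof to compare against and your proposal must stand on its own. Your first two paragraphs do stand: the reduction to trees via $s=P\circ I^{-1}$ is exactly right, and the translations $\des(\sigma)=$ (number of right edges of $T$) and $\peak(\sigma)+1=$ (number of leaves of $T$) are correct --- the in-order characterization of peaks as vertices with two children, combined with the identity (leaves) $=$ (vertices with two children) $+\,1$, is sound.

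The gap is in your third paragraph, and it is not merely a deferred technicality: both structural claims on which your counting rests are false as stated. The color classes of a valid hook configuration are \emph{not} induced sub-shapes of $T$ glued only at northeast endpoints; a colored vertex can have a northeast endpoint as its \emph{child}, so edges leave a color class downward as well as upward. Concretely, take $\pi=21345$ (so $k=1$, descent top $(1,2)$) and $\sigma=54231\in s^{-1}(\pi)$, so that $T=I^{-1}(54231)$ is the tree in which $5$ has right child $4$, $4$ has right child $3$, and $3$ has left child $2$ and right child $1$. The parent of the descent top $2$ is $3$, so the hook runs from $(1,2)$ to $(3,3)$; the sky class is $\{2,4,5\}$ and the hook class is $\{1\}$. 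Here $T$ has three right edges, but only one right edge lies inside a color class (namely $5\to 4$), so your split $e(T)=(\text{within-zone right edges})+k$ gives $2\neq 3$: there are \emph{two} cross-class right edges ($4\to 3$ and $3\to 1$), not one per hook. Likewise the induced subgraph on the sky class ($5\to 4$ together with the isolated vertex $2$) is not even a tree, and $4$ would be a ``leaf'' of it although it is not a leaf of $T$, so leaf-locality fails under this reading. What makes the theorem true is that the zone shapes $\beta_t$ are defined by \emph{contraction}, not restriction: every northeast endpoint $w$ has a nonempty left subtree rooted at the descent top of its hook and a nonempty right subtree (the entries strictly between the hook's endpoints), and one replaces the subtree rooted at $w$ by the contracted left subtree of $w$. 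Since these two subtrees are never empty, contraction preserves, for every surviving vertex, whether it has a left or right child; the zone-to-northeast-endpoint edges are thereby absorbed into the shapes, the $k$ leftover right edges are exactly the edges from northeast endpoints to their right children, and only then do your formulas $\ell(T)=\sum_t\ell(\beta_t)$ and $e(T)=k+\sum_t e(\beta_t)$ hold. This contraction mechanism --- the actual content of Section 5 of \cite{DefantPostorder} --- is the missing idea, and the heuristic picture you substitute for it would not survive contact with the example above.
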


We end this section by defining and discussing strong fertility Wilf equivalence. Let us first fix some simple terminology and notation. A \emph{partition} is a composition whose parts appear in nonincreasing order. The \emph{type} of a composition $c$ is the partition obtained by rearranging the parts of $c$ into nonincreasing order. For instance, the type of $(1,3,4,1)$ is $(4,3,1,1)$. Define the \emph{type} of a valid hook configuration $\mathcal H$ to be the type of the valid composition induced by $\mathcal H$. For example, the valid hook configuration in Figure \ref{Fig11} induces the valid composition $(3,4,3,3)$, so it has type $(4,3,3,3)$. If $B$ and $B'$ are sets of valid hook configurations, we say a function $\theta:B\to B'$ is \emph{type-preserving} if $\mathcal H$ and $\theta(\mathcal H)$ have the same type for every $\mathcal H\in B$. Given a set $A$ of permutations, let $\VHC(A)=\bigcup_{\pi\in A}\VHC(\pi)$ be the set of valid hook configurations of the elements of $A$. 

\begin{definition}\label{Def10}
We say the permutation classes $\Av(\tau^{(1)},\tau^{(2)},\ldots)$ and $\Av(\tau'^{(1)},\tau'^{(2)},\ldots)$ are \emph{strongly fertility Wilf equivalent} if there exists a type-preserving bijection \[\theta:\VHC(\Av(\tau^{(1)},\tau^{(2)},\ldots))\to\VHC(\Av(\tau'^{(1)},\tau'^{(2)},\ldots)).\]
\end{definition}

\begin{remark}\label{Rem6}
If $\theta$ is as in Definition \ref{Def10}, then \[\theta(\VHC(\Av_n(\tau^{(1)},\tau^{(2)},\ldots)))=\VHC(\Av_n(\tau'^{(1)},\tau'^{(2)},\ldots))\] for all $n\geq 0$. Indeed, suppose $\mathcal H\in\VHC(\Av(\tau^{(1)},\tau^{(2)},\ldots))$. Let $\pi$ and $\pi'$ be the underlying permutations of $\mathcal H$ and $\theta(\mathcal H)$, respectively (these are uniquely determined according to the last part of Definition \ref{Def9}). Suppose $\pi\in S_n$ and $\pi'\in S_{n'}$. Let $k=\des(\pi)$ and $k'=\des(\pi')$. By our discussion above, the valid composition induced by $\mathcal H$ is a composition of $n-k$ into $k+1$ parts. Similarly, the valid composition induced by $\theta(\mathcal H)$ is a composition of $n'-k'$ into $k'+1$ parts. Because $\theta$ is type-preserving, we must have $n-k=n'-k'$ and $k+1=k'+1$. Hence, $n=n'$. \hspace*{\fill}$\lozenge$ 
\end{remark}

\begin{proposition}\label{Prop5}
Let $\Av(\tau^{(1)},\tau^{(2)},\ldots)$ and $\Av(\tau'^{(1)},\tau'^{(2)},\ldots)$ be permutation classes that are strongly fertility Wilf equivalent. For every $n\geq 0$, the statistics $\des$ and $\peak$ are jointly equidistributed on $s^{-1}(\Av_n(\tau^{(1)},\tau^{(2)},\ldots))$ and $s^{-1}(\Av_n(\tau'^{(1)},\tau'^{(2)},\ldots))$. In particular, $\Av(\tau^{(1)},\tau^{(2)},\ldots,)$ and $\Av(\tau'^{(1)},\tau'^{(2)},\ldots,)$ are fertility Wilf equivalent. 
\end{proposition}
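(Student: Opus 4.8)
The plan is to translate the asserted joint equidistribution of $\des$ and $\peak$ into an equality of two-variable generating functions and then to read off that equality from the type-preserving bijection $\theta$ together with Theorems \ref{Thm9} and \ref{Thm10}. The first step is to observe that, for finite sets of permutations $A$ and $A'$, joint equidistribution of $\des$ and $\peak$ is equivalent to the polynomial identity
\[\sum_{\sigma\in A}x^{\des(\sigma)+1}y^{\peak(\sigma)+1}=\sum_{\sigma\in A'}x^{\des(\sigma)+1}y^{\peak(\sigma)+1},\]
since this bivariate generating function records, for each pair of statistic values, exactly how many permutations realize it (and the harmless $+1$ shifts just multiply both sides by $xy$). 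Hence it suffices to establish this identity with $A=s^{-1}(\Av_n(\tau^{(1)},\tau^{(2)},\ldots))$ and $A'=s^{-1}(\Av_n(\tau'^{(1)},\tau'^{(2)},\ldots))$.

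Next I would decompose the preimage set as the disjoint union $s^{-1}(\Av_n(\tau^{(1)},\tau^{(2)},\ldots))=\bigsqcup_{\pi\in\Av_n(\tau^{(1)},\tau^{(2)},\ldots)}s^{-1}(\pi)$ and apply Theorem \ref{Thm10} to each $\pi$ (with $k=\des(\pi)$), obtaining
\[\sum_{\sigma\in s^{-1}(\Av_n(\tau^{(1)},\tau^{(2)},\ldots))}x^{\des(\sigma)+1}y^{\peak(\sigma)+1}=\sum_{\pi\in\Av_n(\tau^{(1)},\tau^{(2)},\ldots)}\sum_{(q_0,\ldots,q_k)\in\mathcal V(\pi)}\prod_{t=0}^kL_{q_t}(x,y).\]
By Theorem \ref{Thm9}, the valid compositions of $\pi$ are in bijection with the valid hook configurations of $\pi$, so the inner double sum is in fact a single sum over $\VHC(\Av_n(\tau^{(1)},\tau^{(2)},\ldots))$, where each valid hook configuration $\mathcal H$ contributes the weight $\prod_{t=0}^k L_{q_t}(x,y)$ determined by the valid composition it induces.

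The crucial observation is that this weight is symmetric in the parts $q_0,\ldots,q_k$, so it depends only on the multiset of parts, i.e., only on the type of $\mathcal H$. Writing $W(\lambda)=\prod_i L_{\lambda_i}(x,y)$ for a partition $\lambda$, the generating function becomes $\sum_{\mathcal H\in\VHC(\Av_n(\tau^{(1)},\tau^{(2)},\ldots))}W(\mathrm{type}(\mathcal H))$. Now I would invoke Remark \ref{Rem6}, which guarantees that $\theta$ restricts to a bijection from $\VHC(\Av_n(\tau^{(1)},\tau^{(2)},\ldots))$ onto $\VHC(\Av_n(\tau'^{(1)},\tau'^{(2)},\ldots))$, together with the defining property $\mathrm{type}(\mathcal H)=\mathrm{type}(\theta(\mathcal H))$. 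Since each summand $W(\mathrm{type}(\mathcal H))=W(\mathrm{type}(\theta(\mathcal H)))$ is unchanged under $\theta$, reindexing along $\theta$ converts the sum into $\sum_{\mathcal H'\in\VHC(\Av_n(\tau'^{(1)},\tau'^{(2)},\ldots))}W(\mathrm{type}(\mathcal H'))$. Running the previous two steps backwards on the primed class then recovers the preimage generating function for $\Av_n(\tau'^{(1)},\tau'^{(2)},\ldots)$, which gives the desired identity and hence the joint equidistribution; specializing $x=y=1$ (equivalently, taking cardinalities of the two equidistributed sets) yields fertility Wilf equivalence.

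I do not expect a serious obstacle: the argument is essentially bookkeeping with generating functions. The one point requiring genuine care is recognizing that the per-configuration weight in Theorem \ref{Thm10} depends on a valid composition only through its type, which is precisely the invariant preserved by $\theta$. This is what allows the \emph{aggregate} sum over all valid hook configurations at level $n$ to transport across $\theta$, even though $\theta$ need not match the valid hook configurations of a single permutation $\pi$ with those of a single permutation $\pi'$.
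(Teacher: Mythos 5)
Your proposal is correct and follows essentially the same route as the paper's proof: both convert the bivariate $(\des,\peak)$ generating function of the preimage set into a sum of weights $\prod_t L_{q_t}(x,y)$ over valid hook configurations via Theorems \ref{Thm9} and \ref{Thm10}, observe that this weight depends only on the type of the induced valid composition, and then transport the sum across $\theta$ using Remark \ref{Rem6}. Your closing remark about the weight being symmetric in the parts (hence type-determined) and about $\theta$ only needing to work in aggregate rather than permutation-by-permutation is exactly the point the paper's proof exploits.
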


\begin{proof}
Fix $n\geq 0$. Let \[\theta:\VHC(\Av(\tau^{(1)},\tau^{(2)},\ldots))\to\VHC(\Av(\tau'^{(1)},\tau'^{(2)},\ldots))\] be the type-preserving bijection whose existence is guaranteed by Definition \ref{Def10}. Remark \ref{Rem6} tells us that $\theta(\VHC(\Av_n(\tau^{(1)},\tau^{(2)},\ldots)))=\VHC(\Av_n(\tau'^{(1)},\tau'^{(2)},\ldots))$. Let $L_r(x,y)$ be as in Theorem~\ref{Thm10}. Given a composition $q=(q_0,\ldots,q_k)$, let \[L_q(x,y)=\prod_{t=0}^kL_{q_t}(x,y).\] Let $q^{\mathcal H}$ denote the valid composition induced by a valid hook configuration $\mathcal H$. Because $\theta$ is type-preserving, we have $L_{q^{\mathcal H}}(x,y)=L_{q^{\theta(\mathcal H)}}(x,y)$ for all $\mathcal H\in\VHC(\Av_n(\tau^{(1)},\tau^{(2)},\ldots))$. Invoking Theorems \ref{Thm9} and \ref{Thm10}, we find that 
\begin{align*}
\sum_{\sigma\in s^{-1}(\Av_n(\tau^{(1)},\tau^{(2)},\ldots))}x^{\des(\sigma)+1}y^{\peak(\sigma)+1}&=\sum_{\mathcal H\in\VHC(\Av_n(\tau^{(1)},\tau^{(2)},\ldots))}L_{q^{\mathcal H}}(x,y) \\ 
&=\sum_{\mathcal H\in\VHC(\Av_n(\tau^{(1)},\tau^{(2)},\ldots))}L_{q^{\theta(\mathcal H)}}(x,y) \\ 
&=\sum_{\mathcal H'\in\VHC(\Av_n(\tau'^{(1)},\tau'^{(2)},\ldots))}L_{q^{\mathcal H'}}(x,y) \\ 
&=\sum_{\sigma'\in s^{-1}(\Av_n(\tau'^{(1)},\tau'^{(2)},\ldots))}x^{\des(\sigma')+1}y^{\peak(\sigma')+1}. \qedhere
\end{align*}   
\end{proof}

\section{Main Results}\label{Sec:Main}

In this section, we define the sliding operators $\swu$ and $\swl$. We use $\swu$ to produce infinitely many examples of pairs of postorder Wilf equivalent permutation classes, recovering Bouvel and Guibert's result concerning $s^{-1}(\Av(231))$ and $s^{-1}(\Av(132))$ as a special consequence. We then use $\swl$ to produce infinitely many examples of pairs of strongly fertility Wilf equivalent permutation classes, proving the conjectured identity \eqref{Eq4} as a special consequence. We end this section with a discussion of the Zeilberger statistic and its joint equidistribution with $\des$ and $\peak$ on certain sets.  

For $\pi\in S_n$, let $\rot(\pi)$ (respectively, $\rot^{-1}(\pi)$) be the permutation whose plot is obtained by rotating the plot of $\pi$ counterclockwise (respectively, clockwise) by $90^\circ$. Equivalently, $\rot(\pi)$ is the reverse of the inverse of $\pi$. If $\lambda=\lambda_1\cdots\lambda_\ell\in S_\ell$ and $\mu=\mu_1\ldots\mu_m\in S_m$, then the \emph{direct sum} of $\lambda$ and $\mu$, denoted $\lambda\oplus\mu$, is the permutation in $S_{\ell+m}$ obtained by ``placing $\mu$ above and to the right of $\lambda$." The \emph{skew sum} of $\lambda$ and $\mu$, denoted $\lambda\ominus\mu$, is the permutation in $S_{\ell+m}$ obtained by ``placing $\mu$ below and to the right of $\lambda$." More formally, the $i^\text{th}$ entries of $\lambda\oplus\mu$ and $\lambda\ominus\mu$, respectively, are \[(\lambda\oplus\mu)_i=\begin{cases} \lambda_i & \mbox{if } 1\leq i\leq \ell; \\ \mu_{i-\ell}+\ell & \mbox{if } \ell+1\leq i\leq \ell+m \end{cases}\hspace{.33cm} \text{and}\hspace{.33cm}(\lambda\ominus\mu)_i=\begin{cases} \lambda_i+m & \mbox{if } 1\leq i\leq \ell; \\ \mu_{i-\ell} & \mbox{if } \ell+1\leq i\leq \ell+m. \end{cases}\] Note that $\oplus$ and $\ominus$ are both associative operations on the set of normalized permutations. We say a normalized permutation is \emph{sum indecomposable} if it cannot be written as the direct sum of two shorter permutations. We say a normalized permutation is \emph{skew indecomposable} if it cannot be written as the skew sum of two shorter permutations. 

\begin{definition}\label{Def8}
If $\pi$ is the empty permutation, then $\swu(\pi)=\pi$. If $\pi\in \Av_n(231)$ for some $n\geq 1$, then we can write $\pi=L\oplus (1\ominus R)$ for some normalized permutations $L$ and $R$. We let \[\swu(\pi)=(\swu(L)\oplus 1)\ominus\swu(R).\] For $\pi\in\Av(132)$, let \[\swl(\pi)=\rot^{-1}(\swu(\rot(\pi))).\]
\end{definition}

\begin{remark}\label{Rem7}
The name ``$\swu$" stands for ``southwest up" because $\swu$ has the effect of sliding up points in the southwest region of the plot of $\pi$. Similarly, ``$\swl$" stands for ``southwest left." It is sometimes helpful to extend the definitions of $\swu$ and $\swl$ to permutations of arbitrary sets of positive integers. 
If $\pi$ is a $231$-avoiding permutation of a set $X$ of positive integers and $\pi'$ is the normalization of $\pi$, then we define $\swu(\pi)$ to be the unique permutation of $X$ whose normalization is $\swu(\pi')$. We define $\swl$ on arbitrary $132$-avoiding permutations similarly.  \hspace*{\fill}$\lozenge$ 
\end{remark}

The article \cite{DefantCatalan} discusses and proves several properties of the sliding operators $\swu$ and $\swl$. In the following lemma, we simply state the properties that we will need later. We omit the proofs because they either appear in \cite{DefantCatalan} or are immediate from the definitions we have given. Recall the definitions of $\Des(\pi)$, $\des(\pi)$, and $\tl(\pi)$ from Section \ref{Sec:Stats}. 

\begin{lemma}\label{Lem1}
The maps $\swu$ and $\swl$ have the following properties: 

\begin{itemize}
\item The map $\swu:\Av(231)\to\Av(132)$ is bijective. 
\item The map $\swl:\Av(132)\to\Av(312)$ is bijective. 
\item The restriction of $\swl$ to $\Av(132,231)$ is a bijection from $\Av(132,231)$ to $\Av(231,\!312)$. 
\item We have $\tl(\pi)=\tl(\swu(\pi))$ and $\Des(\pi)=\Des(\swu(\pi))$ for every $\pi\in\Av(231)$. 
\item We have $\tl(\pi)=\tl(\swl(\pi))$ and $\des(\pi)=\des(\swl(\pi))$ for every $\pi\in\Av(132)$.
\end{itemize}
\end{lemma}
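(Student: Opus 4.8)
The plan is to base everything on the recursive structure of the three classes and to reduce the claims about $\swl$ to claims about $\swu$ via the rotation $\rot$. The starting observation is that every nonempty $\pi\in\Av(231)$ has a unique decomposition $\pi=L\oplus(1\ominus R)$ obtained by cutting at the maximum entry $n$ (with $L$ the block before $n$ and $R$ the block after), where $L,R\in\Av(231)$ are shorter; dually, every nonempty element of $\Av(132)$ has a unique decomposition $(L'\oplus 1)\ominus R'$ at its maximum, with $L',R'\in\Av(132)$. First I would show by induction on length that $\swu$ is well defined and maps $\Av(231)$ into $\Av(132)$: the recursion $\swu(\pi)=(\swu(L)\oplus 1)\ominus\swu(R)$ produces exactly the canonical $\Av(132)$-shape, so $\swu(\pi)\in\Av(132)$ once $\swu(L),\swu(R)\in\Av(132)$. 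Bijectivity then follows by writing down the inverse recursion $\phi\big((L'\oplus 1)\ominus R'\big)=\phi(L')\oplus(1\ominus\phi(R'))$ on $\Av(132)$ and checking $\phi\circ\swu=\Id=\swu\circ\phi$ by induction.

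For the two bullets involving $\swl$, I would first record how $\rot$ interacts with the sum operations: $\rot(\lambda\oplus\mu)=\rot(\mu)\ominus\rot(\lambda)$ and $\rot(\lambda\ominus\mu)=\rot(\lambda)\oplus\rot(\mu)$. These follow from $\rot(\nu)=\operatorname{rev}(\nu^{-1})$ together with the inverse formulas for $\oplus$ and $\ominus$; the order of the summands is genuinely easy to get backwards, so I would verify them on a small example. Since $\rot(132)=231$ and $\rot^{-1}(132)=312$, the map $\rot$ restricts to a bijection $\Av(132)\to\Av(231)$ and $\rot^{-1}$ restricts to a bijection $\Av(132)\to\Av(312)$; because $\swl=\rot^{-1}\circ\swu\circ\rot$ and $\swu\colon\Av(231)\to\Av(132)$ is a bijection, this immediately gives that $\swl\colon\Av(132)\to\Av(312)$ is a bijection. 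Transporting the $\swu$-recursion through these rules yields the explicit recursion $\swl(\pi)=\swl(B)\oplus(\swl(A)\ominus 1)$, where $\pi=A\ominus(B\oplus 1)$ is the image under $\rot^{-1}$ of the canonical decomposition of $\rot(\pi)$, with $A,B\in\Av(132)$ shorter than $\pi$.

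Next I would prove the statistic identities by induction on the recursions. For $\swu$, tracking descents through $\pi=L\oplus(1\ominus R)$ shows $\Des(\pi)=\Des(L)\cup\{|L|+1\}\cup(\Des(R)+|L|+1)$, with the middle term present exactly when $R\neq\varnothing$; the output $(\swu(L)\oplus 1)\ominus\swu(R)$ has descents in the identical positions, so $\Des(\pi)=\Des(\swu(\pi))$ follows inductively. For the tail length, $\tl(\pi)=0$ when $R\neq\varnothing$ (the maximum is not last) and $\tl(\pi)=1+\tl(L)$ when $R=\varnothing$, and the same dichotomy holds for $\swu(\pi)$, giving $\tl=\tl\circ\swu$. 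The case of $\swl$ is the subtle one: the boundary descent sits between the two recursive blocks of $\pi=A\ominus(B\oplus 1)$ but migrates to the penultimate position in $\swl(\pi)=\swl(B)\oplus(\swl(A)\ominus 1)$, so the descent \emph{set} is not preserved; however each recursive step contributes $\des(A)+\des(B)+1$ to the descent count on both sides, so the descent \emph{count} is preserved, which is precisely why this bullet asserts only $\des$ rather than $\Des$. The tail length is handled by the same dichotomy as for $\swu$. Throughout, the degenerate cases $A=\varnothing$ (respectively $L=\varnothing$ or $R=\varnothing$) must be checked separately, since they suppress the boundary descent.

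Finally, for the claim that $\swl$ restricts to a bijection $\Av(132,231)\to\Av(231,312)$, it suffices, since $\swl\colon\Av(132)\to\Av(312)$ is already a bijection, to show that $\pi\in\Av(231)$ if and only if $\swl(\pi)\in\Av(231)$ for every $\pi\in\Av(132)$. I would prove this by induction using two structural observations: $X\ominus Y$ avoids $231$ iff $X$ is decreasing and $Y$ avoids $231$, while $X\oplus Y$ avoids $231$ iff both $X$ and $Y$ do. Applying these to $\pi=A\ominus(B\oplus 1)$ and to $\swl(\pi)=\swl(B)\oplus(\swl(A)\ominus 1)$ reduces the equivalence to two facts: that $A$ is decreasing iff $\swl(A)$ is decreasing, and that $B\in\Av(231)$ iff $\swl(B)\in\Av(231)$. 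The former holds because $\swl$ fixes every decreasing permutation (a one-line induction, since a decreasing $\pi$ has $B=\varnothing$ and decreasing $A$), and the latter is the inductive hypothesis applied to the shorter permutation $B$. I expect this last bullet to be the main obstacle, both because it requires carrying the auxiliary pattern $231$ through the induction and because the structural sum-avoidance lemmas together with the degenerate cases demand the most careful bookkeeping.
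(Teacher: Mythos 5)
Your proposal is correct, but there is nothing in the paper to compare it against line by line: the paper states Lemma \ref{Lem1} without proof, asserting that the properties ``either appear in \cite{DefantCatalan} or are immediate from the definitions we have given.'' Your argument is thus a self-contained substitute for that citation, and it checks out. The key ingredients are all sound: the unique decompositions $\pi=L\oplus(1\ominus R)$ for $\Av(231)$ and $(L'\oplus 1)\ominus R'$ for $\Av(132)$ obtained by cutting at the maximum entry; the rotation rules $\rot(\lambda\oplus\mu)=\rot(\mu)\ominus\rot(\lambda)$ and $\rot(\lambda\ominus\mu)=\rot(\lambda)\oplus\rot(\mu)$, which you apply correctly to transport the $\swu$-recursion to $\swl(\pi)=\swl(B)\oplus(\swl(A)\ominus 1)$ for $\pi=A\ominus(B\oplus 1)$; the descent bookkeeping, which rightly isolates why $\swl$ preserves only $\des$ and not $\Des$ (the boundary descent migrates from position $|A|$ to position $|A|+|B|$, exactly matching the weaker claim in the fifth bullet); and the reduction of the third bullet to the two facts that $\swl$ fixes decreasing permutations (hence, by injectivity, $A$ is decreasing iff $\swl(A)$ is) and that avoidance of $231$ is preserved inductively on the shorter block $B$. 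One small caveat you should state explicitly: your structural lemma ``$X\ominus Y$ avoids $231$ iff $X$ is decreasing and $Y$ avoids $231$'' is false when $Y$ is empty (take $X=312$), so you should record that in both of its applications ($Y=B\oplus 1$ and $Y=1$) the second block is nonempty. With that sentence added, your proof is complete and is presumably close in spirit to the omitted argument in \cite{DefantCatalan}.
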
 

\begin{remark}
Since $\swu$ and $\swl$ are bijections, they have inverses $\swu^{-1}$ and $\swl^{-1}$. These maps are called $\swd$ and $\swr$ in \cite{DefantCatalan}, but we will not use these names here.  \hspace*{\fill}$\lozenge$ 
\end{remark}

Our primary motivation for considering the map $\swu$ comes from the following proposition, which is proven in \cite{DefantPolyurethane} using polyurethane toggles. 

\begin{proposition}\label{Prop2}
For every $\pi\in\Av(231)$, there is a skeleton-preserving bijection \[\eta_\pi:P^{-1}(\pi)\to P^{-1}(\swu(\pi)).\] 
\end{proposition}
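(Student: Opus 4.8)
The plan is to exploit a rigidity in the definition of postorder: once the skeleton of a decreasing plane tree is fixed, the postorder reading determines the entire labeling. First I would record this precisely. Fix a rooted plane tree $\Sigma$ with $n$ vertices. The postorder traversal visits the vertices of $\Sigma$ in a fixed order, so there is exactly one way to label $\Sigma$ so that its postorder reading equals a prescribed word $\pi=\pi_1\cdots\pi_n$, namely by assigning $\pi_i$ to the $i^{\text{th}}$ vertex visited. This labeling lies in $P^{-1}(\pi)$ if and only if it satisfies the decreasing condition. Consequently the map $T\mapsto(\text{skeleton of }T)$ is a bijection from $P^{-1}(\pi)$ onto the set $\mathcal S(\pi)$ of skeletons on $n$ vertices whose postorder-$\pi$ labeling is decreasing. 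Since $\swu$ preserves length, a skeleton-preserving bijection $\eta_\pi\colon P^{-1}(\pi)\to P^{-1}(\swu(\pi))$ exists as soon as $\mathcal S(\pi)=\mathcal S(\swu(\pi))$; one may then simply take $\eta_\pi$ to be the identity on skeletons. So the entire proposition reduces to the set equality $\mathcal S(\pi)=\mathcal S(\swu(\pi))$.

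The second step is to reformulate membership in $\mathcal S(\pi)$ so that it depends only on the comparisons among the entries of $\pi$. Because postorder visits the root of each subtree last, and because the vertices of a subtree occupy a contiguous block of postorder positions, the decreasing condition on the postorder-$\pi$ labeling of $\Sigma$ is equivalent to requiring that for every subtree, the entry of $\pi$ in the last position of its block is the largest entry in that block. Thus $\Sigma\in\mathcal S(\pi)$ if and only if every \emph{subtree window} $[a,b]$ of $\Sigma$ (the interval of postorder positions occupied by a subtree, whose right endpoint $b$ is the position of that subtree's root) is \emph{good for} $\pi$, meaning $\pi_b>\pi_j$ for all $a\le j<b$. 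The good windows of $\pi$ ending at a position $b$ are exactly the intervals $[a,b]$ with $a>m_\pi(b)$, where $m_\pi(b)$ denotes the position of the nearest entry to the left of $b$ that exceeds $\pi_b$ (and $m_\pi(b)=0$ if there is none). Hence it suffices to prove the stronger statement that $m_\pi(b)=m_{\swu(\pi)}(b)$ for every $b$; this makes the families of good windows of $\pi$ and of $\swu(\pi)$ literally coincide, forcing $\mathcal S(\pi)=\mathcal S(\swu(\pi))$ with no need to characterize which laminar families of windows actually arise as subtree windows.

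The heart of the argument is then the claim $m_\pi=m_{\swu(\pi)}$, which I would prove by induction on $n$ directly from Definition~\ref{Def8}. Writing $\pi=L\oplus(1\ominus R)$, the global maximum sits at position $p=|L|+1$ in both $\pi$ and $\swu(\pi)$; the entries before position $p$ form an order-isomorphic copy of $L$ (respectively $\swu(L)$), and the entries after position $p$ form an order-isomorphic copy of $R$ (respectively $\swu(R)$). For $b=p$ there is nothing larger to the left, so $m(b)=0$ on both sides. For $b<p$, the entries to the left of $b$ all lie in the initial block, so $m_\pi(b)$ is computed entirely within $L$ and $m_{\swu(\pi)}(b)$ entirely within $\swu(L)$, and the induction hypothesis for $L$ gives equality. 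For $b>p$ the key point is the magnitude structure created by $\swu$: in $\pi$ the final block lies \emph{above} the initial block, whereas in $\swu(\pi)$ it lies \emph{below} it. In either case, scanning leftward from $b$ one first meets the nearest larger entry inside the final block if there is one, and otherwise meets the global maximum at position $p$ before reaching any entry of the initial block; thus $m_\pi(b)$ and $m_{\swu(\pi)}(b)$ are each governed by the final block, and the induction hypothesis for $R$ finishes the case. I expect this last case to be the main obstacle: one must verify carefully that, although $\swu$ interchanges the relative heights of the two blocks, the position at which the leftward scan first succeeds is unchanged, which is exactly where the $\oplus$/$\ominus$ bookkeeping of Definition~\ref{Def8} must be handled cleanly. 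This refinement is genuinely stronger than the descent-set preservation recorded in Lemma~\ref{Lem1}, since $m_\pi(b)$ compares $\pi_b$ with non-adjacent entries, so Lemma~\ref{Lem1} does not by itself suffice.
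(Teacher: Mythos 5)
Your proposal is correct, but it is not the route this paper takes: the paper gives no internal proof of Proposition \ref{Prop2} at all, deferring instead to the external article \cite{DefantPolyurethane}, where the result is established using ``polyurethane toggles.'' Your argument is self-contained and elementary, and each step checks out. The rigidity observation is right: a skeleton together with a prescribed postorder word admits exactly one labeling, so $P^{-1}(\pi)$ is canonically identified with the set $\mathcal S(\pi)$ of skeletons whose forced labeling is decreasing, and the proposition reduces to $\mathcal S(\pi)=\mathcal S(\swu(\pi))$. The window reformulation is also right: decreasingness is equivalent to every subtree's root carrying the maximum label of its (contiguous) postorder block, so membership in $\mathcal S(\pi)$ depends only on which intervals $[a,b]$ satisfy $\pi_b>\pi_j$ for $a\le j<b$, i.e.\ only on the nearest-larger-to-the-left function $m_\pi$. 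Finally, the induction establishing $m_\pi=m_{\swu(\pi)}$ goes through exactly as you outline: writing $\pi=L\oplus(1\ominus R)$ with the maximum at position $p$, the cases $b\le p$ reduce to $L$, and for $b>p$ the global maximum at position $p$ shields the leftward scan from the initial block in both permutations --- in $\pi$ because the initial block lies entirely below the final block, and in $\swu(\pi)$ because the scan is stopped at position $p$ before it can reach the (now higher) initial block --- so both sides reduce to $m_R=m_{\swu(R)}$. You are also right that this is strictly stronger than the descent-set preservation in Lemma \ref{Lem1} and cannot be deduced from it. What your approach buys is a short, purely combinatorial proof living entirely inside this paper's framework, with a bijection that is canonical (relabel the same skeleton); what the cited approach buys is an explicit system of local tree operations, which the external paper develops as a tool in its own right.
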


The next proposition will provide an important tool for building permutation classes that behave nicely under the map $\swu$. 

\begin{proposition}\label{Prop3}
Let $\tau$ be a permutation such that $\swu(\Av(231,\tau))=\Av(132,\swu(\tau))$. If $\tau$ is sum indecomposable, then \[\swu(\Av(231,\tau\oplus 1))=\Av(132,\swu(\tau\oplus 1)).\] If $\tau$ is skew indecomposable, then \[\swu(\Av(231,1\ominus \tau))=\Av(132,\swu(1\ominus\tau)).\]  
\end{proposition}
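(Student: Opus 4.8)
The plan is to deduce each set equality from a containment criterion, using that $\swu\colon\Av(231)\to\Av(132)$ is a length-preserving bijection (Lemma \ref{Lem1}). Because $\swu$ is bijective, the identity $\swu(\Av(231,\rho))=\Av(132,\swu(\rho))$ is equivalent to the biconditional that, for every $\pi\in\Av(231)$, the permutation $\pi$ contains $\rho$ if and only if $\swu(\pi)$ contains $\swu(\rho)$. Thus the hypothesis is exactly this biconditional for $\rho=\tau$ (note $\tau\in\Av(231)$, so $\swu(\tau)$ is defined), and I must establish it for $\rho=\tau\oplus 1$ and for $\rho=1\ominus\tau$. The proof is by strong induction on the length of $\pi$, driven by the recursion in Definition \ref{Def8}: writing $\pi=L\oplus(1\ominus R)$ with the maximum of $\pi$ at position $|L|+1$, so that $L$ and $R$ are the normalizations of the entries before and after the maximum (both in $\Av(231)$ since $\pi$ is), we have $\swu(\pi)=(\swu(L)\oplus 1)\ominus\swu(R)$. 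A short computation from this recursion gives $\swu(\tau\oplus 1)=\swu(\tau)\oplus 1$ and $\swu(1\ominus\tau)=1\ominus\swu(\tau)$, so the target forbidden patterns are $\swu(\tau)\oplus 1$ and $1\ominus\swu(\tau)$.

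First I would treat the sum-indecomposable case. I would classify occurrences of $\tau\oplus 1$ in $\pi$ according to where the extra top-right point and the copy of $\tau$ sit relative to the block $L$, the maximum, and the block $R$. The key input is that $L$ lies below and to the left of $R$, so any copy of $\tau$ meeting both blocks would exhibit $\tau$ as a direct sum of two nonempty permutations; sum-indecomposability of $\tau$ forbids this and forces every copy of $\tau$ into a single block. This localization yields: $\pi$ contains $\tau\oplus 1$ if and only if $L$ contains $\tau\oplus 1$, or $R$ contains $\tau\oplus 1$, or $L$ contains $\tau$ (the last case using the maximum of $\pi$ as the extra point). A parallel, purely geometric analysis of $\swu(\pi)=(\swu(L)\oplus 1)\ominus\swu(R)$—where the appended point now does the pinning—shows that $\swu(\pi)$ contains $\swu(\tau)\oplus 1$ if and only if $\swu(L)$ contains $\swu(\tau)\oplus 1$, or $\swu(R)$ contains $\swu(\tau)\oplus 1$, or $\swu(L)$ contains $\swu(\tau)$. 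Matching the two lists term by term—the first two entries via the inductive hypothesis applied to $L$ and $R$, the third via the hypothesis applied to $L$—finishes this case.

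The skew-indecomposable case I would attack by the mirror classification, now using the distinguished maximum of $\swu(\pi)$ (the appended point inside $\swu(L)\oplus 1$) as the natural witness for the leading point of $1\ominus\swu(\tau)$. The forward implication is clean: if $\pi$ contains $1\ominus\tau$, then one of $L$, $R$ contains $1\ominus\tau$ or $R$ contains $\tau$, and in each subcase the inductive hypothesis or the given hypothesis—together with the fact that the maximum of $\swu(\pi)$ dominates all of $\swu(R)$ from the upper left—produces a copy of $1\ominus\swu(\tau)$ in $\swu(\pi)$. The hard part will be the reverse implication: a copy of $1\ominus\swu(\tau)$ in $\swu(\pi)$ whose leading point lies in the block $\swu(L)$ can a priori split the remaining copy of $\swu(\tau)$ as a skew sum across $\swu(L)$ and $\swu(R)$, and such a straddling copy has no obvious counterpart on the $\pi$-side, where the corresponding decomposition admits no straddling. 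This is exactly where skew-indecomposability of $\tau$ must enter. Writing $\tau=L_\tau\oplus(1\ominus R_\tau)$, skew-indecomposability forces either $\swu(\tau)$ itself to be skew-indecomposable (in which case no straddling copy exists and the argument mirrors the sum case exactly) or $R_\tau$ and $L_\tau$ to both be nonempty with the leading skew-component of $\swu(\tau)$ equal to the skew-indecomposable permutation $\swu(L_\tau)\oplus 1$. I would use this rigidity of the leading component, in tandem with the hypothesis applied to the relevant sub-block, to show that any straddling copy of $1\ominus\swu(\tau)$ can be replaced by one that is localized to a single block or anchored at the maximum, and hence matched with an occurrence of $1\ominus\tau$ in $\pi$. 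Verifying this reabsorption of straddling copies, and confirming that the induction's base cases are compatible with it, is the delicate core of the proof and the step I expect to cost the most work.
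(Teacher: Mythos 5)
Your sum-indecomposable case is correct and is, in substance, the paper's own proof: the paper likewise inducts on length through the decomposition $\pi=L\oplus(1\ominus R)$, $\swu(\pi)=(\swu(L)\oplus 1)\ominus\swu(R)$, localizes copies of $\tau\oplus 1$ using sum indecomposability on the $231$-side and the fact that $\swu(\tau)\oplus 1$ ends in its maximum on the $132$-side, and then applies the hypothesis to $L$ and the inductive hypothesis to $R$. Your biconditional phrasing is an equivalent repackaging of that argument.

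The skew-indecomposable case is where your proposal has a genuine gap, and the step you defer (``reabsorbing'' straddling copies) is not merely delicate: it cannot be carried out, because straddling copies really do occur and have no counterpart on the $231$-side. Concretely, take $\tau=132$, which is skew indecomposable and satisfies the hypothesis: a short induction shows that $\swu$ fixes every element of $\Av(132,231)$ pointwise (the largest entry of such a permutation is at one end, so the recursion reduces to $\swu(L\oplus 1)=\swu(L)\oplus 1$ or $\swu(1\ominus R)=1\ominus\swu(R)$), and since $\swu(132)=231$ this gives $\swu(\Av(231,132))=\Av(132,231)=\Av(132,\swu(132))$. Now let $\pi=31254=312\oplus(1\ominus 1)$. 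Then $\pi$ avoids $231$, and $\pi$ avoids $1\ominus\tau=4132$ because no entry of $\pi$ has three smaller entries to its right. However, $\swu(\pi)=(\swu(312)\oplus 1)\ominus\swu(1)=(312\oplus 1)\ominus 1=42351$, which contains $\swu(1\ominus\tau)=1\ominus 231=4231$ as the subsequence $4,2,3,1$. This is exactly one of your straddling copies: the leading point together with the ``$12$'' of $231=12\ominus 1$ sits inside $\swu(L)=312$, and the terminal ``$1$'' sits inside $\swu(R)$. Hence $\swu(\Av(231,4132))\neq\Av(132,4231)$. No alternative bijection can rescue the statement either: one can check that $|\Av_5(231,4132)|=34$ while $|\Av_5(132,4231)|=33$, so these two classes are not even Wilf equivalent. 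In other words, the skew half of the proposition is false as stated, so the reabsorption step you flagged as the crux is unfixable.

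For what it is worth, this is also a defect of the paper itself: its proof treats only the sum-indecomposable half and dismisses the other half with ``the proof of the case in which $\tau$ is skew indecomposable is similar,'' and the example above shows that it is not. The asymmetry is the one you identified: $\swu$ preserves the property of ending in one's maximum (which is why no straddling arises in the sum case) but does not preserve skew indecomposability. What is true --- and what the first branch of your own dichotomy already proves --- is the skew statement under the stronger hypothesis that $\swu(\tau)$ is skew indecomposable, equivalently that $\tau$ ends in its largest entry (then $\swu(\tau)=\swu(L_\tau)\oplus 1$ also ends in its largest entry, no straddling can occur, and the argument genuinely mirrors the sum case). That weaker version covers the patterns $\chi_m(1)$ and $\chi_m(12)$ in Theorem \ref{Thm12}, but not the applications to $\chi_m(1423)$ and $\chi_m(2143)$ with $m$ odd; indeed, $\pi=312645\in\Av(231,51423)$ while $\swu(\pi)=534612$ contains $\swu(51423)=53412$, so equation \eqref{Eq17} fails there as well.
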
 

\begin{proof}
We prove the case in which $\tau$ is sum indecomposable; the proof of the case in which $\tau$ is skew indecomposable is similar. We will prove that \[\swu(\Av_n(231,\tau\oplus 1))=\Av_n(132,\swu(\tau\oplus 1))\] for all $n\geq 0$. This is easy if $n\leq 1$, so we may assume $n\geq 2$ and proceed by induction on $n$. Choose $\pi\in\Av_n(231,\tau\oplus 1)$ and $\sigma\in\Av_n(132,\swu(\tau\oplus 1))$. Our goal is to show that $\swu(\pi)\in\Av_n(132,\swu(\tau\oplus 1))$ and $\swu^{-1}(\sigma)\in\Av_n(231,\tau\oplus 1)$. We already know that $\swu(\pi)$ avoids $132$ and $\swu^{-1}(\sigma)$ avoids $231$, so we are left to show that $\swu(\pi)$ avoids $\swu(\tau\oplus 1)$ and $\swu^{-1}(\sigma)$ avoids $\tau\oplus 1$.   

We can write 
\begin{equation}\label{Eq14}
\pi=L\oplus(1\ominus R)\quad \text{and}\quad\sigma=(\widehat L\oplus 1)\ominus\widehat R
\end{equation} so that 
\begin{equation}\label{Eq15}\swu(\pi)=(\swu(L)\oplus 1)\ominus \swu(R)\quad\text{and}\quad\swu^{-1}(\sigma)=\swu^{-1}(\widehat L)\oplus(1\ominus\swu^{-1}(\widehat R)).
\end{equation} 
Because $L,R\in\Av(231,\tau\oplus 1)$, it follows by induction that $\swu(L)$ and $\swu(R)$ avoid $\swu(\tau\oplus 1)$. Similarly, $\swu^{-1}(\widehat L)$ and $\swu^{-1}(\widehat R)$ avoid $\tau\oplus 1$. 

Assume by way of contradiction that $\swu(\pi)$ contains $\swu(\tau\oplus 1)$. Note that $\swu(\tau\oplus 1)=\swu(\tau)\oplus 1$ by the definition of $\swu$. Since $\swu(L)$ and $\swu(R)$ avoid $\swu(\tau\oplus 1)$, it follows from \eqref{Eq15} that $\swu(L)$ contains $\swu(\tau)$. Using our hypothesis, we deduce that \[\swu(L)\not\in\Av(132,\swu(\tau))=\swu(\Av(231,\tau)).\] We know that $L$ avoids $231$ (because $\pi$ does), so $L$ must contain $\tau$. We can now use \eqref{Eq14} to see that $\pi$ contains $\tau\oplus 1$, which is our desired contradiction. 

Next, assume by way of contradiction that $\swu^{-1}(\sigma)$ contains $\tau\oplus 1$. Combining \eqref{Eq15} with the hypothesis that $\tau$ is sum indecomposable, it is straightforward to check that $\swu^{-1}(\widehat L)$ contains $\tau$. This means that \[\widehat L\not\in\swu(\Av(231,\tau))=\Av(132,\swu(\tau)).\] We know that $\widehat L$ avoids $132$ (because $\sigma$ does), so $\widehat L$ must contain $\swu(\tau)$. It is now immediate from \eqref{Eq14} that $\sigma$ contains $\swu(\tau)\oplus 1=\swu(\tau\oplus 1)$, which is a contradiction.    
\end{proof}

Propositions \ref{Prop2} and \ref{Prop3} allow us to produce several examples of postorder Wilf equivalences. The following theorem exhibits infinitely many such examples, but there could certainly be others. Let us first fix some notation. Given $\pi\in S_n$, let 
\begin{equation}\label{Eq22}
\chi_m(\pi)=\begin{cases}  (n+m-1)\cdots(n+3)(n+1)\pi(n+2)(n+4)\cdots(n+m) & \mbox{if } m\equiv 0\pmod 2; \\  (n+m)\cdots(n+3)(n+1)\pi(n+2)(n+4)\cdots(n+m-1)& \mbox{if } m\equiv 1\pmod 2. \end{cases}
\end{equation}
For example, 
\[\chi_5(132)=86413257,\quad\text{and}\quad \chi_6(132)=864132579.\]

\begin{theorem}\label{Thm12}
Preserving the preceding notation, let \[\mathcal A=\bigcup_{m\geq 0}\{\chi_m(1),\chi_m(12),\chi_m(1423),\chi_m(2143)\}.\]
Let $\tau^{(1)},\tau^{(2)},\ldots$ be a (possibly empty) list of patterns taken from the set $\mathcal A$, and let $\tau'^{(i)}=\swu(\tau^{(i)})$ for all $i$. The permutation classes $\Av(231,\tau^{(1)},\tau^{(2)},\ldots)$ and $\Av(132,\tau'^{(1)},\tau'^{(2)},\ldots)$ are postorder Wilf equivalent. 
\end{theorem}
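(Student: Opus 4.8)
\textbf{Proof plan for Theorem \ref{Thm12}.}

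The plan is to produce, for each permutation class $\Av(231,\tau^{(1)},\tau^{(2)},\ldots)$ with $\tau^{(i)}\in\mathcal A$, a skeleton-preserving bijection onto $P^{-1}(\Av(132,\tau'^{(1)},\ldots))$ by gluing together the local bijections $\eta_\pi$ from Proposition \ref{Prop2}. The key observation is that Proposition \ref{Prop2} supplies, for every single $231$-avoiding permutation $\pi$, a skeleton-preserving bijection $\eta_\pi:P^{-1}(\pi)\to P^{-1}(\swu(\pi))$. Since $P^{-1}(A)=\bigsqcup_{\pi\in A}P^{-1}(\pi)$ for any set $A$ of permutations, the disjoint union $\eta:=\bigsqcup_{\pi}\eta_\pi$ is automatically a skeleton-preserving bijection from $P^{-1}(\Av(231))$ to $P^{-1}(\Av(132))$. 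The entire content of the theorem therefore reduces to verifying that $\swu$ carries the class $\Av(231,\tau^{(1)},\tau^{(2)},\ldots)$ bijectively onto the class $\Av(132,\swu(\tau^{(1)}),\swu(\tau^{(2)}),\ldots)$; once that set-level identity is in hand, restricting $\eta$ to the corresponding postorder preimages gives the desired skeleton-preserving bijection.

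Thus the first step is to record the reduction: it suffices to prove
\[
\swu\!\left(\Av(231,\tau^{(1)},\tau^{(2)},\ldots)\right)=\Av(132,\swu(\tau^{(1)}),\swu(\tau^{(2)}),\ldots).
\]
Since $\swu:\Av(231)\to\Av(132)$ is a bijection (Lemma \ref{Lem1}), and since imposing several avoidance conditions intersects the corresponding classes, this in turn follows from the single-pattern statements $\swu(\Av(231,\tau))=\Av(132,\swu(\tau))$ for each $\tau\in\mathcal A$, intersected over all the patterns in the list. The second step is therefore to establish these single-pattern identities for every $\tau$ in $\mathcal A$. Here is where Proposition \ref{Prop3} does the heavy lifting: it is an inductive engine that, given $\swu(\Av(231,\tau))=\Av(132,\swu(\tau))$, produces the same identity for $\tau\oplus 1$ (when $\tau$ is sum indecomposable) and for $1\ominus\tau$ (when $\tau$ is skew indecomposable). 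The plan is to recognize each of the generators $\chi_m(1),\chi_m(12),\chi_m(1423),\chi_m(2143)$ as being built from a small base pattern by an alternating sequence of the operations $\tau\mapsto\tau\oplus 1$ and $\tau\mapsto 1\ominus\tau$, matching the shape dictated by \eqref{Eq22}, and to check at each stage that the relevant indecomposability hypothesis of Proposition \ref{Prop3} holds so that the induction may be applied.

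The third step is to verify the base cases of this induction, i.e.\ that $\swu(\Av(231,\tau_0))=\Av(132,\swu(\tau_0))$ for the four seed patterns $\tau_0\in\{1,12,1423,2143\}$ (which are exactly $\chi_0$ of the four starting permutations). These are small fixed checks: one computes $\swu(\tau_0)$ explicitly from Definition \ref{Def8} and then verifies directly that $\swu$ maps $231$-avoiding, $\tau_0$-avoiding permutations precisely to $132$-avoiding, $\swu(\tau_0)$-avoiding permutations, using the structural description of $\swu$ in terms of the $L\oplus(1\ominus R)$ decomposition. The main obstacle I anticipate is the bookkeeping in the second step: one must confirm that the particular alternating $\oplus 1$ / $1\ominus(\cdot)$ tower recorded in \eqref{Eq22} really does produce $\chi_m$ from $\chi_0$, that the parity split in \eqref{Eq22} corresponds exactly to whether the final operation in the tower is a sum or a skew sum, and—crucially—that the intermediate permutations remain sum indecomposable (respectively skew indecomposable) at precisely the steps where Proposition \ref{Prop3} requires it. Since prepending a new point via $1\ominus(\cdot)$ creates a skew-decomposable permutation and appending via $(\cdot)\oplus 1$ creates a sum-decomposable one, the alternation is what keeps the correct indecomposability available at each step, and the verification of this interleaving is the delicate combinatorial core of the argument.
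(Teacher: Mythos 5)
Your plan is correct and follows essentially the same route as the paper's proof: glue the bijections $\eta_\pi$ from Proposition \ref{Prop2} over the class, reduce to the set identity $\swu(\Av(231,\tau))=\Av(132,\swu(\tau))$ for each $\tau\in\mathcal A$, run the Proposition \ref{Prop3} induction up the alternating $\oplus 1$ / $1\ominus(\cdot)$ tower (with exactly the indecomposability bookkeeping you describe), and finish with the four seed patterns. The only point to flag is that the base cases $\mu\in\{1423,2143\}$ are not quite one-shot checks; the paper handles them by a further induction on length using the $L\oplus(1\ominus R)$ decomposition, which is the ``direct verification'' your third step implicitly requires.
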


\begin{proof}
We know that $\swu:\Av(231)\to\Av(132)$ is a bijection. We will show that \[\swu(\Av(231,\tau^{(1)},\tau^{(2)},\ldots))=\Av(132,\tau'^{(1)},\tau'^{(2)},\ldots).\] This will allow us to define \[\eta:P^{-1}(\Av(231,\tau^{(1)},\tau^{(2)},\ldots))\to P^{-1}(\Av(132,\tau'^{(1)},\tau'^{(2)},\ldots))\] by \[\eta(T)=\eta_{P(T)}(T),\] where $\eta_{P(T)}:P^{-1}(P(T))\to P^{-1}(\swu(P(T)))$ is the skeleton-preserving bijection from Proposition~\ref{Prop2}. It will then follow that $\eta$ is a skeleton-preserving bijection, which will complete the proof. By taking intersections, we find that it suffices to prove that 
\begin{equation}\label{Eq17}
\swu(\Av(231,\tau))=\Av(132,\swu(\tau))
\end{equation} for all $\tau\in\mathcal A$. 

Choose $\tau\in\mathcal A$, and write $\tau=\chi_m(\mu)$ for some $m\geq 0$ and $\mu\in\{1,12,1423,2143\}$. We induct on $m$. Suppose that $m\geq 1$ and that we have already proven the equality $\swu(\Av(231,\chi_{m-1}(\mu)))=\Av(132,\swu(\chi_{m-1}(\mu)))$. If $m$ is even, then $\chi_{m-1}(\mu)$ is sum indecomposable and $\tau=\chi_{m-1}(\mu)\oplus 1$, so we can use Proposition \ref{Prop3} to see that \eqref{Eq17} holds. If $m$ is odd, then $\chi_{m-1}(\mu)$ is skew indecomposable and $\tau=1\ominus\chi_{m-1}(\mu)$, so we can use Proposition \ref{Prop3} to see that \eqref{Eq17} holds in this case as well. This completes the proof of the inductive step, so it remains to prove the base case in which $m=0$. In other words, we need to prove \eqref{Eq17} when $\tau=\mu\in\{1,12,1423,2143\}$. This is easy if $\mu\in\{1,12\}$, so we may assume $\mu\in\{1423,2143\}$. 

We wish to show that $\swu(\Av_n(231,\mu))=\Av_n(132,\swu(\mu))$ for all $n\geq 0$. This is trivial if $n\leq 2$, so we may assume $n\geq 3$ and induct on $n$. Fix $\pi\in\Av_n(231,\mu)$ and $\sigma\in\Av_n(132,\swu(\mu))$. Our goal is to show that $\swu(\pi)\in\Av_n(132,\swu(\mu))$ and $\swu^{-1}(\sigma)\in\Av_n(231,\mu)$. We know that $\swu(\pi)$ avoids $132$ and $\swu^{-1}(\sigma)$ avoids $231$, so we need only prove that $\swu(\pi)$ avoids $\swu(\mu)$ and that $\swu^{-1}(\sigma)$ avoids $\mu$.   

Let us write 
\begin{equation}\label{Eq11}
\pi=L\oplus(1\ominus R)\quad \text{and}\quad\sigma=(\widehat L\oplus 1)\ominus\widehat R
\end{equation} so that 
\begin{equation}\label{Eq12}
\swu(\pi)=(\swu(L)\oplus 1)\ominus \swu(R)
\quad\text{and}\quad\swu^{-1}(\sigma)=\swu^{-1}(\widehat L)\oplus(1\ominus\swu^{-1}(\widehat R)).
\end{equation} 
Because $L,R\in\Av(231,\mu)$, it follows by induction that $\swu(L)$ and $\swu(R)$ avoid $\swu(\mu)$. A similar argument shows that $\swu^{-1}(\widehat L)$ and $\swu^{-1}(\widehat R)$ avoid $\mu$. We now consider cases based on whether $\mu$ is $1423$ or $2143$. 

First, suppose $\mu=1423$. In this case, $\swu(\mu)=3412$. Assume by way of contradiction that $\swu(\pi)$ contains $3412$. Because $\swu(R)$ avoids $3412$, it follows from \eqref{Eq12} that $\swu(L)$ is nonempty. Hence, $L$ is nonempty. Because $\swu(L)$ avoids $3412$, it follows from \eqref{Eq12} that $\swu(R)$ has at least one ascent. This tells us that $\swu(R)$ is not a strictly decreasing permutation, which means that $R$ is not strictly decreasing either. Hence, $R$ has an ascent. It is now immediate from \eqref{Eq11} that $\pi$ contains $1423$, which is a contradiction. Now assume $\swu^{-1}(\sigma)$ contains $1423$. Because $\swu^{-1}(\widehat R)$ avoids $1423$, we can use \eqref{Eq12} to see that $\swu^{-1}(\widehat L)$ is nonempty. Similarly, $\swu^{-1}(\widehat R)$ has an ascent because $\swu^{-1}(\widehat L)$ avoids $1423$. As a consequence, $\widehat L$ is nonempty, and $\widehat R$ has an ascent. It is now immediate from \eqref{Eq11} that $\sigma$ contains $3412$, which is a contradiction. This handles the case in which $\mu=1423$. 

For the second case, suppose $\mu=2143$. In this case, $\swu(\mu)=3241$. Assume by way of contradiction that $\swu(\pi)$ contains $3241$. Using \eqref{Eq12} and the fact that $\swu(R)$ and $\swu(L)$ avoid $3241$, we deduce that $\swu(L)$ has a descent and $\swu(R)$ is nonempty. This implies that $L$ has a descent and $R$ is nonempty. It follows from \eqref{Eq11} that $\pi$ contains $2143$, which is a contradiction. A similar argument shows that $\swu^{-1}(\sigma)$ avoids $2143$. This handles the case in which $\mu=2143$. 
\end{proof}

\begin{remark}\label{Rem3}
Even if we just take the sequence $\tau^{(1)},\tau^{(2)},\ldots$ in Theorem \ref{Thm12} to be empty, we obtain the new result that $\Av(231)$ and $\Av(132)$ are postorder Wilf equivalent. The vast generality of Theorem \ref{Thm12} comes from the vast generality inherent in the definition of postorder Wilf equivalence coupled with the large size of the set $\mathcal A$. We could obtain infinitely many examples of postorder Wilf equivalence even if $\mathcal A$ was replaced by $\bigcup_{m\geq 0}\{\chi_m(1)\}$; the other elements of $\mathcal A$ just yield more examples!  \hspace*{\fill}$\lozenge$ 
\end{remark}

\begin{theorem}\label{Thm14}
Let \[\mathcal A=\bigcup_{m\geq 0}\{\chi_m(1),\chi_m(12),\chi_m(1423),\chi_m(2143)\}.\]
Let $\tau^{(1)},\tau^{(2)},\ldots$ be a (possibly empty) list of patterns taken from the set $\mathcal A$, and let $\tau'^{(i)}=\swu(\tau^{(i)})$ for all $i$. The permutation classes $\Av(231,\tau^{(1)},\tau^{(2)},\ldots)$ and $\Av(132,\tau'^{(1)},\tau'^{(2)},\ldots)$ are strongly fertility Wilf equivalent. 
\end{theorem}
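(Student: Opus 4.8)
The plan is to mirror the proof of Theorem \ref{Thm12}, substituting a valid-hook-configuration analog of Proposition \ref{Prop2} for the tree-level input used there. Since Theorems \ref{Thm12} and \ref{Thm14} have identical hypotheses, I would first reuse the set identity already established in the course of proving Theorem \ref{Thm12}, namely $\swu(\Av(231,\tau^{(1)},\tau^{(2)},\ldots))=\Av(132,\tau'^{(1)},\tau'^{(2)},\ldots)$; combined with the fact (Lemma \ref{Lem1}) that $\swu\colon\Av(231)\to\Av(132)$ is a bijection, this shows that $\swu$ restricts to a bijection between the two classes. The heart of the argument is then the following claim, which plays the role that Proposition \ref{Prop2} plays for postorder Wilf equivalence: \emph{for every $\pi\in\Av(231)$ there is a type-preserving bijection $\Phi_\pi\colon\VHC(\pi)\to\VHC(\swu(\pi))$.} Granting this, I would glue the maps $\Phi_\pi$ over all $\pi\in\Av(231,\tau^{(1)},\tau^{(2)},\ldots)$: because the sets $\VHC(\pi)$ are pairwise disjoint (each valid hook configuration remembers its underlying permutation, by Definition \ref{Def9}) and $\swu$ bijects the two classes, the disjoint union $\bigsqcup_\pi\Phi_\pi$ is a type-preserving bijection from $\VHC(\Av(231,\tau^{(1)},\ldots))$ onto $\VHC(\Av(132,\tau'^{(1)},\ldots))$, which is exactly what Definition \ref{Def10} demands.

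To construct $\Phi_\pi$ I would imitate the recursive definition of $\swu$. Writing $\pi=L\oplus(1\ominus R)$, I first dispose of the case $R\neq\emptyset$: then the maximum of $\pi$ sits at position $|L|+1$ and is immediately followed by a smaller entry, so it is a descent top that no hook can reach, forcing $\VHC(\pi)=\emptyset$; the same reasoning applied to $\swu(\pi)=(\swu(L)\oplus1)\ominus\swu(R)$ gives $\VHC(\swu(\pi))=\emptyset$, so $\Phi_\pi$ is the empty map. This reduces everything to the case $R=\emptyset$, i.e. $\pi=L\oplus1$, where $\swu(\pi)=\swu(L)\oplus1$. Here I would prove a decomposition lemma expressing the valid hook configurations of a permutation with a trailing maximum in terms of valid hook configurations of the blocks appearing inside $L$ (the trailing maximum being the one point available as a new northeast endpoint), and then define $\Phi_\pi$ by \emph{sliding}: the operator $\swu$ lifts the southwest block of the plot upward, and I would transport each hook together with its endpoints accordingly, applying the inductive hypothesis to the sub-blocks of $L$. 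Because $\swu$ preserves the descent set (Lemma \ref{Lem1}), the descent positions, and hence the columns containing southwest endpoints, coincide for $\pi$ and $\swu(\pi)$, which is what makes the correspondence of hooks well defined. The key point to verify is that sliding carries the induced coloring of the plot of $\pi$ to the induced coloring of the plot of $\swu(\pi)$ in a way that merely permutes the color classes, so that the multiset of color-class sizes—the type—is unchanged.

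The main obstacle is precisely this last verification: checking that sliding sends valid hook configurations to valid hook configurations while preserving type. The difficulty is that $\swu$ is far from monotone—it raises the entire southwest block above the complementary block—so I must confirm that Conditions 2 and 3 of Definition \ref{Def9} survive the rearrangement and that no northeast endpoint is accidentally created or destroyed. Organizing this by the nested decomposition $\pi=L\oplus1$ and $L=L'\oplus(1\ominus R')$ should keep the bookkeeping tractable, since the blocks $L'$, $R'$, and the trailing maximum interact through only a few hooks and each block is handled inductively. As an alternative route for this step, one could instead attempt to descend the skeleton-preserving tree bijection $\eta_\pi$ of Proposition \ref{Prop2} to valid hook configurations through the tree-to-hook-configuration correspondence of \cite{DefantPostorder}, and then argue that equal type-counts on each side force a type-preserving bijection. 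The subtlety there is that the type of the valid hook configuration attached to a decreasing binary plane tree is \emph{not} determined by the skeleton alone in general (two decreasing binary trees with the same skeleton can have postorders with different numbers of descents); since $\swu$ fixes the descent set, the number of hooks matches, but one would still need to verify the extra compatibility that $\eta_\pi$ preserves the full type invariant before the counting argument could be run.
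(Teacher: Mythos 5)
Your proposal matches the paper's proof: the paper likewise reuses the set identity $\swu(\Av(231,\tau^{(1)},\tau^{(2)},\ldots))=\Av(132,\tau'^{(1)},\tau'^{(2)},\ldots)$ established in the proof of Theorem \ref{Thm12}, and glues over all $\pi$ in the class a type-preserving bijection $\widehat\swu\colon\VHC(\pi)\to\VHC(\swu(\pi))$ given exactly by your ``sliding'' construction, in which each hook stays attached to the same pair of column indices while the points slide vertically. The only difference is organizational: the paper defines this map globally in one step (no recursion or decomposition lemma is needed), and the verification you flag as the main obstacle---that Conditions 1--3 of Definition \ref{Def9} and the induced composition survive the slide, which rests on $\Des(\pi)=\Des(\swu(\pi))$ from Lemma \ref{Lem1}---is precisely the check the paper records as ``one can verify,'' illustrated with a figure.
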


\begin{proof}

For $\pi\in\Av_n(231)$, we can transform a valid hook configuration $\mathcal H=(H_1,\ldots,H_m)$ of $\pi$ into a valid hook configuration $\widehat\swu(\mathcal H)$ of $\swu(\pi)$ (see Figure \ref{Fig13}). The plot of $\swu(\pi)$ is obtained by vertically sliding the points in the plot of $\pi$. During this sliding process, we simply keep all the hooks in $\mathcal H$ attached to the same points to obtain $\widehat\swu(\mathcal H)$. In order to state this more precisely, let $(i_u,\pi_{i_u})$ and $(j_u,\pi_{j_u})$ be the southwest and northeast endpoints of $H_u$, respectively. We let $\widehat\swu(\mathcal H)=(H_1',\ldots,H_m')$, where $H_u'$ is the hook with southwest endpoint $(i_u,\swu(\pi)_{i_u})$ and northeast endpoint $(j_u,\swu(\pi)_{j_u})$. 

\begin{figure}[h]
\begin{center}
\includegraphics[width=.825\linewidth]{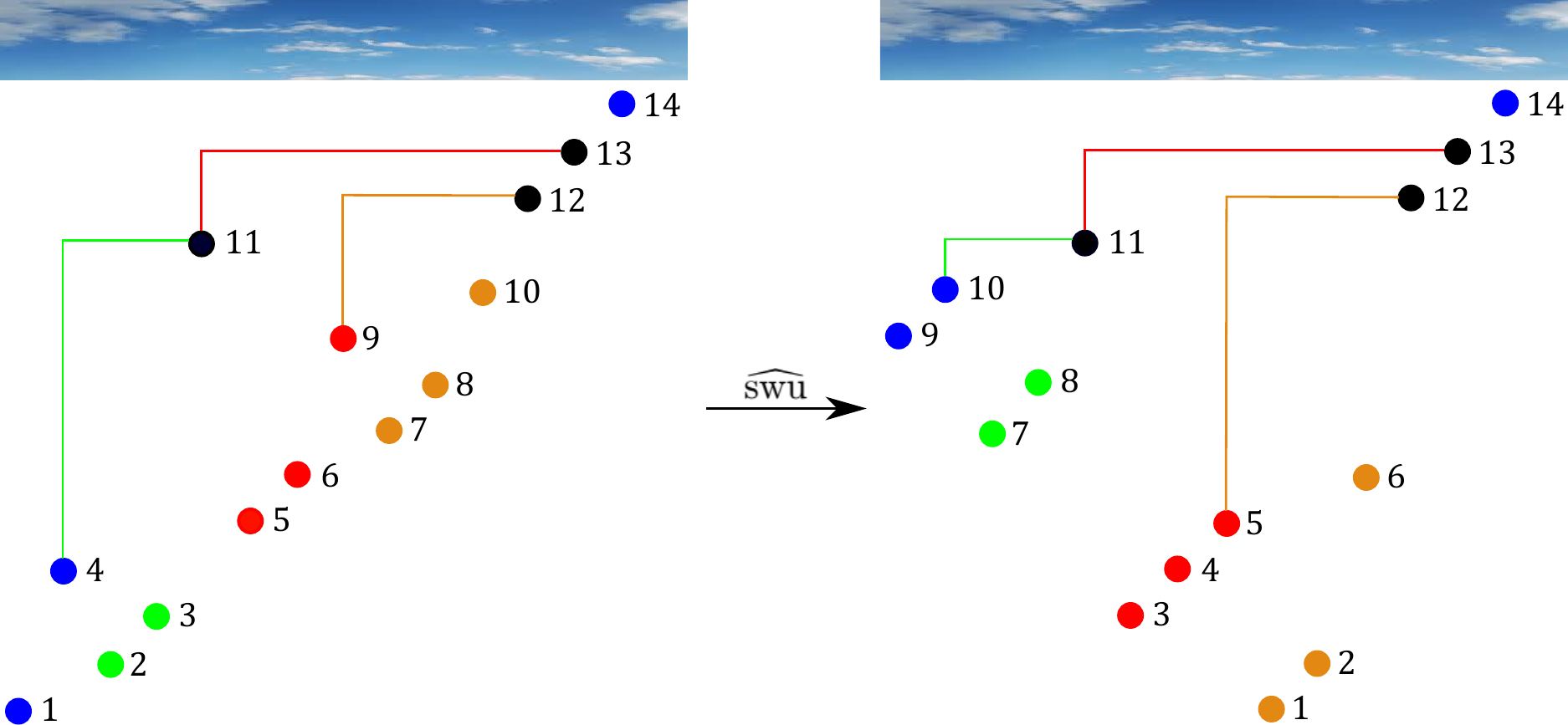}
\caption{Each valid hook configuration $\mathcal H$ of $\pi$ corresponds to a valid hook configuration $\widehat\swu(\mathcal H)$ of $\swu(\pi)$. We have drawn the induced colorings to show that the transformation does not change the induced valid composition. Indeed, both of these valid hook configurations induce the valid composition $(3,2,3,3)$.}
\label{Fig13}
\end{center}  
\end{figure}

We saw in the proof of Theorem \ref{Thm12} that \[\swu(\Av(231,\tau^{(1)},\tau^{(2)},\ldots))=\Av(132,\tau'^{(1)},\tau'^{(2)},\ldots).\] Using Lemma \ref{Lem1} and Definitions \ref{Def9} and \ref{Def8}, one can verify that $\widehat\swu$ gives a type-preserving bijection from $\VHC(\pi)$ to $\VHC(\swu(\pi))$ for every $\pi\in\Av(231,\tau^{(1)},\tau^{(2)},\ldots)$. Therefore, it also gives a type-preserving bijection \[\VHC(\Av(231,\tau^{(1)},\tau^{(2)},\ldots))\to\VHC(\Av(132,\tau'^{(1)},\tau'^{(2)},\ldots)).\qedhere\]  
\end{proof}

In the specific case of the following corollary in which the sequence $\tau^{(1)},\tau^{(2)},\ldots$ is empty, we recover the joint equidistribution result of Bouvel and Guibert \cite{Bouvel} mentioned in the introduction and also find several new statistics that are jointly equidistributed on $s^{-1}(\Av_n(231))$ and $s^{-1}(\Av_n(132))$. In the case in which the sequence consists of the single pattern $\tau^{(1)}=312$, we reprove and greatly generalize Theorem 10.1 from \cite{DefantClass}.  

\begin{corollary}\label{Cor3}
Let $\tau^{(1)},\tau^{(2)},\ldots$ and $\tau'^{(1)},\tau'^{(2)},\ldots$ be sequences as in the statement of Theorem~\ref{Thm12}. For every $n\geq 0$, the statistic $\zeil$ and all skeletal statistics are jointly equidistributed on $s^{-1}(\Av_n(231,\tau^{(1)},\tau^{(2)},\ldots))$ and $s^{-1}(\Av_n(132,\tau'^{(1)},\tau'^{(2)},\ldots))$. 
\end{corollary}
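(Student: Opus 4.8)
The plan is to combine two ingredients that are already in hand: Theorem~\ref{Thm12}, which gives postorder Wilf equivalence of $\Av(231,\tau^{(1)},\tau^{(2)},\ldots)$ and $\Av(132,\tau'^{(1)},\tau'^{(2)},\ldots)$, and Proposition~\ref{Prop1}, which converts postorder Wilf equivalence into the joint equidistribution of \emph{all skeletal statistics} on the corresponding stack-sorting preimages. This immediately handles every statistic except $\zeil$, since $\zeil$ is the one statistic in \eqref{Eq8} that is not skeletal. So the entire content of the corollary beyond Theorem~\ref{Thm12} and Proposition~\ref{Prop1} is to show that $\zeil$ can be added to the list, and moreover \emph{jointly} with the skeletal statistics. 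The hard part is precisely this coordination: I must produce a single bijection that simultaneously preserves every skeletal statistic \emph{and} preserves $\zeil$.

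First I would fix $n\geq 0$ and recall from the proof of Proposition~\ref{Prop1} the explicit bijection $\eta^{\#}=I\circ\widetilde\eta\circ I^{-1}$ from $s^{-1}(\Av_n(231,\tau^{(1)},\ldots))$ to $s^{-1}(\Av_n(132,\tau'^{(1)},\ldots))$, which already preserves all skeletal statistics because $\sigma$ and $\eta^{\#}(\sigma)$ share the same skeleton. The task is to verify that this same map preserves $\zeil$. Here I would invoke Lemma~\ref{Lem4}, which writes $\zeil(\sigma)=\min\{\rmax(\sigma),\tl(s(\sigma))\}$. The statistic $\rmax$ is skeletal (as noted after \eqref{Eq8}), so $\rmax(\sigma)=\rmax(\eta^{\#}(\sigma))$ is already guaranteed. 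Thus it remains only to show that $\tl(s(\sigma))=\tl(s(\eta^{\#}(\sigma)))$.

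The key observation for the second term is that applying $s$ to an element of $s^{-1}(\Av_n(231,\ldots))$ lands in $\Av_n(231,\ldots)$, and that the map $\widehat\swu$ from the proof of Proposition~\ref{Prop2} (equivalently $\swu$ at the level of the underlying permutation) relates $s(\sigma)$ to $s(\eta^{\#}(\sigma))$. Concretely, because $\eta^{\#}$ is built from the skeleton-preserving bijection $\eta_\pi:P^{-1}(\pi)\to P^{-1}(\swu(\pi))$ of Proposition~\ref{Prop2}, a permutation $\sigma$ with $s(\sigma)=\pi\in\Av(231)$ is carried to a permutation with stack-sorting image $\swu(\pi)$. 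Then the last two bullet points of Lemma~\ref{Lem1} give $\tl(\pi)=\tl(\swu(\pi))$ for every $\pi\in\Av(231)$, so $\tl(s(\sigma))=\tl(\swu(s(\sigma)))=\tl(s(\eta^{\#}(\sigma)))$. I would state this matching of stack-sorting images carefully, since it is the point where the structure of $\eta^{\#}$ (and hence of $\eta_\pi$) as a preimage-by-preimage construction is actually used.

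Putting these together, for every $\sigma\in s^{-1}(\Av_n(231,\tau^{(1)},\ldots))$ both terms in the $\min$ of Lemma~\ref{Lem4} are preserved by $\eta^{\#}$, so $\zeil(\sigma)=\zeil(\eta^{\#}(\sigma))$, and combined with the skeletal-preservation already established this shows $\eta^{\#}$ realizes the joint equidistribution of $\zeil$ together with all skeletal statistics. I expect the main obstacle to be making precise the claim that $s(\eta^{\#}(\sigma))=\swu(s(\sigma))$: this requires tracking how the fiberwise bijection $\eta_\pi$ interacts with postorder, namely that $\eta^{\#}$ sends the $s$-fiber over $\pi$ to the $s$-fiber over $\swu(\pi)$, which is exactly the content of how $\eta$ was assembled in the proof of Theorem~\ref{Thm12} via $\eta(T)=\eta_{P(T)}(T)$ and the identity $s=P\circ I^{-1}$.
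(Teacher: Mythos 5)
Your proposal is correct and follows essentially the same route as the paper's own proof: the bijection $\eta^{\#}=I\circ\widetilde\eta\circ I^{-1}$ handles the skeletal statistics, and $\zeil$ is handled via Lemma \ref{Lem4} by noting that $\rmax$ is skeletal and that $s(\eta^{\#}(\sigma))=\swu(s(\sigma))$ (which the paper verifies by the chain $s\circ I\circ\widetilde\eta\circ I^{-1}=P\circ\widetilde\eta\circ I^{-1}$), so that $\tl(s(\eta^{\#}(\sigma)))=\tl(s(\sigma))$ by Lemma \ref{Lem1}. The only blemish is a mislabeling --- the map you call $\widehat\swu$ belongs to the proof of Theorem \ref{Thm14}, whereas the fiberwise bijection you actually use is $\eta_\pi$ from Proposition \ref{Prop2} --- but this does not affect the argument.
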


\begin{proof}
In the proof of Theorem \ref{Thm12}, we saw that there is a skeleton-preserving bijection \[\eta:P^{-1}(\Av(231,\tau^{(1)},\tau^{(2)},\ldots))\to P^{-1}(\Av(132,\tau'^{(1)},\tau'^{(2)},\ldots))\] given by \[\eta(T)=\eta_{P(T)}(T),\] where $\eta_{P(T)}:P^{-1}(P(T))\to P^{-1}(\swu(P(T)))$ is the skeleton-preserving bijection from Proposition~\ref{Prop2}. Following the proof of Proposition \ref{Prop1}, we see that $\eta$ restricts to a skeleton-preserving bijection \[\widetilde\eta:P^{-1}(\Av_n(231,\tau^{(1)},\tau^{(2)},\ldots))\cap\DPT^{(2)}\to P^{-1}(\Av_n(132,\tau'^{(1)},\tau'^{(2)},\ldots))\cap\DPT^{(2)}.\] As in the proof of Proposition \ref{Prop1}, we note that the map $\eta^{\#}=I\circ\widetilde\eta\circ I^{-1}$ yields a bijection from $s^{-1}(\Av_n(231,\tau^{(1)},\tau^{(2)},\ldots))$ to $s^{-1}(\Av_n(132,\tau'^{(1)},\tau'^{(2)},\ldots))$ for every $n\geq 0$. We need to show that $f(\sigma)=f(\eta^{\#}(\sigma))$ for every $\sigma\in s^{-1}(\Av_n(231,\tau^{(1)},\tau^{(2)},\ldots))$ and every statistic $f$ that is either $\zeil$ or is skeletal. If $f$ is skeletal, then this follows immediately from the fact that $\widetilde\eta$ is skeleton-preserving. We are left to consider the case $f=\zeil$. 

Choose $\sigma\in s^{-1}(\Av_n(231,\tau^{(1)},\tau^{(2)},\ldots))$, and let $T=I^{-1}(\sigma)$. According to \eqref{Eq6}, $s(\sigma)=P(T)$. We have \[\widetilde\eta(T)=\eta(T)=\eta_{P(T)}(T)\in P^{-1}(\swu(P(T)))=P^{-1}(\swu(s(\sigma))),\] so \[s(\eta^{\#}(\sigma))=s\circ I\circ\widetilde\eta\circ I^{-1}(\sigma)=P\circ \widetilde\eta\circ I^{-1}(\sigma)=P(\widetilde\eta(T))=\swu(s(\sigma)).\] As a consequence, we can use Lemma \ref{Lem1} to see that $\tl(s(\eta^{\#}(\sigma)))=\tl(\swu(s(\sigma)))=\tl(s(\sigma))$. It is not difficult to show that the statistic $\rmax$ is skeletal, so we know from above that $\rmax(\eta^{\#}(\sigma))=\rmax(\sigma)$. We now invoke Lemma \ref{Lem4} to find that \[\zeil(\eta^{\#}(\sigma))=\min\{\rmax(\eta^{\#}(\sigma)),\tl(s(\eta^{\#}(\sigma)))\}=\min\{\rmax(\sigma),\tl(s(\sigma))\}=\zeil(\sigma). \qedhere\]
\end{proof}

We now turn our attention to the map $\swl$. The proof of the following proposition makes use of a general procedure that allows us to decompose valid hook configurations. This procedure (phrased differently) has been crucial for enumerating $3$-stack-sortable permutations and stack-sorting preimages of  permutation classes \cite{DefantCounting,DefantEnumeration}. Let $\pi=\pi_1\cdots\pi_n$ be a permutation, and let $H$ be a hook of $\pi$ with southwest endpoint $(i,\pi_i)$ and northeast endpoint $(j,\pi_j)$. Let us assume that $j$ is larger than every descent of $\pi$. The hook $H$ separates $\pi$ into two parts. One part, which we call the \emph{$H$-unsheltered subpermutation of $\pi$} and denote by $\pi_U^H$, is $\pi_1\cdots\pi_i\pi_{j+1}\cdots\pi_n$. The other part, which we call the \emph{$H$-sheltered subpermutation of $\pi$} and denote by $\pi_S^H$, is $\pi_{i+1}\cdots\pi_{j-1}$. Note that the entry $\pi_j$ does not appear in either of these two parts. 

This decomposition of $\pi$ into the $H$-unsheltered and $H$-sheltered subpermutations provides a useful decomposition of valid hook configurations of $\pi$ that include $H$. Denote the set of such valid hook configurations by $\VHC^H(\pi)$. We have maps \[\varphi_U^H:\VHC^H(\pi)\to\VHC(\pi_U^H)\quad\text{and}\quad\varphi_S^H:\VHC^H(\pi)\to\VHC(\pi_S^H).\] Rather than describe these maps in words, we find it more instructive to give an illustrative example in Figure \ref{Fig14}. The following lemma provides useful information about these maps. We use Figure \ref{Fig14} as a substitute for the proof of this lemma.  

\begin{figure}[h]
\begin{center}
\includegraphics[width=.85\linewidth]{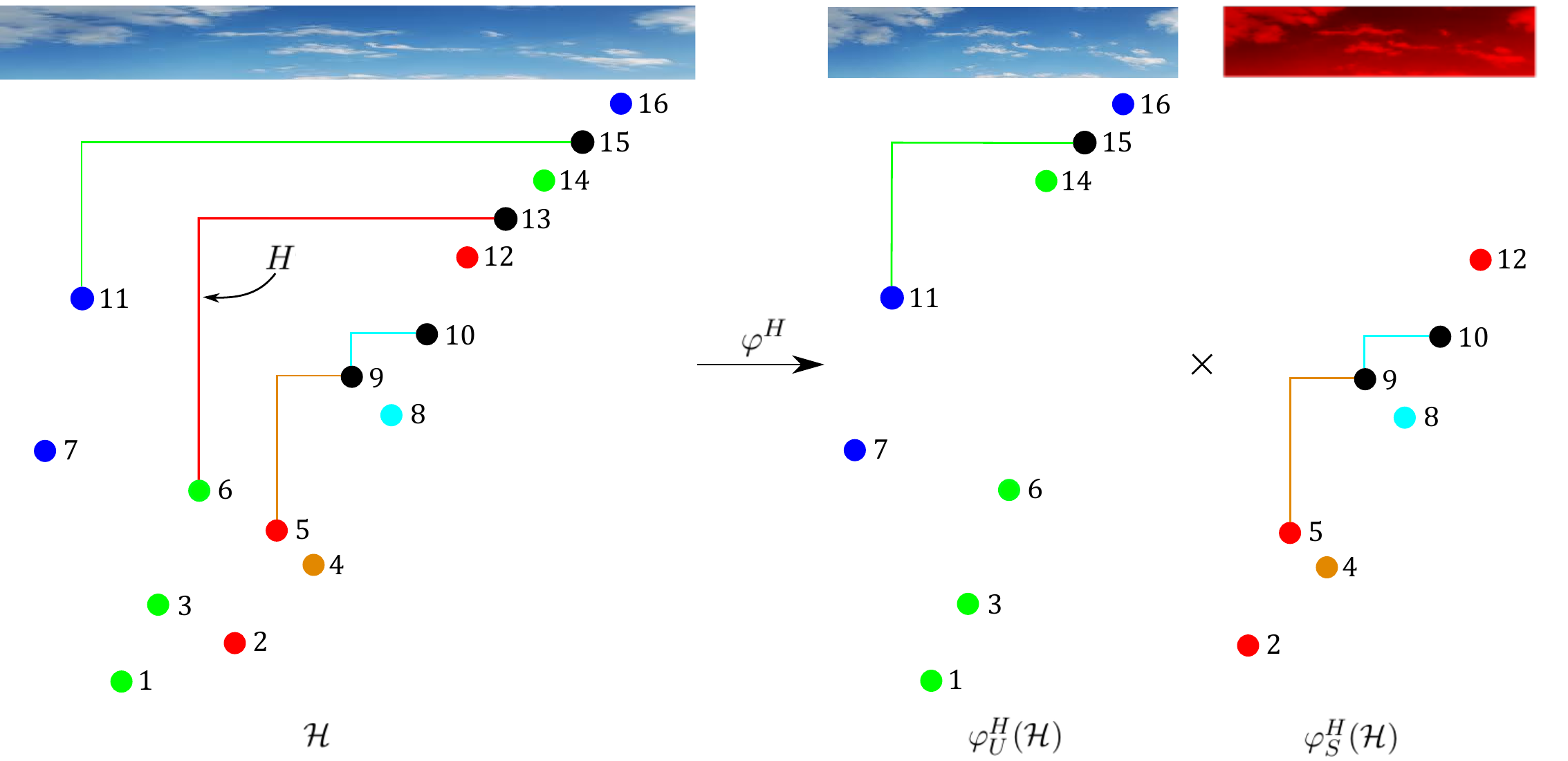}
\caption{An illustration of the maps $\varphi_U^H$ and $\varphi_S^H$ from Lemma \ref{Lem2}. Notice that the hook $H$ in $\mathcal H$ ``becomes" the sky in $\varphi_S^H(\mathcal H)$; this is why we colored that sky red instead of the usual color blue. The valid composition induced by $\mathcal H$ is $(3,4,3,1,1)$, and the valid compositions induced by $\varphi_U^H(\mathcal H)$ and $\varphi_S^H(\mathcal H)$ are $(3,4)$ and $(3,1,1)$, respectively.}
\label{Fig14}
\end{center}  
\end{figure}

\begin{lemma}\label{Lem2}
Let $\pi=\pi_1\cdots\pi_n$ be a permutation with descents $d_1<\cdots<d_k$. Let $H$ be a hook of $\pi$ with southwest endpoint $(d_i,\pi_{d_i})$ and northeast endpoint $(j,\pi_j)$, and assume $j>d_k$. Let $\VHC^H(\pi)$ be the set of valid hook configurations of $\pi$ that include the hook $H$. The maps \[\varphi_U^H:\VHC^H(\pi)\to\VHC(\pi_U^H)\quad\text{and}\quad\varphi_S^H:\VHC^H(\pi)\to\VHC(\pi_S^H)\] are such that the map \[\varphi^H:\VHC^H(\pi)\to\VHC(\pi_U^H)\times\VHC(\pi_S^H)\] given by $\varphi^H(\mathcal H)=(\varphi_U^H(\mathcal H),\varphi_S^H(\mathcal H))$ is a bijection. If $\mathcal H\in\VHC^H(\pi)$ induces the valid composition $(q_0,\ldots,q_k)$, then the valid compositions induced by $\varphi_U^H(\mathcal H)$ and $\varphi_S^H(\mathcal H)$ are $(q_0,\ldots,q_{i-1})$ and $(q_i,\ldots,q_k)$, respectively. 
\end{lemma}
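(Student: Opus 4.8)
The plan is to establish the bijectivity of $\varphi^H$ by constructing an explicit inverse, and to verify the claim about induced compositions by tracking the coloring. The key structural observation is that since the northeast endpoint $(j,\pi_j)$ of $H$ lies to the right of every descent, the entries $\pi_{j+1}\cdots\pi_n$ form an increasing run, so no hook of any valid hook configuration in $\VHC^H(\pi)$ can have a southwest endpoint among these entries. More importantly, the vertical segment of $H$ together with Condition 2 of Definition \ref{Def9} forces a clean separation: every point strictly between columns $i$ and $j$ and below the horizontal bar of $H$ (these are exactly the entries of $\pi_S^H$) is ``sheltered'' by $H$, while the remaining colored points belong to $\pi_U^H$. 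First I would argue that every hook $H_\ell\neq H$ of a configuration $\mathcal H\in\VHC^H(\pi)$ has both its endpoints entirely within the sheltered region or entirely within the unsheltered region; this follows from the noncrossing Condition 3, together with the fact that the descent tops (the mandated southwest endpoints) are partitioned by $H$ into those strictly between $d_i$ and $j$ and those elsewhere.

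Once that separation is established, the maps $\varphi_U^H$ and $\varphi_S^H$ are simply defined by restricting $\mathcal H$ to the hooks lying in each region and then normalizing the resulting picture (deleting the point $\pi_j$ and, for the unsheltered part, concatenating the two surviving blocks of columns). I would check that the restricted hook families genuinely satisfy Conditions 1, 2, and 3 on $\pi_U^H$ and $\pi_S^H$ respectively: Condition 1 holds because the descent tops of $\pi_U^H$ and $\pi_S^H$ are exactly the descent tops of $\pi$ that fall in the respective regions (here one must verify that removing $\pi_j$ does not create or destroy descents in the unsheltered part, which is where the hypothesis $j>d_k$ is used — the splice point $\pi_i\pi_{j+1}$ is an ascent since $\pi_{j+1}$ is part of the increasing tail and $\pi_i<\pi_j<\pi_{j+1}$). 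Condition 2 is inherited because a point cannot lie above a hook in the sub-picture without lying above it in $\pi$. For Condition 3 in the sheltered part, the subtle point is that the hook $H$ itself becomes the ``sky'' of $\pi_S^H$, so hooks that previously terminated on $H$ now terminate on nothing — this is precisely the recoloring depicted in Figure \ref{Fig14}, and it is the reason the composition splits as claimed.

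To build the inverse, given a pair $(\mathcal H_U,\mathcal H_S)\in\VHC(\pi_U^H)\times\VHC(\pi_S^H)$, I would reinsert the point $\pi_j$ and the hook $H$, reattach the hooks of $\mathcal H_U$ to their original points in $\pi$ (unchanged), and reattach the hooks of $\mathcal H_S$, rerouting any hook of $\mathcal H_S$ whose northeast endpoint was the sky so that it now terminates at $(j,\pi_j)$ along $H$. The main thing to confirm is that this reassembly is well-defined and lands in $\VHC^H(\pi)$, i.e. that the reinserted configuration satisfies all three conditions and that these two procedures are mutually inverse; this is a direct, if slightly tedious, verification that I would handle pictorially as the lemma statement suggests.

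The composition claim then follows by bookkeeping the colors. The sky of $\pi_U^H$ corresponds to the sky of $\pi$ (the region above everything), contributing $q_0$; the hooks $H_1,\ldots,H_{i-1}$ (those with southwest endpoints at descent tops $d_1,\ldots,d_{i-1}$, all unsheltered) retain their colors and their point-counts $q_1,\ldots,q_{i-1}$, giving the composition $(q_0,\ldots,q_{i-1})$ for $\varphi_U^H(\mathcal H)$. Meanwhile the sky of $\pi_S^H$ is the recolored hook $H$, which had color-count $q_i$, and the hooks $H_{i+1},\ldots,H_k$ lie in the sheltered region with counts $q_{i+1},\ldots,q_k$, yielding $(q_i,\ldots,q_k)$ for $\varphi_S^H(\mathcal H)$. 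The main obstacle I anticipate is not any single deep idea but rather the careful case-checking that the noncrossing condition genuinely forbids a hook from straddling the boundary created by $H$; this relies essentially on Condition 2 (no point above a hook) preventing a southwest endpoint in one region from reaching a northeast endpoint in the other past the vertical segment of $H$. I would isolate that straddling-impossibility as the one claim deserving a sentence of genuine justification, and defer the rest to Figure \ref{Fig14}.
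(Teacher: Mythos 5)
Your construction is the one the paper intends: the paper gives no written proof at all (it explicitly uses Figure \ref{Fig14} ``as a substitute for the proof''), and your restriction maps, the sheltered/unsheltered separation of hooks, and the color bookkeeping for the composition claim are exactly that construction. One small slip first: in this formalism a hook's northeast endpoint is always a point of the plot, never ``the sky,'' so your rerouting clause in the inverse construction is vacuous; nothing is ever rerouted, and what changes between $\mathcal H$ and $\varphi_S^H(\mathcal H)$ is only the induced coloring (points colored by $H$ become sky-colored), not the hooks.

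The genuine gap sits precisely in the step you defer as ``direct, if slightly tedious'': verifying that the reassembled configuration lies in $\VHC^H(\pi)$. That verification requires Condition 2 of Definition \ref{Def9} to hold for the reinserted hook $H$ itself, i.e.\ that every entry of $\pi_S^H$ is smaller than $\pi_j$. Your first paragraph asserts that the sheltered entries ``are exactly'' the points below the bar of $H$, but that is a consequence of membership in $\VHC^H(\pi)$ --- available in the forward direction --- and it is exactly what must be proved, and can fail, in the backward direction. In fact, under the stated hypothesis $j>d_k$ alone the statement is false: take $\pi=213546$, with descents $d_1=1$, $d_2=4$, and let $H$ run from $(1,2)$ to $(5,4)$, so $j=5>d_k=4$. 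The point $(4,5)$ lies directly above $H$, so $\VHC^H(\pi)=\emptyset$; yet $\pi_U^H=26$ and $\pi_S^H=135$ each admit the (unique, empty) valid hook configuration, so $\VHC(\pi_U^H)\times\VHC(\pi_S^H)\neq\emptyset$ and no bijection can exist. The same example defeats your claim that the descent tops of $\pi_S^H$ are exactly the descent tops of $\pi$ in the sheltered region: $d_2=4$ is a descent of $\pi$ only because of the removed entry $\pi_5$, while $\pi_S^H=135$ has no descents, so even the number of parts in the induced composition comes out wrong. The failure occurs exactly when $d_k=j-1$, which $j>d_k$ permits. A correct proof must isolate the additional hypothesis that no point of the plot of $\pi$ lies directly above $H$ (equivalently, given what you have, that $j-1$ is not a descent of $\pi$); this holds automatically whenever $\VHC^H(\pi)\neq\emptyset$ and in every application of the lemma in the proof of Proposition \ref{Prop4}, but it does not follow from $j>d_k$. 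With that hypothesis in hand your outline does close up: if some sheltered entry exceeded $\pi_j$, then for $\VHC(\pi_S^H)$ to be nonempty the largest such entry would have to occupy position $j-1$ (a valid hook configuration's underlying permutation must end in its maximum), making $j-1$ a descent, a contradiction; so either both sides are empty or every sheltered entry lies below $\pi_j$ and the reassembly is legitimate.
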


In the following proposition, recall the definition of a type-preserving map from the paragraph preceding Definition \ref{Def10}. Also, recall Remark \ref{Rem7}, which tells us how to define $\swl$ on $132$-avoiding permutations that are not necessarily normalized. 

\begin{proposition}\label{Prop4}
For every permutation $\pi$ that avoids $132$ and $3412$, there is a type-preserving bijection \[\theta_\pi:\VHC(\pi)\to\VHC(\swl(\pi)).\]   
\end{proposition}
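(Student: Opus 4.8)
The plan is to prove Proposition \ref{Prop4} by induction on the length $n$ of $\pi$, building a type-preserving bijection $\theta_\pi$ that is recursive in the same way the map $\swl$ is recursive. Since $\swl(\pi) = \rot^{-1}(\swu(\rot(\pi)))$, the first move would be to decide whether to attack $\swl$ directly or to transport everything through the rotation $\rot$ and reuse the structure of $\widehat\swu$ from the proof of Theorem \ref{Thm14}. I expect the cleaner route is to work directly with $\swl$ on $132$-avoiding permutations and exploit the decomposition of valid hook configurations provided by Lemma \ref{Lem2}. The hypothesis that $\pi$ avoids both $132$ \emph{and} $3412$ is the crucial extra assumption (compared to Theorem \ref{Thm14}, which only needed $\swu$ on all of $\Av(231)$), and I would want to understand geometrically why avoiding $3412$ is exactly what makes the valid hook configurations decompose compatibly with $\swl$.

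First I would set up the recursive structure. For $\pi \in \Av(132)$, one can write $\pi$ in a canonical decomposition reflecting how $\swl$ acts: since $\swl = \rot^{-1}\circ\swu\circ\rot$ and $\swu$ is defined via $\pi = L\oplus(1\ominus R) \mapsto (\swu(L)\oplus 1)\ominus\swu(R)$, the corresponding decomposition for $\swl$ on a $132$-avoiding permutation splits $\pi$ using the position of its largest left-to-right structure; I would make this explicit and record how the descents and descent tops of $\pi$ are distributed among the pieces. The base cases ($n \le 2$, or more safely $\pi$ with no descents, where $\VHC(\pi)$ is a single empty configuration and $\swl$ fixes the statistics by Lemma \ref{Lem1}) are immediate. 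For the inductive step, I would take the topmost hook of a valid hook configuration $\mathcal H \in \VHC(\pi)$ — that is, the hook whose northeast endpoint lies to the right of every descent (such a hook exists and plays the role of the ``outermost'' hook) — and apply Lemma \ref{Lem2} to factor $\mathcal H$ as a pair $(\varphi_U^H(\mathcal H), \varphi_S^H(\mathcal H)) \in \VHC(\pi_U^H)\times\VHC(\pi_S^H)$.

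The heart of the argument is then to show that the unsheltered and sheltered subpermutations $\pi_U^H$ and $\pi_S^H$ again avoid $132$ and $3412$, so that the induction hypothesis applies to each, and that the two induced type-preserving bijections $\theta_{\pi_U^H}$ and $\theta_{\pi_S^H}$ can be reassembled via the inverse of $\varphi^H$ for $\swl(\pi)$ into a valid hook configuration of $\swl(\pi)$ of the same type. The type is preserved because Lemma \ref{Lem2} guarantees the valid composition induced by $\mathcal H$ is the concatenation $(q_0,\ldots,q_{i-1})$ from $\varphi_U^H(\mathcal H)$ followed by $(q_i,\ldots,q_k)$ from $\varphi_S^H(\mathcal H)$; since $\theta$ preserves the \emph{type} of each piece and the composition of $\mathcal H$ is built by concatenation, the multiset of parts is preserved overall. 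I would use Lemma \ref{Lem1} (specifically $\tl(\pi)=\tl(\swl(\pi))$ and $\des(\pi)=\des(\swl(\pi))$) to match up the descent structure of $\swl(\pi)$ with that of $\pi$, ensuring the reassembly lands in the correct target $\VHC(\swl(\pi))$.

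The main obstacle I anticipate is verifying that the $3412$-avoidance hypothesis is preserved under the decomposition and, more subtly, that it is genuinely needed: I would expect that if $\pi$ contained $3412$, the outermost hook could interact with $\swl$ in a way that breaks the separation into unsheltered and sheltered parts (the sliding-left operation could create a $132$-pattern or misalign a hook endpoint), and tracing through a would-be counterexample should reveal that forbidding $3412$ is exactly the condition guaranteeing that every valid hook configuration admits an outermost hook whose northeast endpoint behaves correctly under $\swl$. Concretely, I would need to check that the canonical $\swl$-decomposition of $\swl(\pi)$ matches the $\varphi^H$-decomposition of $\pi$ after sliding — i.e. that $\swl(\pi)_U^{H'}$ and $\swl(\pi)_S^{H'}$ (for the corresponding hook $H'$) are precisely $\swl(\pi_U^H)$ and $\swl(\pi_S^H)$ up to normalization. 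This compatibility is where the geometry of the sliding operator and the $3412$-avoidance must be reconciled, and it is the step I would treat most carefully rather than relegating to ``straightforward to check.''
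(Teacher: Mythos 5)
Your overall skeleton---induction on length, decomposing each valid hook configuration via Lemma \ref{Lem2} along a hook whose northeast endpoint lies beyond all descents, applying the inductive hypothesis to the two pieces, and reassembling with $(\varphi^{H'})^{-1}$---is exactly the paper's strategy. But the compatibility you anticipate, namely that $(\swl(\pi))_U^{H'}$ and $(\swl(\pi))_S^{H'}$ are $\swl(\pi_U^H)$ and $\swl(\pi_S^H)$ up to normalization (unsheltered matching unsheltered, sheltered matching sheltered), is false in the main case, and this is a genuine gap. Write $a=n-\tl(\pi)$ and $\pi'=\swl(\pi)$. When $\pi_a=1$, your matching does hold. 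But when $\pi_a=b\geq 2$, avoidance of $132$ and $3412$ forces
\[\pi=(\delta_{a-b}\ominus(\mu\oplus 1))\oplus\Id_{n-a},\qquad \pi'=\swl(\mu)\oplus\delta_{a-b+1}\oplus\Id_{n-a},\]
where $\delta_{a-b}$ is the decreasing permutation; this is where $3412$ actually enters (it forces the block sitting above $\mu$ to be decreasing), not, as you guessed, anything about the existence or behavior of an outermost hook. Here every valid hook configuration of $\pi$ has a hook $H$ from the descent top $(a-b,b+1)$ to a tail point $(a+\ell,a+\ell)$, and every valid hook configuration of $\pi'$ has a hook $H'$ from $(b,a)$ to a tail point, and one finds that $\swl(\pi_U^H)$ has the same normalization as the \emph{sheltered} piece $(\pi')_S^{H'}$, while $\swl(\pi_S^H)$ has the same normalization as the \emph{unsheltered} piece $(\pi')_U^{H'}$: the two pieces swap roles. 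A length count already rules out your matching: $|\pi_U^H|=n-b-\ell$ whereas $|(\pi')_U^{H'}|=b+\ell-1$, which generically differ, and no other choice of hook in $\pi'$ fixes this, since the $\swl(\mu)$-block always sits in the unsheltered part of $\pi'$. Consequently the reassembly must insert the swap $\delta(\mathcal H_1,\mathcal H_2)=(\mathcal H_2,\mathcal H_1)$ before applying $(\varphi^{H'})^{-1}$. The map is still type-preserving only because a type is a \emph{partition}: the multiset of parts of a concatenation of compositions does not see the order of concatenation. Your own remark about concatenation is what saves the argument, but as written your proof would never discover that the order changes.

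Two smaller points. First, ``the topmost hook of $\mathcal H$'' needs justification: a valid hook configuration can contain several hooks whose northeast endpoints lie to the right of every descent (e.g.\ $\pi=214356$ with hooks $(1,2)\to(6,6)$ and $(3,4)\to(5,5)$), so you would have to prove such hooks are nested and take the outermost one; moreover, since your chosen hook depends on $\mathcal H$ rather than on $\pi$, proving that the assembled global map is a bijection requires extra care. The paper avoids both issues by fixing a descent top determined by $\pi$ alone ($(a-1,\pi_{a-1})$ when $\pi_a=1$, and $(a-b,b+1)$ when $\pi_a=b\geq 2$), observing that every valid hook configuration contains a hook with that southwest endpoint whose northeast endpoint is forced into the increasing tail, and partitioning $\VHC(\pi)$ according to that northeast endpoint. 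Second, you should dispose of the case $\tl(\pi)=0$ at the outset: there $\VHC(\pi)=\VHC(\swl(\pi))=\emptyset$, since a permutation admitting a valid hook configuration must end in its largest entry; your base cases do not cover this.
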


\begin{proof}
In order to prove the proposition for a permutation $\pi$, it suffices to prove the proposition for the normalization of $\pi$; indeed, it is then easy to ``unnormalize" the relevant permutations and valid hook configurations. Thus, we may assume $\pi\in\Av_n(132,3412)$. To ease notation, let $\pi'=\pi'_1\cdots\pi'_n=\swl(\pi)$. The proof is by induction on $n$. If $n\leq 2$, then the conclusion is obvious because $\pi=\pi'$. Thus, we may assume that $n\geq 3$ and that $\pi\neq \pi'$. Let $a=n-\tl(\pi)$. This means that $\pi_r=r$ for all $r\in\{a+1,\ldots,n\}$ and that $\pi_{a}\neq a$. Lemma \ref{Lem1} tells us that $a=n-\tl(\pi')$. If $a=n$, then $\VHC(\pi)$ and $\VHC(\pi')$ are both empty (a permutation that has a valid hook configuration must end with its largest entry), so there is nothing to do. Thus, we may assume $a\leq n- 1$. It is straightforward to check that $a\geq 3$ because $\pi\neq\pi'$. We now consider two cases. 

First, assume $\pi_a=1$. The point $(a-1,\pi_{a-1})$ must be a descent top of the plot of $\pi$, so it follows from Definition \ref{Def9} that every valid hook configuration of $\pi$ has a hook with southwest endpoint $(a-1,\pi_{a-1})$. It is not difficult to check that $\pi_r'=\pi_r$ for all $r\in\{a-1,\ldots,n\}$, so every valid hook configuration of $\pi'$ has a hook with southwest endpoint $(a-1,\pi_{a-1})$. Fix $j\in\{a+1,\ldots,n\}$. Let $H$ be the hook of $\pi$ with southwest endpoint $(a-1,\pi_{a-1})$ and northeast endpoint $(j,j)$. Let $H'$ be the hook of $\pi'$ with southwest endpoint $(a-1,\pi_{a-1})$ and northeast endpoint $(j,j)$. Although $H$ and $H'$ have the same endpoints, we think of them as being distinct since they are hooks of different permutations. We will show that there is a type-preserving bijection \[\theta_\pi^H:\VHC^H(\pi)\to\VHC^{H'}(\pi').\] This will complete the proof in this case since we can simply combine the bijections $\theta_\pi^H$ for all possible choices of $H$ (i.e., all possible choices of $j$) to form the desired bijection $\theta_\pi$. 

Consider the $H$-unsheltered and $H$-sheltered subpermutations $\pi_U^H=\pi_1\cdots\pi_{a-1}\pi_{j+1}\cdots\pi_n$ and $\pi_S^H=\pi_a\cdots\pi_{j-1}=1(a+1)(a+2)\cdots(j-1)$. Similarly, consider the $H'$-unsheltered and $H'$-sheltered subpermutations $(\pi')_U^{H'}=\pi'_1\cdots\pi'_{a-1}\pi'_{j+1}\cdots\pi'_n$ and $(\pi')_S^{H'}=\pi'_a\cdots\pi'_{j-1}=1(a+1)(a+2)\cdots(j-1)=\pi_S^H$. It is straightforward to verify that $\swl(\pi_U^H)=(\pi')_U^{H'}$ and $\swl(\pi_S^H)=\pi_S^H=(\pi')_S^{H'}$. Therefore, we know by induction that there exist type-preserving bijections \[\theta_{\pi_U^H}:\VHC(\pi_U^H)\to\VHC((\pi')_U^{H'})\quad\text{and}\quad\theta_{\pi_S^H}:\VHC(\pi_S^H)\to\VHC((\pi')_S^{H'}).\] Invoking Lemma \ref{Lem2}, we find the maps 
\[\VHC^H(\pi)\xrightarrow{\varphi^H}\VHC(\pi_U^H)\times\VHC(\pi_S^H)\xrightarrow{\theta_{\pi_U^H}\times\theta_{\pi_S^H}}\VHC((\pi')_U^{H'})\times\VHC((\pi')_S^{H'})\xrightarrow{(\varphi^{H'})^{-1}}\VHC^{H'}(\pi').\] The composite map $(\varphi^{H'})^{-1}\circ(\theta_{\pi_U^H}\times\theta_{\pi_S^H})\circ\varphi^H$ is a bijection. Using the last part of Lemma \ref{Lem2} along with the fact that the maps $\theta_{\pi_U^H}$ and $\theta_{\pi_S^H}$ are type-preserving, we find that this composite map is also type-preserving. Hence, we can set $\theta_\pi^H=(\varphi^{H'})^{-1}\circ(\theta_{\pi_U^H}\times\theta_{\pi_S^H})\circ\varphi^H$ to complete the proof in the case $\pi_a=1$. Figure \ref{Fig4} illustrates the construction of $\theta_\pi^H$ in this case. 

\begin{figure}[h]
\begin{center}
\includegraphics[width=.5\linewidth]{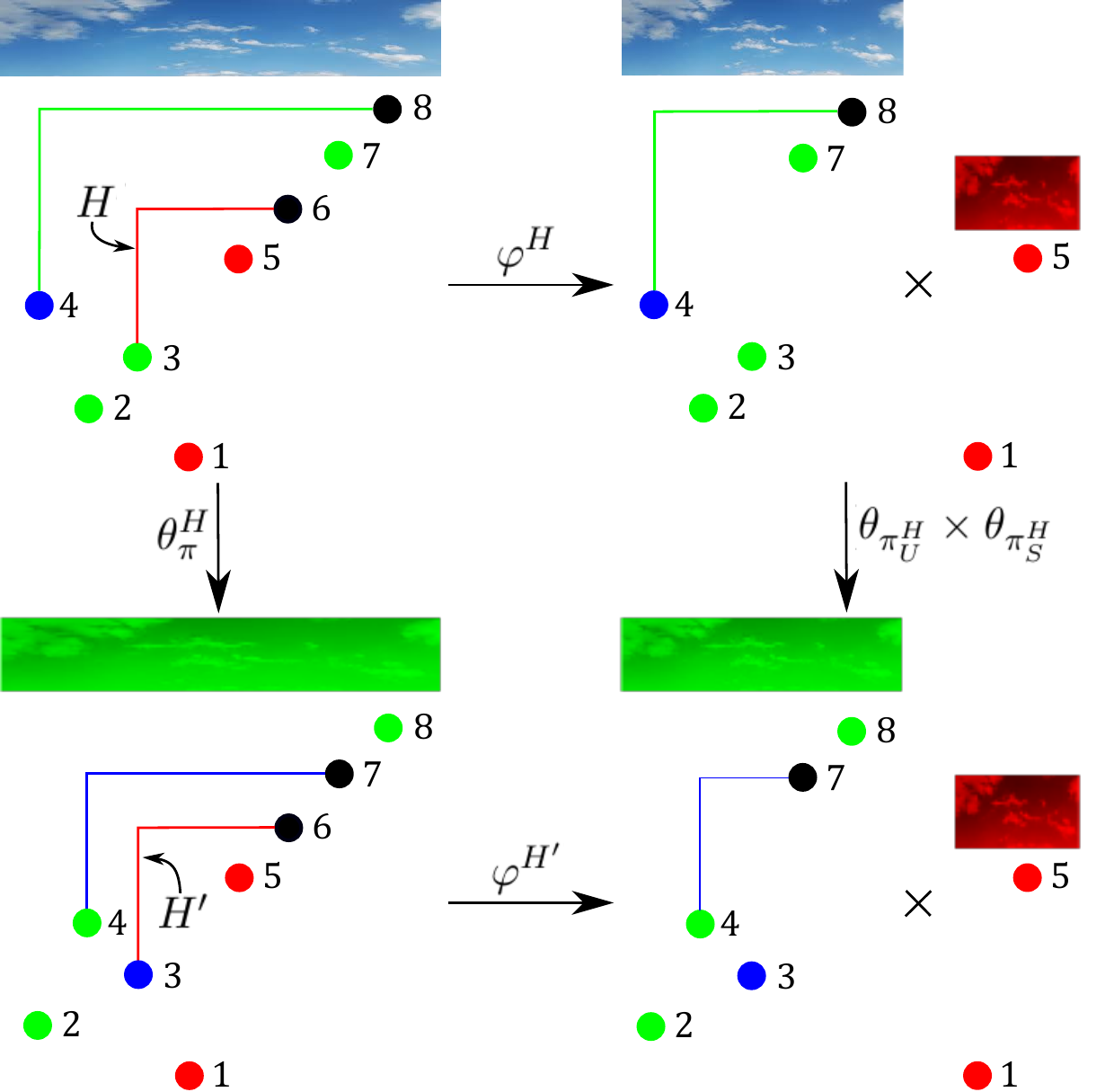}
\caption{An example illustrating the construction of $\theta_\pi^H$ in the case in which $\pi_a=1$. The transformation $\theta_{\pi_U^H}\times\theta_{\pi_S^H}$ is defined recursively; we omit the steps transforming the top right section into the bottom right section in this example. This transformation sometimes interchanges a sky with a hook, which is why one of the skies in the bottom right section is green.}
\label{Fig4}
\end{center}  
\end{figure}

For the second case, assume $\pi_a=b\geq 2$. Because $\pi$ avoids $132$, we can write $\pi=(\lambda\ominus(\mu\oplus 1))\oplus\Id_{n-a}$, where $\Id_{n-a}=123\cdots (n-a)$ is the identity permutation in $S_{n-a}$, $\mu\in S_{b-1}$, and $\lambda\in S_{a-b}$. Note that $\mu$ is nonempty because $b\geq 2$. Since $n-a$ is the tail length of $\pi$, $b\leq a-1$. This means that $\lambda$ is also nonempty. The permutation $\lambda$ cannot have an ascent because $\pi$ avoids $3412$. Therefore, $\lambda=\delta_{a-b}$ is the decreasing permutation in $S_{a-b}$. This allows us to write 
\begin{equation}\label{Eq21}
\pi=(\delta_{a-b}\ominus(\mu\oplus 1))\oplus\Id_{n-a}.
\end{equation} It is straightforward to check that \[\pi'=\swl(\mu)\oplus\delta_{a-b+1}\oplus\Id_{n-a},\] where $\delta_{a-b+1}$ is the decreasing permutation in $S_{a-b+1}$. The point $(a-b,b+1)$ is a descent top of the plot of $\pi$, and the point $(b,a)$ is a descent top of the plot of $\pi'$. Fix $\ell\in[n-a]$. Let $H$ be the hook of $\pi$ with southwest endpoint $(a-b,b+1)$ and northeast endpoint $(a+\ell,a+\ell)$. Let $H'$ be the hook of $\pi'$ with southwest endpoint $(b,a)$ and northeast endpoint $(n+1-\ell,n+1-\ell)$. We will show that there is a type-preserving bijection \[\theta_\pi^H:\VHC^H(\pi)\to\VHC^{H'}(\pi').\] This will complete the proof in this case since we can simply combine the bijections $\theta_\pi^H$ for all possible choices of $H$ (i.e., all possible choices of $\ell$) to form the desired bijection $\theta_\pi$. 

Consider the $H$-unsheltered and $H$-sheltered subpermutations \[\pi_U^H=\pi_1\cdots\pi_{a-b}\pi_{a+\ell+1}\cdots\pi_n=a(a-1)\cdots(b+1)(a+\ell+1)(a+\ell+2)\cdots n\] and \[\pi_S^H=\pi_{a-b+1}\cdots\pi_{a+\ell-1}=\mu\, b(a+1)(a+2)\cdots(a+\ell-1).\] Similarly, consider the $H'$-unsheltered and $H'$-sheltered subpermutations \[(\pi')_U^{H'}=\pi'_1\cdots\pi'_b\pi'_{n+2-\ell}\cdots\pi'_n=\swl(\mu)\,a(n+2-\ell)(n+3-\ell)\cdots n\] and \[(\pi')_S^{H'}=\pi'_{b+1}\cdots\pi'_{n-\ell}=(a-1)(a-2)\cdots b(a+1)(a+2)\cdots(n-\ell).\] One can verify that $\swl(\pi_U^H)$ and $(\pi')_S^{H'}$ have the same normalization, so there is a natural type-preserving bijection $\omega_1:\VHC(\swl(\pi_U^H))\to\VHC((\pi')_S^{H'})$. We know by induction that there is a type-preserving bijection $\theta_{\pi_U^H}:\VHC(\pi_U^H)\to\VHC(\swl(\pi_U^H))$, so we obtain a type-preserving bijection $\psi_1=\omega_1\circ\theta_{\pi_U^H}:\VHC(\pi_U^H)\to\VHC((\pi')_S^{H'})$. We can also check that $\swl(\pi_S^H)$ and $(\pi')_U^{H'}$ have the same normalization, so a similar argument produces type-preserving bijections $\omega_2:\VHC(\swl(\pi_S^H))\to\VHC((\pi')_U^{H'})$ and $\psi_2=\omega_2\circ\theta_{\pi_S^H}:\VHC(\pi_S^H)\to\VHC((\pi')_U^{H'})$. 

Invoking Lemma \ref{Lem2}, we find the maps 
\[\VHC^H(\pi)\xrightarrow{\varphi^H}\VHC(\pi_U^H)\times\VHC(\pi_S^H)\xrightarrow{\psi_1\times\psi_2}\VHC((\pi')_S^{H'})\times\VHC((\pi')_U^{H'})\] \[\xrightarrow{\delta}\VHC((\pi')_U^{H'})\times\VHC((\pi')_S^{H'})\xrightarrow{(\varphi^{H'})^{-1}}\VHC^{H'}(\pi'),\] where $\delta$ is defined by $\delta(\mathcal H_1,\mathcal H_2)=(\mathcal H_2,\mathcal H_1)$. The composite map $(\varphi^{H'})^{-1}\circ\delta\circ(\psi_1\times\psi_2)\circ\varphi^H$ is a bijection. Using the last part of Lemma \ref{Lem2} along with the fact that the maps $\psi_1$ and $\psi_2$ are type-preserving, we find that this composite map is also type-preserving. Hence, we can set $\theta_\pi^H=(\varphi^{H'})^{-1}\circ\delta\circ(\psi_1\times\psi_2)\circ\varphi^H$ to complete the proof of the case in which $\pi_a\geq 2$. Figure \ref{Fig5} illustrates the construction of $\theta_\pi^H$ in this case. 
\end{proof}   
\begin{figure}[h]
\begin{center}
\includegraphics[width=152mm]{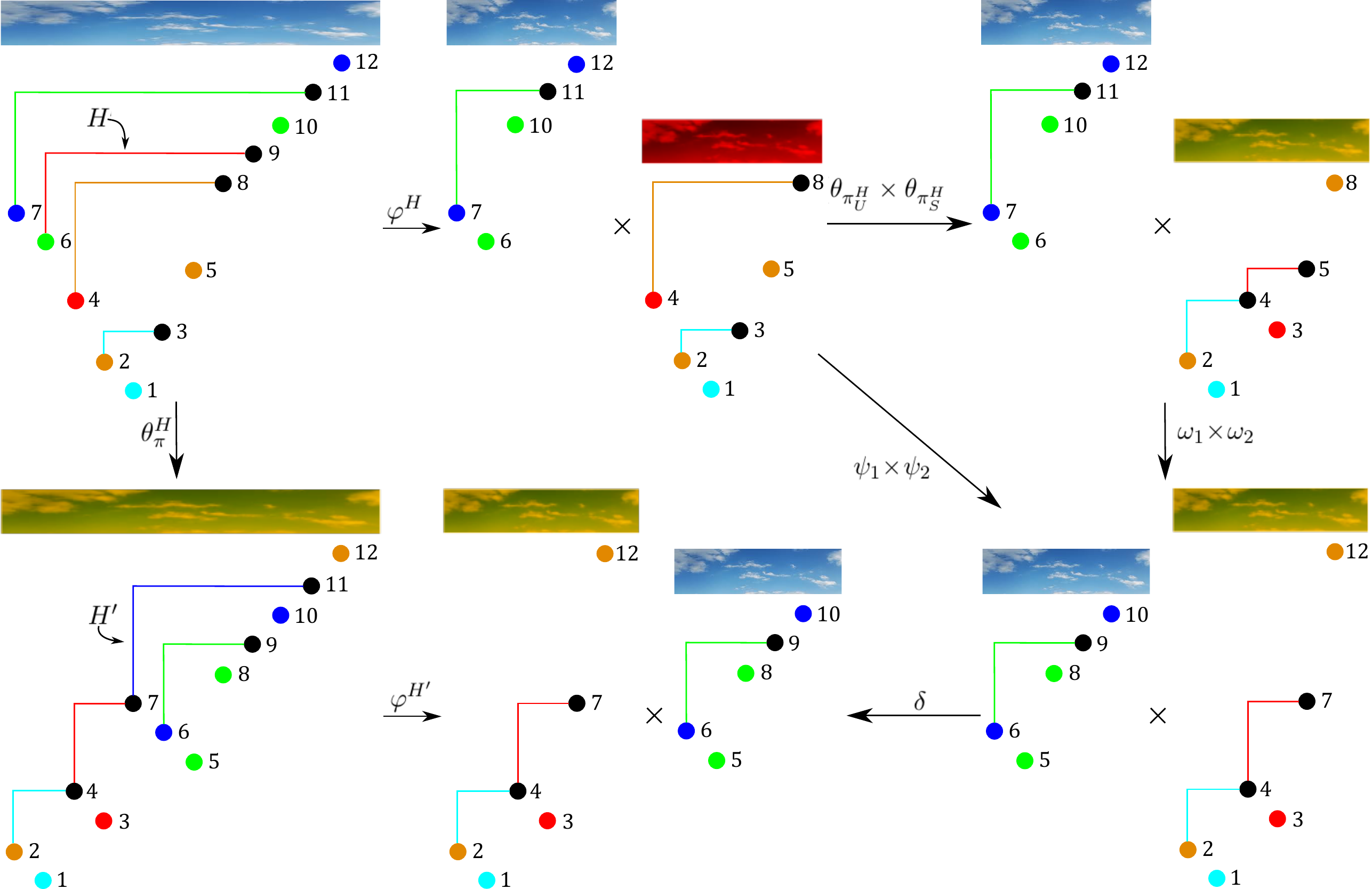}
\caption{An example illustrating the construction of $\theta_\pi^H$ in the case in which $\pi_a\geq 2$. The transformation $\theta_{\pi_U^H}\times\theta_{\pi_S^H}$ is defined recursively; we omit the steps transforming the top middle section into the top right section in this example. This transformation sometimes interchanges a sky with a hook, which is why one of the skies in the top right section is orange.}
\label{Fig5}
\end{center}  
\end{figure}

Propositions \ref{Prop3} and \ref{Prop4} allow us to produce several examples of strong fertility Wilf equivalences. The following theorem exhibits infinitely many such examples, but there could certainly be others. The proof below does not directly cite Proposition \ref{Prop3}, but it does cite the proof of Theorem \ref{Thm12}, which, in turn, makes heavy use of Proposition \ref{Prop3}. Recall the definition of $\rot$ from the beginning of this section and the definition of $\chi_m$ from \eqref{Eq22}. For $\pi\in S_n$, let $\widetilde\chi_m(\pi)=\rot^{-1}(\chi_m(\rot(\pi)))$. 

\begin{theorem}\label{Thm13}
Preserving the preceding notation, let \[\mathcal B=\bigcup_{m\geq 0}\{\widetilde\chi_m(1),\widetilde\chi_m(21),\widetilde\chi_m(2431)\}.\]
Let $\tau^{(1)},\tau^{(2)},\ldots$ be a (possibly empty) list of patterns taken from the set $\mathcal B$, and let $\tau'^{(i)}=\swl(\tau^{(i)})$ for all $i$. The permutation classes \[\Av(132,3412,\tau^{(1)},\tau^{(2)},\ldots)\quad\text{and}\quad\Av(312,1342,\tau'^{(1)},\tau'^{(2)},\ldots)\] are strongly fertility Wilf equivalent. 
\end{theorem}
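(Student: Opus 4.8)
The plan is to combine the two results that are genuinely doing the work---Proposition \ref{Prop4}, which supplies a type-preserving bijection $\theta_\pi\colon\VHC(\pi)\to\VHC(\swl(\pi))$ for every $\pi\in\Av(132,3412)$, and the set equality that is proved \emph{inside} the proof of Theorem \ref{Thm12}. Concretely, the proof splits into two stages. Stage one is to establish the set equality
\[\swl(\Av(132,3412,\tau^{(1)},\tau^{(2)},\ldots))=\Av(312,1342,\tau'^{(1)},\tau'^{(2)},\ldots).\]
Stage two is to glue the maps $\theta_\pi$ together into a single type-preserving bijection on valid hook configurations, in exact analogy with the way $\widehat\swu$ was used in the proof of Theorem \ref{Thm14}.

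For stage one I would exploit the conjugation identity $\swl=\rot^{-1}\circ\swu\circ\rot$ from Definition \ref{Def8}, which rewrites as $\rot\circ\swl=\swu\circ\rot$ on $\Av(132)$. Applying $\rot$ to the left-hand side above and using that $\rot$ is a pattern-preserving bijection with $\rot(132)=231$ and $\rot(3412)=2143$, one obtains
\[\rot\bigl(\swl(\Av(132,3412,\tau^{(1)},\ldots))\bigr)=\swu\bigl(\Av(231,2143,\rot(\tau^{(1)}),\rot(\tau^{(2)}),\ldots)\bigr).\]
The family $\widetilde\chi_m$ was defined precisely so that $\rot$ intertwines it with $\chi_m$, namely $\rot(\widetilde\chi_m(\mu))=\chi_m(\rot(\mu))$; hence each $\rot(\tau^{(i)})$ has the form $\chi_m(\rot(\mu))$ with $\mu\in\{1,21,2431\}$, and, together with the extra pattern $2143=\chi_0(2143)$, the whole list is drawn from the set $\mathcal A$ of Theorem \ref{Thm12}. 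The set equality proved inside that theorem then gives
\[\swu\bigl(\Av(231,2143,\rot(\tau^{(1)}),\ldots)\bigr)=\Av(132,3241,\swu(\rot(\tau^{(1)})),\ldots),\]
and since $\rot(312)=132$, $\rot(1342)=3241$, and $\rot(\tau'^{(i)})=\rot(\swl(\tau^{(i)}))=\swu(\rot(\tau^{(i)}))$, the right-hand side is exactly $\rot(\Av(312,1342,\tau'^{(1)},\ldots))$. Because $\rot$ is injective, stage one follows.

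Stage two is then routine and mirrors the proof of Theorem \ref{Thm14}. Every $\pi$ in the domain class lies in $\Av(132,3412)$, so Proposition \ref{Prop4} provides a type-preserving bijection $\theta_\pi\colon\VHC(\pi)\to\VHC(\swl(\pi))$. Valid hook configurations remember their underlying permutation (the last sentence of Definition \ref{Def9}), so the sets $\VHC(\pi)$ are pairwise disjoint and partition $\VHC(\Av(132,3412,\tau^{(1)},\ldots))$; since $\swl$ is a bijection (Lemma \ref{Lem1}) carrying the domain class onto the codomain class by stage one, the sets $\VHC(\swl(\pi))$ likewise partition $\VHC(\Av(312,1342,\tau'^{(1)},\ldots))$. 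Taking the disjoint union $\theta=\bigsqcup_\pi\theta_\pi$ produces the type-preserving bijection demanded by Definition \ref{Def10}.

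The main obstacle lies entirely in stage one, and specifically in the pattern bookkeeping: one must check that $\rot$ matches the generators $1,21,2431$ of $\mathcal B$ and the boundary patterns $132,3412$ with the corresponding base patterns of $\mathcal A$ and the patterns $231,2143$. The generators $1$ and $21$ are immediate, but the generator $2431$ requires a genuine containment analysis of exactly the same flavor as the $1423$ and $2143$ base cases in the proof of Theorem \ref{Thm12}; this is the only place where new casework is unavoidable, the remaining analytic content being already packaged in Theorem \ref{Thm12} (through Proposition \ref{Prop3}) and in the decomposition argument of Proposition \ref{Prop4}. I would also stress that the appearance of $3412$ among the always-avoided patterns is essential rather than cosmetic: it is precisely the hypothesis that makes Proposition \ref{Prop4} applicable to every member of the domain class, and it is what rotates to the pattern $2143$ that is fed into Theorem \ref{Thm12}.
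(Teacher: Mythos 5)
Your proposal follows essentially the same route as the paper's proof: the paper likewise establishes the set equality $\swl(\Av(132,3412,\tau^{(1)},\tau^{(2)},\ldots))=\Av(312,1342,\tau'^{(1)},\tau'^{(2)},\ldots)$ by conjugating with $\rot$ and invoking the equalities already proved inside Theorem \ref{Thm12} (for the rotated patterns, which lie in $\mathcal A$, together with $2143$), and then defines $\theta$ by gluing the type-preserving bijections $\theta_\pi$ of Proposition \ref{Prop4} over the underlying permutations exactly as you describe. The one small discrepancy is your claim that the generator $2431$ forces new containment casework: in the paper this generator needs no fresh analysis, since $\rot$ carries it to a base pattern of $\mathcal A$ whose casework was already done in the proof of Theorem \ref{Thm12}, so your stage one is, if anything, slightly more laborious than necessary.
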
 

\begin{proof}
By Lemma \ref{Lem1}, we know that $\swl:\Av(132)\to\Av(312)$ is a bijection. We will show that \[\swl(\Av(132,3412,\tau^{(1)},\tau^{(2)},\ldots))=\Av(312,1342,\tau'^{(1)},\tau'^{(2)},\ldots).\] This will allow us to define \[\theta:\VHC(\Av(132,3412,\tau^{(1)},\tau^{(2)},\ldots))\to \VHC(\Av(132,1342,\tau'^{(1)},\tau'^{(2)},\ldots))\] by \[\theta(\mathcal H)=\theta_{\pi}(\mathcal H),\] where $\pi$ is the underlying permutation of $\mathcal H$ and $\theta_{\pi}:\VHC(\pi)\to \VHC(\swl(\pi))$ is the type-preserving bijection from Proposition \ref{Prop4}. The map $\theta$ will be a type-preserving bijection, so this will complete the proof. By taking intersections, we find that it suffices to prove that 
\[
\swl(\Av(132,3412,\tau))=\Av(312,1342,\swl(\tau))
\] for all $\tau\in \mathcal B$. 

Fix $\tau\in\mathcal B$. It is straightforward to check that $\rot(\tau)\in \mathcal A$, where $\mathcal A$ is the set in Theorem \ref{Thm12}. We saw in the proof of Theorem \ref{Thm12} that \[\swu(\Av(231,\rot(\tau)))=\Av(132,\swu(\rot(\tau))).\] We also saw that $\swu(\Av(231,2143))=\Av(132,3241)$, so \[\swu(\Av(231,2143,\rot(\tau)))=\Av(132,3241,\swu(\rot(\tau))).\] Since $\swl=\rot^{-1}\circ\swu\circ\rot$ by definition, we have \[\swl(\Av(132,3412,\tau))=\rot^{-1}\circ\swu(\Av(231,2143,\rot(\tau)))\] \[=\rot^{-1}(\Av(132,3241,\swu(\rot(\tau))))=\Av(312,1342,\swl(\tau)). \qedhere\] 
\end{proof}

\begin{corollary}\label{Cor2}
For every $n\geq 1$, the statistics $\des$ and $\peak$ are jointly equidistributed on $s^{-1}(\Av_n(132,231))$ and $s^{-1}(\Av_n(231,312))$. In particular, $\Av(132,231)$ and $\Av(231,312)$ are fertility Wilf equivalent. 
\end{corollary}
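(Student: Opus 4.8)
The plan is to obtain Corollary~\ref{Cor2} as the special case of Theorem~\ref{Thm13} in which the auxiliary list $\tau^{(1)},\tau^{(2)},\ldots$ is empty. Taking the empty list, Theorem~\ref{Thm13} asserts that the permutation classes $\Av(132,3412)$ and $\Av(312,1342)$ are strongly fertility Wilf equivalent. To apply this I first need to identify these two classes with the two classes named in the corollary, namely $\Av(132,231)$ and $\Av(231,312)$. The key observation is that $\Av(132,231)=\Av(132,3412)$ and $\Av(231,312)=\Av(312,1342)$. Each of these is an easy pattern-containment check: for instance, a $132$-avoiding permutation contains $231$ if and only if it contains $3412$ (the forbidden ``$231$ then descent'' configuration in a $132$-avoider is precisely a $3412$ occurrence), and dually for the $312$-avoiding side. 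Thus the strong fertility Wilf equivalence of Theorem~\ref{Thm13} applies verbatim to $\Av(132,231)$ and $\Av(231,312)$.

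Once the identification is in place, I would invoke Proposition~\ref{Prop5}. That proposition states that strong fertility Wilf equivalence of two permutation classes implies that $\des$ and $\peak$ are jointly equidistributed on their stack-sorting preimages $s^{-1}(\Av_n(\cdots))$ for every $n\geq 0$, and in particular that the two classes are fertility Wilf equivalent. Applying it to the pair $\Av(132,231)$ and $\Av(231,312)$ yields exactly the two assertions of the corollary: the joint equidistribution of $\des$ and $\peak$ on $s^{-1}(\Av_n(132,231))$ and $s^{-1}(\Av_n(231,312))$, and, as the ``in particular'' clause, their fertility Wilf equivalence. The passage from $n\geq 1$ to $n\geq 0$ is harmless since the $n=0$ case is trivial.

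The only step requiring genuine (though routine) care is the pattern-class identification $\Av(132,231)=\Av(132,3412)$ and $\Av(231,312)=\Av(312,1342)$; everything else is a direct citation of results already established in the excerpt. I expect this identification to be the main---indeed essentially the sole---obstacle, and it is a short finite combinatorial verification rather than a conceptual difficulty. Alternatively, one could bypass it entirely by noting that $\swl$ restricts to a bijection $\Av(132,231)\to\Av(231,312)$ (the third bullet of Lemma~\ref{Lem1}) and checking directly that $\Av(132,231)\subseteq\Av(3412)$ and $\Av(231,312)\subseteq\Av(1342)$, so that Proposition~\ref{Prop4} supplies the requisite type-preserving bijection $\theta_\pi:\VHC(\pi)\to\VHC(\swl(\pi))$ for each $\pi\in\Av(132,231)$; assembling these over all $\pi$ gives the type-preserving bijection on $\VHC(\Av(132,231))$ needed for strong fertility Wilf equivalence, after which Proposition~\ref{Prop5} finishes the argument as before.
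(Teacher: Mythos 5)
Your primary route fails at the class identification: the claimed equalities $\Av(132,231)=\Av(132,3412)$ and $\Av(231,312)=\Av(312,1342)$ are false. The pattern containment goes the wrong way for your purposes: $3412$ contains $231$ (witness the subsequence $341$), so avoiding $231$ forces avoiding $3412$, which gives $\Av(132,231)\subseteq\Av(132,3412)$; but the reverse inclusion fails, since the permutation $231$ itself (or $2314$, if you want length at least four) avoids both $132$ and $3412$ while containing $231$. The same counterexample shows $\Av(231,312)\subsetneq\Av(312,1342)$. These larger classes are genuinely different objects, not just cosmetically different presentations: the paper distinguishes them explicitly in Section \ref{Sec:Conclusion}, where the chain of equalities involving $s^{-1}(\Av_n(132,3412))$ and $s^{-1}(\Av_n(312,1342))$ is presented as an \emph{enlargement} of the Boolean--Catalan chain \eqref{Eq1}, and their enumeration is left open as Problem \ref{Prob1}. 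So the empty-list specialization of Theorem \ref{Thm13} proves strong fertility Wilf equivalence for the wrong pair of classes, and the corollary does not follow from it; the ``short finite combinatorial verification'' you expected to be routine is in fact impossible.

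Your fallback paragraph, however, is correct and is essentially the paper's proof in unpackaged form. The paper takes the list in Theorem \ref{Thm13} to consist of the single pattern $\tau^{(1)}=231=\widetilde\chi_2(1)\in\mathcal B$ (noting $\swl(231)=231$), which yields that $\Av(132,3412,231)$ and $\Av(312,1342,231)$ are strongly fertility Wilf equivalent; here the containments run in the useful direction ($3412$ and $1342$ each contain $231$), so the patterns $3412$ and $1342$ become redundant and these classes are exactly $\Av(132,231)$ and $\Av(231,312)$, after which Proposition \ref{Prop5} finishes. Your alternative --- using the third bullet of Lemma \ref{Lem1}, observing $\Av(132,231)\subseteq\Av(132,3412)$ so that Proposition \ref{Prop4} applies to each $\pi\in\Av(132,231)$, and assembling the type-preserving bijections $\theta_\pi$ into one bijection $\VHC(\Av(132,231))\to\VHC(\Av(231,312))$ --- is the same argument with Theorem \ref{Thm13} bypassed. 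If you lead with that paragraph and delete the false identification, the proof is complete and matches the paper's.
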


\begin{proof}
Take the sequence $\tau^{(1)},\tau^{(2)},\ldots$ in Theorem \ref{Thm13} to consist of the single pattern $231=\widetilde\chi_2(1)$ to find that the permutation classes $\Av(132,3412,231)=\Av(132,231)$ and \linebreak $\Av(312,1342,231)=\Av(231,312)$ are strongly fertility Wilf equivalent. Now use Proposition \ref{Prop5}.  
\end{proof}

\begin{remark}\label{Rem5}
Corollary \ref{Cor2}, which follows from a very special case of Theorem \ref{Thm13}, settles the conjecture from \cite{DefantClass} stating that $\Av(132,231)$ and $\Av(231,312)$ are fertility Wilf equivalent. If we were to replace the pattern $3412$ in Theorem \ref{Thm13} with $231$, we could simplify the proof of that theorem by immediately excluding the first of the two cases (the case $\pi_a=1$). This would yield a weaker theorem, but even this weaker theorem would suffice to prove the conjecture from \cite{DefantClass} and to produce infinitely many examples of strong fertility Wilf equivalence. We also could obtain infinitely many examples of strong fertility Wilf equivalence if $\mathcal B$ was replaced by $\bigcup_{m\geq 0}\{\widetilde \chi_m(1)\}$; the other elements of $\mathcal B$ just yield more examples! \hspace*{\fill}$\lozenge$ 
\end{remark}

For many of the possible choices of the sequence $\tau^{(1)},\tau^{(2)},\ldots$ in Theorem \ref{Thm13}, the permutation classes $\Av(132,3412,\tau^{(1)},\tau^{(2)},\ldots)$ and $\Av(312,1342,\tau'^{(1)},\tau'^{(2)},\ldots)$ are not postorder Wilf equivalent. In fact, they are not even binary postorder Wilf equivalent (recall the definition from Remark \ref{Rem4}) if $3124\in\Av(132,3412,\tau^{(1)},\tau^{(2)},\ldots)$. To see this, assume $3124\in\Av(132,3412,\tau^{(1)},\tau^{(2)},\ldots)$. We can check that $3124$ is the only permutation $\pi\in\Av_4(132,3412,\tau^{(1)},\tau^{(2)},\ldots)$ such that $P^{-1}(\pi)\neq\emptyset$ and $\swl(\pi)\neq\pi$. We have $\swl(3124)=1324$, so \[P^{-1}(\Av_4(312,1342,\tau'^{(1)},\tau'^{(2)},\ldots))\] \[=(P^{-1}(\Av_4(132,3412,\tau^{(1)},\tau^{(2)},\ldots))\setminus P^{-1}(3124))\cup P^{-1}(1324).\] If $\Av(132,3412,\tau^{(1)},\tau^{(2)},\ldots)$ and $\Av(312,1342,\tau'^{(1)},\tau'^{(2)},\ldots)$ were binary postorder Wilf equivalent, then there would be a skeleton-preserving bijection from \[P^{-1}(3124)\cap\DPT^{(2)}=I^{-1}(s^{-1}(3124))=\{I^{-1}(3412),I^{-1}(3421)\}\] to \[P^{-1}(1324)\cap\DPT^{(2)}=I^{-1}(s^{-1}(1324))=\{I^{-1}(3142),I^{-1}(1342)\}.\] This would imply that the statistics in \eqref{Eq8} other than $\zeil$ would be jointly equidistributed on $\{3412,3421\}$ and $\{3142,1342\}$. However, $\rmax$ is not even equidistributed on $\{3412,3421\}$ and $\{3142,1342\}$ because both permutations in $\{3142,1342\}$ have $2$ right-to-left maxima while  $3421$ has $3$.

It turns out that the statistic $\zeil$ \emph{is} equidistributed on $s^{-1}(\Av_n(132,3412,\tau^{(1)},\tau^{(2)},\ldots))$ and $s^{-1}(\Av_n(312,1342,\tau'^{(1)},\tau'^{(2)},\ldots))$ for all $n\geq 1$! This is surprising in light of Lemma \ref{Lem4} because we just saw that $\rmax$ is not necessarily equidistributed on these sets. Indeed, in our proof of Corollary \ref{Cor3}, we used the fact that the map $\eta^{\#}$ preserves $\rmax$ in order to deduce that it preserves $\zeil$. We will actually prove that $\des,\peak$, and $\zeil$ are jointly equidistributed on $s^{-1}(\Av_n(132,3412,\tau^{(1)},\tau^{(2)},\ldots))$ and $s^{-1}(\Av_n(312,1342,\tau'^{(1)},\tau'^{(2)},\ldots))$ for all $n\geq 1$; this gives much more than the mere equidistribution of $\zeil$ alone. 

For $\pi\in S_n$, let \[\mathcal Z_c(\pi)=\{\sigma\in s^{-1}(\pi):\zeil(\sigma)=c\}\quad\text{and}\quad\mathcal Z_{\geq c}(\pi)=\bigcup_{i\geq c}\mathcal Z_i(\pi).\] Let \[\mathcal Z_c^{a,b}(\pi)=\{\sigma\in \mathcal Z_c(\pi):\des(\sigma)=a,\peak(\sigma)=b\}\quad\text{and}\quad\mathcal Z_{\geq c}^{a,b}(\pi)=\bigcup_{i\geq c}\mathcal Z_i^{a,b}(\pi).\] Recall the tail length statistic $\tl$ from Section \ref{Sec:Stats}. For each $n\geq 1$ and $\lambda=\lambda_1\cdots\lambda_n\in S_n$, let $\lambda^*\in S_{n-1}$ be the normalization of $\lambda_1\cdots\lambda_{n-1}$. Let $D_\ell$ denote the set of permutations of which $\ell$ is a descent. 

\begin{lemma}\label{Lem3}
Preserve the notation from above. Let $\pi\in S_n$ for some $n\geq 3$. Suppose $a,b,c$ are nonnegative integers such that $1\leq c\leq \tl(\pi)-1$. The map $\sigma\mapsto\sigma^*$ induces bijections 
\begin{align*}
\mathcal Z_c^{a,b}(\pi)\setminus D_{n-1}&\to\mathcal Z_{\geq c}^{a,b}(\pi^*), \\ (\mathcal Z_c^{a,b}(\pi)\cap D_{n-1})\setminus D_{n-2}&\to\mathcal Z_{c-1}^{a-1,b-1}(\pi^*), \\ \mathcal Z_c^{a,b}(\pi)\cap D_{n-1}\cap D_{n-2}&\to\mathcal Z_{c-1}^{a-1,b}(\pi^*).
\end{align*}
\end{lemma}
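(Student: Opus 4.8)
The plan is to analyze the map $\sigma\mapsto\sigma^*$ directly, using Lemma \ref{Lem4} to control $\zeil$ and the decreasing binary plane tree description of $s$ to control the underlying permutation. First I would extract the rigid structure forced on any $\sigma\in\mathcal Z_c(\pi)$. Since $c\leq\tl(\pi)-1<\tl(\pi)=\tl(s(\sigma))$, Lemma \ref{Lem4} gives $\rmax(\sigma)=\zeil(\sigma)=c$; because $n,n-1,\ldots,n-c+1$ occur in decreasing order and each dominates everything to its right, the right-to-left maxima of $\sigma$ are exactly $\{n-c+1,\ldots,n\}$, and in particular $\sigma_n=n-c+1$. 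On the target side, $\pi$ ends with the increasing run $(n-c+1)(n-c+2)\cdots n$ (as $\tl(\pi)>c$) and $\tl(\pi^*)=\tl(\pi)-1\geq c$.

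The key identity to establish is $s(\sigma^*)=\pi^*$, so that $\sigma^*$ genuinely lies in a set $\mathcal Z_\bullet(\pi^*)$. Writing $\sigma=I(T)$ for $T\in\DPT^{(2)}$, I would prove the general fact that deleting the last entry in in-order amounts to deleting the rightmost node $v$ of $T$ (which has no right child) and promoting its left subtree; by \eqref{Eq6}, in postorder this simply removes the single entry $\sigma_n$ from $P(T)=s(\sigma)$. Thus $s(\sigma^*)$ is the normalization of $\pi$ with the value $\sigma_n=n-c+1$ deleted. Since $\pi$ ends in the run $(n-c+1)\cdots n$, deleting $n-c+1$ and normalizing gives the same permutation as deleting $n$ and normalizing, namely $\pi^*$; hence $s(\sigma^*)=\pi^*$.

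Next I would carry out the elementary bookkeeping for the remaining statistics. Removing the last entry deletes exactly the possible descent and peak at position $n-1$, so $\des(\sigma^*)=\des(\sigma)-[\sigma\in D_{n-1}]$ and $\peak(\sigma^*)=\peak(\sigma)-[\sigma\in D_{n-1}\setminus D_{n-2}]$, which produces precisely the three prescribed shifts of $(a,b)$. For $\zeil$ I would again invoke Lemma \ref{Lem4}: $\zeil(\sigma^*)=\min\{\rmax(\sigma^*),\tl(\pi^*)\}$ with $\tl(\pi^*)\geq c$, so it suffices to count $\rmax(\sigma^*)$. The $c-1$ large maxima $n,\ldots,n-c+2$ survive the deletion of $\sigma_n$; in Case 1 ($\sigma\notin D_{n-1}$) the entry $\sigma_{n-1}<\sigma_n$ becomes a new right-to-left maximum, forcing $\rmax(\sigma^*)\geq c$ and hence $\zeil(\sigma^*)\geq c$, while in Cases 2 and 3 one checks $\sigma_{n-1}=n-c+2$ (the maximum just below $\sigma_n$), so no new maximum appears and $\rmax(\sigma^*)=c-1$, giving $\zeil(\sigma^*)=c-1$. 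This places each piece of $\mathcal Z_c^{a,b}(\pi)$ into its asserted target.

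Finally, for bijectivity I would exhibit the inverse. Since $\sigma_n=n-c+1$ is the same for every $\sigma\in\mathcal Z_c(\pi)$, the map $\sigma\mapsto\sigma^*$ is injective, with candidate inverse $\psi$ sending $\tau$ to the permutation obtained by incrementing every entry of $\tau$ that is at least $n-c+1$ and then appending $n-c+1$. The hard part, and the main obstacle, is verifying that $\psi$ carries each target set back onto the corresponding piece of $\mathcal Z_c^{a,b}(\pi)$. The delicate point is that $s(\psi(\tau))=\pi$ is not automatic from $s(\tau)=\pi^*$ alone: it holds precisely because the $\zeil$-condition on $\tau$ (namely $\zeil(\tau)\geq c$ in Case 1 and $\zeil(\tau)=c-1$ in Cases 2 and 3) pins down the top of the right spine of $I^{-1}(\tau)$, which is exactly the data needed to reinsert a rightmost node so that the postorder regains the full run $(n-c+1)\cdots n$. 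Running the tree description backwards then yields $s(\psi(\tau))=\pi$, and one must still confirm that the descent/peak profile of $\psi(\tau)$ at positions $n-2$ and $n-1$ lands it in the right case; here the subtlety is that whether $\psi(\tau)$ has a peak at $n-1$ is governed by whether $\tau\in D_{n-2}$, so the bulk of the remaining work is checking that the interaction of $s(\tau)=\pi^*$, $\zeil(\tau)$, and the tail of $\pi^*$ forces this local structure to match the prescribed codomains exactly.
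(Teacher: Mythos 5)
Your forward analysis is correct and essentially reproduces the paper's: Lemma \ref{Lem4} forces $\rmax(\sigma)=\zeil(\sigma)=c$, hence $\sigma_n=n-c+1$; your tree-theoretic derivation of $s(\sigma^*)=\pi^*$ (delete the rightmost vertex of $I^{-1}(\sigma)$ and promote its left subtree) is a valid substitute for the paper's computation, which instead writes $\sigma=\mu^{(0)}\,n\,\mu^{(1)}(n-1)\cdots\mu^{(c-1)}(n-c+1)$ and applies the recursive definition of $s$; the descent/peak bookkeeping, the evaluation of $\zeil(\sigma^*)$ via $\rmax(\sigma^*)$ and Lemma \ref{Lem4}, injectivity, and your inverse map $\psi$ (increment all entries $\geq n-c+1$, append $n-c+1$) are all exactly what the paper uses.

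However, the step you defer as ``the bulk of the remaining work'' is a genuine gap, and in fact it cannot be closed: membership of $\tau$ in $D_{n-2}$ is \emph{not} forced by the data $s(\tau)=\pi^*$, $\zeil(\tau)$, $\des(\tau)$, $\peak(\tau)$ defining the prescribed codomains, and that membership is precisely what decides whether $\psi(\tau)$ acquires a peak at position $n-1$. Concretely, if $\tau\in\mathcal Z_{c-1}^{a-1,b}(\pi^*)\setminus D_{n-2}$, then its unique candidate preimage $\psi(\tau)$ satisfies $\peak(\psi(\tau))=b+1$, so $\psi(\tau)\in\mathcal Z_c^{a,b+1}(\pi)$ and $\tau$ is missed by the third map; symmetrically, elements of $\mathcal Z_{c-1}^{a-1,b-1}(\pi^*)\cap D_{n-2}$ are missed by the second map. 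The statement itself fails as written: for $\pi=1234$, $c=2$, $(a,b)=(1,0)$, the domain $\mathcal Z_2^{1,0}(1234)\cap D_3\cap D_2$ is empty (membership in $D_3\cap D_2$ forces at least two descents, while $a=1$), yet the codomain $\mathcal Z_1^{0,0}(123)=\{123\}$ is not; likewise for $\pi=12345$, $c=3$, $(a,b)=(2,2)$, the second map has domain $\{15243\}$ and image $\{1423\}$, but claimed codomain $\mathcal Z_2^{1,1}(1234)=\{1243,1423\}$. The correct codomains for the second and third bijections are $\mathcal Z_{c-1}^{a-1,b-1}(\pi^*)\setminus D_{n-2}$ and $\mathcal Z_{c-1}^{a-1,b}(\pi^*)\cap D_{n-2}$, respectively; with that repair, your plan (and the paper's) goes through, at the cost of a correspondingly strengthened induction hypothesis in Theorem \ref{Thm15}, which must then track descents at the final position. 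You should be aware that the paper's own proof shares this defect: its unrefined bijections $\mathcal Z_c(\pi)\setminus D_{n-1}\to\mathcal Z_{\geq c}(\pi^*)$ and $\mathcal Z_c(\pi)\cap D_{n-1}\to\mathcal Z_{c-1}(\pi^*)$, and the first refined bijection, are proved correctly, but for the second and third refined maps the ``conversely'' steps only identify the fibers over the codomains \emph{within} $\mathcal Z_c^{a,b}(\pi)$; surjectivity is never established, and as the examples above show, it is false. So your proposal is sound up to the point you flagged, but what you flagged is not a routine verification---it is the place where the lemma itself must be corrected.
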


\begin{proof}
The condition $1\leq c\leq\tl(\pi)-1$ forces $\tl(\pi)\geq 2$, so $\pi=\pi^*n$. Suppose $\sigma\in\mathcal Z_c(\pi)$. Since $\zeil(\sigma)=c$, we can write 
\[\sigma=\mu^{(0)}\,n\,\mu^{(1)}(n-1)\,\mu^{(2)}\cdots\mu^{(c-1)}(n-c+1)\,\mu^{(c)},\] where $n-c$ does not appear in the subpermutation $\mu^{(c)}$. By the definition of the stack-sorting map, we have \[\pi=s(\sigma)=s(\mu^{(0)})s(\mu^{(1)})\cdots s(\mu^{(c-1)})s(\mu^{(c)})(n-c+1)\cdots(n-1)n.\] Because $c<\tl(\pi)=\tl(s(\sigma))$, Lemma \ref{Lem4} tells us that $c=\rmax(\sigma)$. This means that $n,n-1,\ldots,n-c+1$ are the only right-to-left maxima of $\sigma$, so $\mu^{(c)}$ is empty. Thus, 
\begin{equation}\label{Eq19}
\sigma=\mu^{(0)}\,n\,\mu^{(1)}(n-1)\,\mu^{(2)}\cdots\mu^{(c-1)}(n-c+1),
\end{equation} and \[\pi^*=s(\mu^{(0)})s(\mu^{(1)})\cdots s(\mu^{(c-1)})(n-c+1)\cdots(n-1).\] Now, 
\begin{equation}\label{Eq20}
\sigma^*=\mu^{(0)}\,(n-1)\,\mu^{(1)}(n-2)\,\mu^{(2)}\cdots(n-c+1)\,\mu^{(c-1)},
\end{equation} so \[s(\sigma^*)=s(\mu^{(0)})s(\mu^{(1)})\cdots s(\mu^{(c-1)})(n-c+1)\cdots(n-1)=\pi^*.\] It is clear that $\zeil(\sigma^*)\geq c-1$, so $\sigma^*\in\mathcal Z_{\geq c-1}(\pi^*)$. 

We have seen that every element of $\mathcal Z_c(\pi)$ ends with the entry $n-c+1$, so the map $\mathcal Z_c(\pi)\to\mathcal Z_{\geq c-1}(\pi^*)$ given by $\sigma\mapsto\sigma^*$ is injective. To see that it is surjective, suppose $\lambda\in\mathcal Z_{\geq c-1}(\pi^*)$. Observe that $\lambda$ is of the form \[\widehat\mu^{(0)}\,(n-1)\,\widehat\mu^{(1)}(n-2)\,\widehat\mu^{(2)}\cdots(n-c+1)\,\widehat\mu^{(c-1)},\] where \[s(\widehat\mu^{(0)})s(\widehat\mu^{(1)})\cdots s(\widehat\mu^{(c-1)})(n-c+1)\cdots(n-1)=\pi^*.\] Letting \[\widehat\sigma=\widehat\mu^{(0)}\,n\,\widehat\mu^{(1)}(n-1)\,\widehat\mu^{(2)}\cdots\widehat\mu^{(c-1)}(n-c+1),\] we find that $\widehat\sigma\in\mathcal Z_c(\pi)$ and $\widehat\sigma^*=\lambda$. 

With $\sigma\in\mathcal Z_c(\pi)$ as in \eqref{Eq19}, we see that $n-1\in\Des(\sigma)$ if and only if $\mu^{(c-1)}$ is empty. According to \eqref{Eq20}, this occurs if and only if $\rmax(\sigma^*)=c-1$. Because $c-1<\tl(\pi)-1=\tl(\pi^*)=\tl(s(\sigma^*))$, Lemma \ref{Lem4} tells us that $\rmax(\sigma^*)=c-1$ if and only if $\zeil(\sigma^*)=c-1$. This shows that the map $\sigma\mapsto\sigma^*$ is a bijection from $\mathcal Z_c(\pi)\cap D_{n-1}$ to $\mathcal Z_{c-1}(\pi^*)$. 

For the rest of the proof, assume $\sigma\in\mathcal Z_c^{a,b}(\pi)$. If $\sigma\not\in D_{n-1}$, then $\des(\sigma^*)=\des(\sigma)=a$ and $\peak(\sigma^*)=\peak(\sigma)=b$. Furthermore, it follows from the preceding paragraph that $\zeil(\sigma^*)\geq c$. Hence, $\sigma^*\in\mathcal Z_{\geq c}^{a,b}(\pi^*)$. Conversely, if $\sigma^*\in\mathcal Z_{\geq c}^{a,b}(\pi^*)$, then $\sigma\not\in D_{n-1}$ because $\des(\pi)=\des(\pi^*)$. This completes the proof of the bijection $\mathcal Z_c^{a,b}(\pi)\setminus D_{n-1}\to\mathcal Z_{\geq c}^{a,b}(\pi^*)$. 

If $\sigma\in D_{n-1}\setminus D_{n-2}$ (meaning $n-1$ is a peak of $\sigma$), then we can easily check that $\des(\sigma^*)=\des(\sigma)-1=a-1$ and $\peak(\sigma^*)=\peak(\sigma)-1=b-1$. Furthermore, we know from above that $\zeil(\sigma^*)=c-1$. Hence, $\sigma^*\in\mathcal Z_{c-1}^{a-1,b-1}(\pi^*)$. Conversely, if $\sigma^*\in\mathcal Z_{c-1}^{a-1,b-1}(\pi^*)$, then $n-1$ must be a peak of $\sigma$ because $\peak(\sigma^*)<\peak(\sigma)$. This implies that $\sigma\in D_{n-1}\setminus D_{n-2}$. This completes the proof of the bijection $(\mathcal Z_c^{a,b}(\pi)\cap D_{n-1})\setminus D_{n-2}\to\mathcal Z_{c-1}^{a-1,b-1}(\pi^*)$. 

If $\sigma\in D_{n-1}\cap D_{n-2}$, then $\des(\sigma^*)=\des(\sigma)-1=a-1$ and $\peak(\sigma^*)=\peak(\sigma)=b$. Furthermore, we know from above that $\zeil(\sigma^*)=c-1$. Hence, $\sigma^*\in\mathcal Z_{c-1}^{a-1,b}(\pi^*)$. Conversely, if $\sigma^*\in\mathcal Z_{c-1}^{a-1,b}(\pi^*)$, then $n-1$ must be a descent of $\sigma$ and must not be a peak of $\sigma$. This implies that $\sigma\in D_{n-1}\cap D_{n-2}$. This completes the proof of the bijection $\mathcal Z_c^{a,b}(\pi)\cap D_{n-1}\cap D_{n-2}\to\mathcal Z_{c-1}^{a-1,b}(\pi^*)$.   
\end{proof}

\begin{theorem}\label{Thm15}
Let \[\mathcal B=\bigcup_{m\geq 0}\{\widetilde\chi_m(1),\widetilde\chi_m(21),\widetilde\chi_m(2431)\}\] be the set from Theorem \ref{Thm13}.
Let $\tau^{(1)},\tau^{(2)},\ldots$ be a (possibly empty) list of patterns taken from the set $\mathcal B$, and let $\tau'^{(i)}=\swl(\tau^{(i)})$ for all $i$. For every $n\geq 0$, the statistics $\des$, $\peak$, and $\zeil$ are jointly equidistributed on \[s^{-1}(\Av_n(132,3412,\tau^{(1)},\tau^{(2)},\ldots))\quad\text{and}\quad s^{-1}(\Av_n(312,1342,\tau'^{(1)},\tau'^{(2)},\ldots)).\]
\end{theorem}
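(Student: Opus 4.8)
The plan is to reduce the theorem to a statement about individual permutations and then run an induction on length that uses Lemma \ref{Lem3} to track $\zeil$ while importing the joint $(\des,\peak)$ information from the strong fertility Wilf equivalence of Theorem \ref{Thm13}. Write $A_n=\Av_n(132,3412,\tau^{(1)},\tau^{(2)},\ldots)$ and $A_n'=\Av_n(312,1342,\tau'^{(1)},\tau'^{(2)},\ldots)$. By Lemma \ref{Lem1} and the computation $\swl(A_n)=A_n'$ carried out in the proof of Theorem \ref{Thm13}, the map $\swl$ restricts to a bijection $A_n\to A_n'$ that preserves $\tl$. Since $s^{-1}(A_n)=\bigsqcup_{\pi\in A_n}s^{-1}(\pi)$ and likewise for $A_n'$, it suffices to prove that for every $\pi\in A_n$ the statistics $\des,\peak,\zeil$ are jointly equidistributed on $s^{-1}(\pi)$ and $s^{-1}(\swl(\pi))$; equivalently, $|\mathcal Z_c^{a,b}(\pi)|=|\mathcal Z_c^{a,b}(\swl(\pi))|$ for all $a,b,c$. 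I would prove this by induction on $n$, the cases $n\le 2$ being trivial because $\pi=\swl(\pi)$ there.

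Two ingredients feed the induction. First, the type-preserving bijection $\theta_\pi:\VHC(\pi)\to\VHC(\swl(\pi))$ from Proposition \ref{Prop4} (available since $\pi$ avoids $132$ and $3412$), combined with Theorems \ref{Thm9} and \ref{Thm10}, shows that the \emph{total} joint distribution of $(\des,\peak)$ agrees, i.e.\ $\sum_c|\mathcal Z_c^{a,b}(\pi)|=\sum_c|\mathcal Z_c^{a,b}(\swl(\pi))|$ for all $a,b$ (this is exactly Proposition \ref{Prop5} refined to a single $\pi$). Second, whenever $\tl(\pi)\ge 1$ we have $\pi=\pi^*\oplus 1$, and, as is clear from the description of $\swl$ as sliding southwest points leftward (Remark \ref{Rem7}), appending a trailing maximum commutes with $\swl$: that is, $\swl(\pi)^*=\swl(\pi^*)$. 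Since $\pi^*$ is a pattern of $\pi$, we also have $\pi^*\in A_{n-1}$, so the inductive hypothesis applies to $\pi^*$.

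For the inductive step with $\pi\in A_n$ and $n\ge 3$, set $t=\tl(\pi)=\tl(\swl(\pi))$. If $t\le 1$, then $\zeil$ is constant (equal to $1$, or vacuously) on $s^{-1}(\pi)$ and on $s^{-1}(\swl(\pi))$, because $\zeil(\sigma)=\min\{\rmax(\sigma),\tl(s(\sigma))\}\le t$ by Lemma \ref{Lem4}; the claim then follows from the first ingredient. If $t\ge 2$, then for each $c$ with $1\le c\le t-1$ Lemma \ref{Lem3} expresses $|\mathcal Z_c^{a,b}(\pi)|$ as $|\mathcal Z_{\ge c}^{a,b}(\pi^*)|+|\mathcal Z_{c-1}^{a-1,b-1}(\pi^*)|+|\mathcal Z_{c-1}^{a-1,b}(\pi^*)|$, and it gives the identical formula for $\swl(\pi)$ in terms of $\swl(\pi)^*$ (valid since $\tl(\swl(\pi))=t$). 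By the inductive hypothesis applied to $\pi^*$ together with the identity $\swl(\pi)^*=\swl(\pi^*)$, every $\mathcal Z$-count of $\pi^*$ matches the corresponding count of $\swl(\pi)^*$, so $|\mathcal Z_c^{a,b}(\pi)|=|\mathcal Z_c^{a,b}(\swl(\pi))|$ for all $a,b$ and all $1\le c\le t-1$. Finally, since $\zeil\le t$ forces $\sum_{c=1}^{t}|\mathcal Z_c^{a,b}(\pi)|$ to equal the total furnished by the first ingredient, subtracting the already-matched terms $c=1,\ldots,t-1$ yields equality for the remaining value $c=t$ as well, completing the induction.

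The main obstacle is precisely that $\zeil$ is not skeletal and is not preserved by $\theta_\pi$ — indeed $\rmax$ need not be preserved, as the discussion preceding the theorem shows with $s^{-1}(3124)$ and $s^{-1}(1324)$ — so the clean argument of Corollary \ref{Cor3} is unavailable. The recursion of Lemma \ref{Lem3} is what circumvents this: it controls $\zeil$ only in the rigid range $c<\tl(\pi)$, where Lemma \ref{Lem4} forces $\zeil=\rmax$ and pins down the tail of $\sigma$, while the elusive top value $c=\tl(\pi)$ is recovered only indirectly, by difference from the total $(\des,\peak)$-distribution supplied by the strong fertility Wilf equivalence. The one genuinely new verification needed is the compatibility $\swl(\pi)^*=\swl(\pi^*)$, which closes the recursion; it is routine but must be checked against the definition of $\swl$.
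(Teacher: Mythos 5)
Your proposal is correct and follows essentially the same route as the paper's proof: the same reduction to showing $|\mathcal Z_c^{a,b}(\pi)|=|\mathcal Z_c^{a,b}(\swl(\pi))|$ for each individual $\pi$, the same induction on $n$ using Lemma \ref{Lem3} together with the (routine but necessary) identity $\swl(\pi^*)=\swl(\pi)^*$ to handle $1\le c\le\tl(\pi)-1$, and the same recovery of the top value $c=\tl(\pi)$ by subtracting the already-matched counts from the joint $(\des,\peak)$ distribution supplied by Proposition \ref{Prop4} via Theorems \ref{Thm9} and \ref{Thm10}, with $c>\tl(\pi)$ vanishing by Lemma \ref{Lem4}. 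The only difference is organizational: you split off the case $\tl(\pi)\le 1$ at the start, whereas the paper cases directly on $c$ versus $\tl(\pi)$.
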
 

\begin{proof}
We saw in the proof of Theorem \ref{Thm13} that \[\swl(\Av_n(132,3412,\tau^{(1)},\tau^{(2)},\ldots))=\Av_n(312,1342,\tau'^{(1)},\tau'^{(2)},\ldots),\] so it suffices to prove that 
\begin{equation}\label{Eq9}
|\mathcal Z_c^{a,b}(\pi)|=|\mathcal Z_c^{a,b}(\swl(\pi))|
\end{equation} for all $a,b,c\in\mathbb Z$, and $\pi\in\Av_n(132,3412,\tau^{(1)},\tau^{(2)},\ldots)$. We do this by induction on $n$, noting that the result is trivial if $n\leq 2$ since $\pi=\swl(\pi)$ in that case. Assume $n\geq 3$, and fix $a,b,c\in\mathbb Z$ and $\pi\in\Av_n(132,3412,\tau^{(1)},\tau^{(2)},\ldots)$. We may assume $a,b\geq 0$ and $c\geq 1$ since both sides of \eqref{Eq9} are $0$ otherwise. To ease notation, let $\pi'=\swl(\pi)$.  

First, assume $c\leq\tl(\pi)-1$. We know by Lemma \ref{Lem1} that $\tl(\pi)=\tl(\pi')$, so $c\leq\tl(\pi')-1$. Lemma~\ref{Lem3} tells us that \[|\mathcal Z_c^{a,b}(\pi)|=|\mathcal Z_c^{a,b}(\pi)\setminus D_{n-1}|+|(\mathcal Z_c^{a,b}(\pi)\cap D_{n-1})\setminus D_{n-2}|+|\mathcal Z_c^{a,b}(\pi)\cap D_{n-1}\cap D_{n-2}|\] 
\begin{equation}\label{Eq23}
=|\mathcal Z_{\geq c}^{a,b}(\pi^*)|+|\mathcal Z_{c-1}^{a-1,b-1}(\pi^*)|+|\mathcal Z_{c-1}^{a-1,b}(\pi^*)|.
\end{equation}
Similarly, we can replace $\pi$ with $\pi'$ in Lemma \ref{Lem3} to obtain 
\begin{equation}\label{Eq24}
|\mathcal Z_c^{a,b}(\pi')|=|\mathcal Z_{\geq c}^{a,b}((\pi')^*)|+|\mathcal Z_{c-1}^{a-1,b-1}((\pi')^*)|+|\mathcal Z_{c-1}^{a-1,b}((\pi')^*)|.
\end{equation}
By induction on $n$, we know that $|\mathcal Z_{c'}^{a',b'}(\pi^*)|=|\mathcal Z_{c'}^{a',b'}(\swl(\pi^*))|$ for all $a',b',c'\in\mathbb Z$.  Hence,
\[|\mathcal Z_{\geq c}^{a,b}(\pi^*)|=|\mathcal Z_{\geq c}^{a,b}(\swl(\pi^*))|,
\quad |\mathcal Z_{c-1}^{a-1,b-1}(\pi^*)|=|\mathcal Z_{c-1}^{a-1,b-1}(\swl(\pi^*))|,\]
\begin{equation}\label{Eq25}
\text{and}\quad |\mathcal Z_{c-1}^{a-1,b}(\pi^*)|=|\mathcal Z_{c-1}^{a-1,b}(\swl(\pi^*))|.
\end{equation}
Because $\tl(\pi)\geq c+1\geq 2$, we know that $\pi=\pi^*n$. This implies that $\swl(\pi^*)=(\pi')^*$, so we can combine \eqref{Eq23}, \eqref{Eq24}, and \eqref{Eq25} to find that $|\mathcal Z_c^{a,b}(\pi)|=|\mathcal Z_c^{a,b}(\pi')|$, as desired. 

We now assume $c\geq\tl(\pi)$. Lemma \ref{Lem4} tells us that $\zeil(\sigma)\leq\tl(\pi)$ and $\zeil(\sigma')\leq\tl(\pi')=\tl(\pi)$ for all $\sigma\in s^{-1}(\pi)$ and $\sigma'\in s^{-1}(\pi')$. Consequently, 
\begin{equation}\label{Eq28}
\mathcal Z_i(\pi)=\mathcal Z_i(\pi')=\emptyset\quad\text{for all}\quad i>\tl(\pi).
\end{equation} This shows that $|\mathcal Z_c^{a,b}(\pi)|=|\mathcal Z_c^{a,b}(\pi')|=0$ if $c>\tl(\pi)$. We are left to handle the case $c=\tl(\pi)$. Using \eqref{Eq28} and the fact that $\tl(\pi)=\tl(\pi')$, we deduce that \[|\mathcal Z_{\geq 1}^{a,b}(\pi)|=|\mathcal Z_{\tl(\pi)}^{a,b}(\pi)|+\sum_{i=1}^{\tl(\pi)-1}|\mathcal Z_i^{a,b}(\pi)|\quad \text{and}\quad|\mathcal Z_{\geq 1}^{a,b}(\pi')|=|\mathcal Z_{\tl(\pi)}^{a,b}(\pi')|+\sum_{i=1}^{\tl(\pi)-1}|\mathcal Z_i^{a,b}(\pi')|.\] 
We saw above that $|\mathcal Z_i^{a,b}(\pi)|=|\mathcal Z_i^{a,b}(\pi')|$ for all $i\in\{1,\ldots,\tl(\pi)-1\}$. Therefore, in order to prove that $|\mathcal Z_{\tl(\pi)}^{a,b}(\pi)|=|\mathcal Z_{\tl(\pi)}^{a,b}(\pi')|$, it suffices to prove that $|\mathcal Z_{\geq 1}^{a,b}(\pi)|=|\mathcal Z_{\geq 1}^{a,b}(\pi')|$. At this point, we use Proposition \ref{Prop4} to see that there is a type-preserving bijection $\theta_\pi:\VHC(\pi)\to\VHC(\pi')$. Mimicking the proof of Proposition \ref{Prop5}, we find that 
\begin{equation}\label{Eq29}
\sum_{\sigma\in s^{-1}(\pi)}x^{\des(\sigma)+1}y^{\peak(\sigma)+1}=\sum_{\sigma'\in s^{-1}(\pi')}x^{\des(\sigma')+1}y^{\peak(\sigma')+1}.
\end{equation} The coefficient of $x^{a+1}y^{b+1}$ in the polynomial on the left-hand side of \eqref{Eq29} is $|\mathcal Z_{\geq 1}^{a,b}(\pi)|$. The coefficient of $x^{a+1}y^{b+1}$ in the polynomial on the right-hand side of \eqref{Eq29} is $|\mathcal Z_{\geq 1}^{a,b}(\pi')|$. 
\end{proof}

\begin{remark}
In the specific case in which the sequence $\tau^{(1)},\tau^{(2)},\ldots$ consists of the single pattern $\widetilde\chi_2(1)=231$, Theorem \ref{Thm15} generalizes Corollary \ref{Cor2}. \hspace*{\fill}$\lozenge$  
\end{remark}

\section{Conclusion and Ideas for Future Work}\label{Sec:Conclusion}

When it comes to studying the stack-sorting map, valid hook configurations provide a unified framework for reproving and generalizing many known results and for discovering and proving completely new results. This theme is supported by all of the articles that make use of valid hook configurations \cite{DefantCatalan, DefantCounting, DefantDescents, DefantFertility, DefantPostorder, DefantPreimages, DefantClass, DefantEngenMiller, DefantKravitz}, and the current article is no exception. 

Consider the chain of equalities \[\sum_{n\geq 1}|s^{-1}(\Av_n(132,312))|x^n=\sum_{n\geq 1}|s^{-1}(\Av_n(231,312))|x^n=\sum_{n\geq 1}|s^{-1}(\Av_n(132,231))|x^n\] 
\begin{equation}\label{Eq1}
=\frac{1-2x-\sqrt{1-4x-4x^2}}{4x}.
\end{equation} The first of these equalities was proven in \cite{DefantClass}, and the other two equalities were conjectured there. The third was proven more recently in \cite{DefantEnumeration}. We completed this chain by proving the second equality as a consequence of Corollary \ref{Cor2} above. It is worth mentioning that the sequence with generating function $\displaystyle \frac{1-2x-\sqrt{1-4x-4x^2}}{4x}$ has appeared in other contexts. The reader can find more information about this sequence in \cite{Hossain}, where its terms were named ``Boolean-Catalan numbers." 

We can enlarge the sets counted in \eqref{Eq1} in order to obtain another chain of equalities. More precisely, it follows from Theorems \ref{Thm12} and \ref{Thm13} that \[\sum_{n\geq 1}|s^{-1}(\Av_n(132,3412))|x^n=\sum_{n\geq 1}|s^{-1}(\Av_n(231,1423))|x^n=\sum_{n\geq 1}|s^{-1}(\Av_n(312,1342))|x^n.\] 
This leads naturally to the following problem. 
\begin{problem}\label{Prob1}
Enumerate $s^{-1}(\Av(132,3412))$. 
\end{problem}

Throughout this article, we have introduced four variants of Wilf equivalence: fertility Wilf equivalence (Definition \ref{Def3}), postorder Wilf equivalence (Definition \ref{Def4}), binary postorder Wilf equivalence (Remark \ref{Rem4}), and strong fertility Wilf equivalence (Definition \ref{Def10}). We have seen that all of these properties imply fertility Wilf equivalence. In the discussion immediately following Remark \ref{Rem5}, we saw that there are infinitely many permutation classes that are strongly fertility Wilf equivalent but not binary postorder Wilf equivalent (hence, not postorder Wilf equivalent). This shows that binary postorder Wilf equivalence and postorder Wilf equivalence are strictly stronger than fertility Wilf equivalence. We now show that strong fertility Wilf equivalence is also strictly stronger than fertility Wilf equivalence. 

Given permutations $\lambda^{(1)},\ldots,\lambda^{(r)}$, let $\mathscr C(\lambda^{(1)},\ldots,\lambda^{(r)})$ be the set of all normalized permutations that are contained in at least one of the permutations $\lambda^{(1)},\ldots,\lambda^{(r)}$. This is a permutation class because it is equal to $\Av(\tau^{(1)},\tau^{(2)},\ldots)$, where $\tau^{(1)},\tau^{(2)},\ldots$ is a list of all of the patterns that are not in $\mathscr C(\lambda^{(1)},\ldots,\lambda^{(r)})$. Let $\mathscr D=\mathscr C(24135)$ and $\mathscr D'=\mathscr C(32415,31425,21435,42135)$. It is straightforward to check that 
\begin{equation}\label{Eq30}
(|s^{-1}(\mathscr D\cap S_n)|)_{n\geq 1}=(|s^{-1}(\mathscr D'\cap S_n)|)_{n\geq 1}=1,2,6,10,4,0,0,0,0,\ldots.
\end{equation} This shows that $\mathscr D$ and $\mathscr D'$ are fertility Wilf equivalent. However, \[|\VHC(\mathscr D\cap S_5)|=1<4=|\VHC(\mathscr D'\cap S_5)|.\] By Remark \ref{Rem6}, this forbids the existence of a type-preserving bijection $\VHC(\mathscr D)\to\VHC(\mathscr D')$. Therefore, $\mathscr D$ and $\mathscr D'$ are not strongly fertility Wilf equivalent.  

We also know that postorder Wilf equivalence implies binary postorder Wilf equivalence. As promised in Remark \ref{Rem4}, we now give a simple example showing that binary postorder Wilf equivalence does not imply postorder Wilf equivalence. It is not difficult to show that \[P^{-1}(\Av(123))\cap\DPT^{(2)}=\{I^{-1}(\epsilon),I^{-1}(1),I^{-1}(12),I^{-1}(21),I^{-1}(231)\}\] \[=P^{-1}(\Av(123,3214))\cap\DPT^{(2)},\] where $\epsilon$ is the empty permutation. Therefore, $\Av(123)$ and $\Av(123,3214)$ are binary postorder Wilf equivalent. There exists a decreasing ternary plane tree (ternary means that every vertex has exactly $3$ (possibly empty) subtrees) with postorder $3214$; this tree is an element of $P^{-1}(\Av(123))$ with $4$ vertices. However, one can check that there are no trees with $4$ vertices in $P^{-1}(\Av(123,3214))$. Hence, $\Av(123)$ and $\Av(123,3214)$ are not postorder Wilf equivalent.   

By the discussion following Remark \ref{Rem5}, strong fertility Wilf equivalence does not imply binary postorder Wilf equivalence or postorder Wilf equivalence. However, we do not know about the reverse implications. 

\begin{question}\label{Quest1}
Does there exist a pair of permutation classes that are binary postorder Wilf equivalent but are not strongly fertility Wilf equivalent?
\end{question} 

\begin{question}\label{Quest2}
Does there exist a pair of permutation classes that are postorder Wilf equivalent but are not strongly fertility Wilf equivalent?
\end{question} 

\section{Acknowledgments}
The author was supported by a Fannie and John Hertz Foundation Fellowship and an NSF Graduate Research Fellowship. He thanks the anonymous referees for helpful comments that improved the presentation of this article.


\begin{thebibliography}{1}

\bibitem{Bloom}
J. Bloom and A. Burstein, Egge triples and unbalanced Wilf-equivalence. \emph{Australas. J. Combin.} {\bf 64} (2016), 232--251. 

\bibitem{Bona}
M. B\'ona, Combinatorics of permutations. CRC Press, 2012. 

\bibitem{BonaSimplicial}
M. B\'ona, A simplicial complex of 2-stack sortable permutations. \emph{Adv. Appl. Math.} {\bf 29} (2002), 499--508.

\bibitem{BonaWords}
M. B\'ona, Stack words and a bound for $3$-stack sortable permutations. arXiv:1903.04113.

\bibitem{BonaSurvey}
M. B\'ona, A survey of stack-sorting disciplines. \emph{Electron. J. Combin.} {\bf 9.2} (2003): 16.

\bibitem{BonaSymmetry}
M. B\'ona, Symmetry and unimodality in $t$-stack sortable permutations. \emph{J. Combin. Theory Ser.
A} {\bf 98.1} (2002), 201–-209.

\bibitem{Bousquet98}
M. Bousquet-M\'elou, Multi-statistic enumeration of two-stack sortable permutations. \emph{Electron. J. Combin.}, {\bf 5} (1998), \#R21.

\bibitem{Bousquet}
M. Bousquet-M\'elou, Sorted and/or sortable permutations. \emph{Discrete Math.} {\bf 225} (2000), 25--50.

\bibitem{Bouvel}
M. Bouvel and O. Guibert, Refined enumeration of permutations sorted with two stacks and a $D_8$-symmetry. \emph{Ann. Comb.} {\bf 18} (2014), 199--232. 

\bibitem{BrandenActions}
P. Br\"and\'en, Actions on permutations and unimodality of descent polynomials. \emph{European J. Combin.} {\bf 29} (2008), 514--531. 

\bibitem{Branden3}
P. Br\"and\'en, On linear transformations preserving the P\'olya frequency property. \emph{Trans. Amer. Math. Soc.} {\bf 358} (2006), 3697--3716.

\bibitem{Burstein}
A. Burstein and J. Pantone, Two examples of unbalanced Wilf-equivalence. \emph{J.
Combin.} {\bf 6} (2015), 55--67.

\bibitem{Claesson}
A. Claesson and H. \'Ulfarsson, Sorting and preimages of pattern classes, arXiv:1203.2437. 

\bibitem{Claesson2}
A. Claesson and S. Kitaev, Classification of bijections between $321$- and $132$-avoiding permutations. \emph{S\'em.
Lothar. Combin.} {\bf 60} (2008), Article B60d. 

\bibitem{Cori}
R. Cori, B. Jacquard and G. Schaeffer, Description trees for some families of planar maps, \emph{Proceedings of the 9th FPSAC}, (1997). 

\bibitem{DefantCatalan}
C. Defant, Catalan intervals and uniquely sorted permutations. arXiv:1904.02627.

\bibitem{DefantCounting}
C. Defant, Counting $3$-stack-sortable permutations. Available at arXiv:1903.09138. To appear in \emph{J. Combin. Theory Ser. A}. 

\bibitem{DefantDescents}
C. Defant, Descents in $t$-sorted permutations. Available at arXiv:1904.02613. To appear in \emph{J. Comb.}

\bibitem{DefantEnumeration}
C. Defant, Enumeration of stack-sorting preimages via a decomposition lemma. arXiv:1904.02829.

\bibitem{DefantFertility}
C. Defant, Fertility numbers. Available at arXiv:1809.04421. To appear in \emph{J. Comb.}

\bibitem{DefantMotzkin} 
C. Defant, Motzkin paths and valid hook configurations. arXiv:1904.10451.

\bibitem{DefantPolyurethane}
C. Defant, Polyurethane toggles. arXiv:1904.06283. 

\bibitem{DefantPostorder}
C. Defant, Postorder preimages. \emph{Discrete Math. Theor. Comput. Sci.} {\bf 19}; 1 (2017). 

\bibitem{DefantPreimages}
C. Defant, Preimages under the stack-sorting algorithm. \emph{Graphs Combin.} {\bf 33} (2017), 103--122. 

\bibitem{DefantClass}
C. Defant, Stack-sorting preimages of permutation classes. arXiv:1809.03123. 

\bibitem{DefantEngenMiller}
C. Defant, M. Engen and J. A. Miller, Stack-sorting, set partitions, and Lassalle's sequence. arXiv:1809.01340.  

\bibitem{DefantKravitz}
C. Defant and N. Kravitz, Stack-sorting for words. arXiv:1809.09158.  

\bibitem{Dulucq}
S. Dulucq, S. Gire and O. Guibert, A combinatorial proof of J. West's conjecture. \emph{Discrete Math.} {\bf 187} (1998), 71--96.

\bibitem{Dulucq2}
S. Dulucq, S. Gire and J. West, Permutations with forbidden subsequences and nonseparable planar maps. \emph{Discrete Math.} {\bf 153.1} (1996), 85--103.

\bibitem{Fang}
W. Fang, Fighting fish and two-stack-sortable permutations. \emph{S\'em. Lothar.
Combin.} {\bf 80B} (2018), Art. \#7.

\bibitem{Goulden}
I. Goulden and J. West, Raney paths and a combinatorial relationship between rooted nonseparable planar maps and two-stack-sortable permutations. \emph{J. Combin. Theory Ser. A.} {\bf 75.2} (1996), 220--242.

\bibitem{Hossain}
C. Hossain, Quotients derived from posets in algebraic and topological combinatorics. Ph.D. thesis, N.C. State, in Preparation.  

\bibitem{Kitaev}
S. Kitaev, Patterns in Permutations and Words. Monographs in Theoretical Computer
Science. Springer, Heidelberg, 2011.

\bibitem{Knuth}
D. E. Knuth, The Art of Computer Programming, volume 1, Fundamental Algorithms.
Addison-Wesley, Reading, Massachusetts, 1973.

\bibitem{Linton}
S. Linton, N. Ru\v{s}kuc and V. Vatter, Permutation Patterns, London Mathematical Society Lecture Note Series Vol. 376. Cambridge University Press,
2010.

\bibitem{Hanna}
H. Mularczyk, Lattice paths and pattern-avoiding uniquely sorted permutations. arXiv:1908.04025.

\bibitem{Maya}
M. Sankar, Further bijections to pattern-avoiding valid hook configurations. arXiv:1910.08895. 

\bibitem{Stankova}
Z. Stankova and J. West, A new class of Wilf-equivalent permutations. \emph{J. Algebraic Combin.} {\bf 15} (2002), 271--290.

\bibitem{Stanley}
R. P. Stanley, Enumerative combinatorics, Vol. 1, Second Edition. Cambridge University Press, 2012. 

\bibitem{Ulfarsson}
H. \'Ulfarsson, Describing West-$3$-stack-sortable permutations with permutation patterns. \emph{S\'em. Lothar. Combin.} {\bf 67} (2012).

\bibitem{West}
J. West, Permutations with restricted subsequences and stack-sortable permutations. Ph.D. Thesis, MIT, 1990.

\bibitem{Zeilberger}
D. Zeilberger, A proof of Julian West's conjecture that the number of two-stack-sortable permutations of length
$n$ is $2(3n)!/((n + 1)!(2n + 1)!)$. \emph{Discrete Math.} {\bf 102} (1992), 85--93.  
\end{thebibliography}
\end{document}